\title{An Ensemble Kalman Filter Implementation Based on Modified Cholesky Decomposition for Inverse Covariance Matrix Estimation}
\author{
        Elias D. Nino \\
        Computational Science Laboratory\\
        Department of Computer Science\\
        Virginia Tech \\
        Blacksburg, VA 24060, \underline{USA}, E-mail: {enino@vt.edu}
            \and
        Adrian Sandu \\
        Computational Science Laboratory\\
        Department of Computer Science\\
        Virginia Tech \\
        Blacksburg, VA 24060, \underline{USA}, E-mail: {asandu7@vt.edu}
         \and
        Xinwei Deng \\
        Department of Statistics \\
        Virginia Tech \\
        Blacksburg, VA 24060, \underline{USA}, E-mail: {xdeng@vt.edu}
}
\date{\today}
\newtheorem{theorem}{Theorem}
\newtheorem{lemma}[theorem]{Lemma}
\newtheorem{definition}[theorem]{Definition}
\newcommand{\classU}{\mathcal{U}}
\newcommand{\CT}{{\bf C}}
\newcommand{\CE}{\widehat{\bf C}}
\newcommand{\MV}{{\bf M}}
\renewcommand{\AE}{\widehat{\bf A}}
\newcommand{\err}{\boldsymbol{\varepsilon}}
\newcommand{\WV}{{\bf W}}
\newcommand{\h}{{\bf h}}
\newcommand{\BE}{\widehat{\bf B}}
\newcommand{\DE}{\widehat{\bf D}}
\newcommand{\TE}{\widehat{\bf T}}
\newcommand{\be}{{\boldsymbol \beta}}
\newcommand{\Eig}{\boldsymbol{\Sigma}}
\newcommand{\de}{\widehat{d}}
\newtheorem{comment}{Comment}
\newcommand{\pre}{p_i}
\newcommand{\errR}{\boldsymbol{\theta}}
\newcommand{\BESTT}{\widehat{\bf T}}
\newcommand{\BESTD}{\widehat{\bf D}}
\newcommand{\CTc}{\widehat{\bf c}}
\newcommand{\CTt}{{\bf c}}
\newcommand{\evar}{\widehat{\bf cov}}
\newcommand{\var}{{\bf cov}}
\newcommand{\diag}{{\bf diag}}
\newcommand{\ra}{{\zeta}}
\newcommand{\zeros}{{\bf 0}}
\newcommand{\inno}{\boldsymbol{\Delta}}
\newcommand{\ub}{{\bf u}}
\newcommand{\vv}{{\bf v}}
\newcommand{\T}{{\bf T}}
\newcommand{\QEST}{{\bf \widehat{\bf Q}}}
\newcommand{\s}{{\bf s}}
\newcommand{\Nens}{{\textnormal{N}_\textnormal{ens}}} 
\newcommand{\Nobs}{m} 
\newcommand{\Nstate}{n} 
\newcommand{\X}{{\bf X}} 
\newcommand{\x}{{\bf x}} 
\newcommand{\lp}{\left (} 
\newcommand{\rp}{\right )} 
\newcommand{\lb}{\left [} 
\newcommand{\rb}{\right ]} 
\renewcommand{\ln}{\big\|} 
\newcommand{\rn}{\big\|}
\newcommand{\lab}{\left |}
\newcommand{\rab}{\right |}
\newcommand{\B}{{\bf B}} 
\newcommand{\R}{{\bf R}} 
\newcommand{\N}{M} 
\newcommand{\y}{{\bf y}} 
\renewcommand{\H}{{\bf H}} 
\newcommand{\xm}{{\overline{\bf x}}} 
\newcommand{\W}{{\boldsymbol \alpha}} 
\newcommand{\I}{{\bf I}} 
\newcommand{\M}{\mathcal{M}} 
\newcommand{\Nor}{\mathcal{N}} 
\newcommand{\Y}{{\bf Y}} 
\newcommand{\Ho}{{\mathcal{H}}} 
\newcommand{\lle}{\left \{ } 
\newcommand{\rle}{\right \}} 
\newcommand{\DX}{{{\boldsymbol \delta} {\bf X}}}
\newcommand{\Q}{{\bf Q}} 
\newcommand{\BO}[1]{\mathcal{O}\lp #1\rp} 
\renewcommand{\P}{{\bf P}} 
\renewcommand{\Re}{\mathbbm{R}}
\renewcommand{\S}{{\bf S}} 
\newcommand{\errobs}{{\boldsymbol \epsilon}} 
\newcommand{\errbac}{{\boldsymbol \nu}} 
\newcommand{\D}{{\bf D}}
\newcommand{\Z}{{\bf Z}} 
\newcolumntype{N}{>{\centering\arraybackslash} m{0.30\textwidth} }
\newcolumntype{V}{>{\centering\arraybackslash} m{0.02\textwidth} }
\newcommand{\zero}{{\bf 0}}
\newcommand{\BEST}{\widehat{\bf B}}
\newcommand{\U}{{\bf U}}
\newcommand{\V}{{\bf V}}
\newcommand{\A}{{\bf A}}
\newcommand{\ones}{{\bf 1}}
\begin{document}

\thispagestyle{empty}
\setcounter{page}{0}

\begin{Huge}
\begin{center}
{\bf Computer Science Technical Report CSTR-2/2016 }\\
\today
\end{center}
\end{Huge}
\vfil
\begin{huge}
\begin{center}
Elias D. Ni\~no, Adrian Sandu and Xinwei Deng
\end{center}
\end{huge}

\vfil
\begin{huge}
\begin{it}
\begin{center}
An Ensemble Kalman Filter Implementation Based on Modified Cholesky Decomposition for Inverse Covariance Matrix Estimation
\end{center}
\end{it}
\end{huge}
\vfil

\begin{large}
\begin{center}
Computational Science Laboratory \\
Computer Science Department \\
Virginia Polytechnic Institute and State University \\
Blacksburg, VA 24060 \\
Phone: (540)-231-2193 \\
Fax: (540)-231-6075 \\ 
Email: \url{sandu@cs.vt.edu} \\
Web: \url{http://csl.cs.vt.edu}
\end{center}
\end{large}

\vspace*{1cm}

\begin{tabular}{ccc}
\includegraphics[width=2.5in]{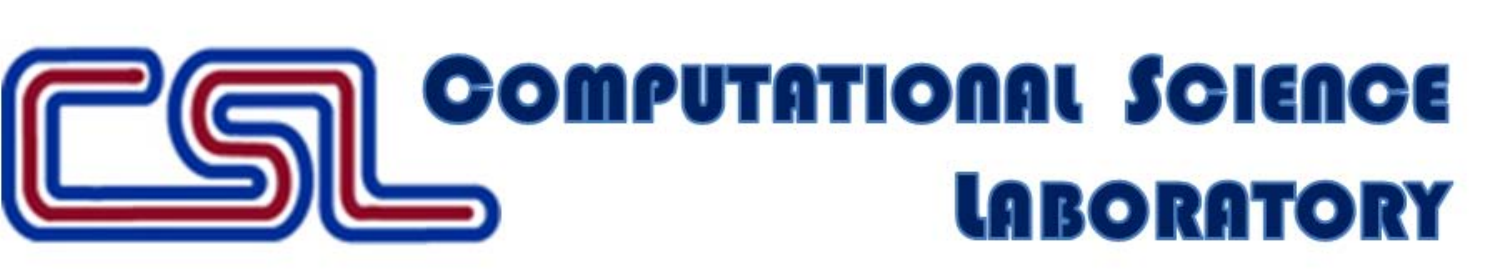}
&\hspace{2.5in}&
\includegraphics[width=2.5in]{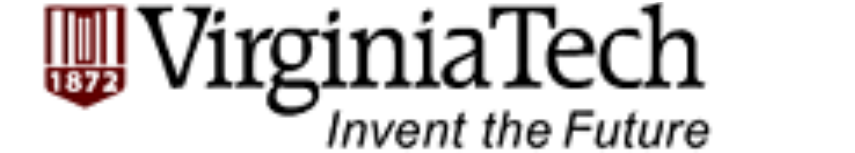} \\
{\bf\em Innovative Computational Solutions} &&\\
\end{tabular}

\newpage

\maketitle

\begin{abstract}

This paper develops an efficient implementation of the ensemble Kalman filter based on a modified Cholesky decomposition for inverse covariance matrix estimation. This implementation is named EnKF-MC. Background errors corresponding to distant model components with respect to some radius of influence are assumed to be conditionally independent. This allows to obtain sparse estimators of the inverse background error covariance matrix. The computational effort of the proposed method is discussed and different formulations based on various matrix identities are provided. Furthermore, an asymptotic proof of convergence with regard to the ensemble size is presented. In order to assess the performance and the accuracy of the proposed method, experiments are performed making use of the Atmospheric General Circulation Model SPEEDY. The results are compared against those obtained using the local ensemble transform Kalman filter (LETKF). Tests are performed for dense observations ($100\%$ and $50\%$ of the model components are observed) as well as for sparse observations (only $12\%$, $6\%$, and $4\%$ of model components are observed). The results reveal that the use of modified Cholesky for inverse covariance matrix estimation can reduce the impact of spurious correlations during the assimilation cycle, i.e., the results of the proposed method are of better quality than those obtained via the LETKF in terms of root mean square error. 
\end{abstract}
{\small {\bf Keywords:} {\it Modified Cholesky decomposition, background error covariance estimation, spurious correlations, ensemble Kalman filter.}}

\section{Introduction}
\label{sec:introduction}

The goal of sequential data assimilation is to estimate the true state of a dynamical system $\x^\textnormal{true} \in \Re^{\Nstate \times 1}$ using information from numerical models, priors, and observations. A numerical model captures  (with some approximation) the physical laws of the system and evolves its state forward in time \cite{Cheng2010}:
\begin{eqnarray}
\label{eq:intro-numerical-model}
\x_{k} = \M_{t_{k-1} \rightarrow t_k} \lp \x_{k-1} \rp \in \Re^{\Nstate \times 1},\, \text{ for $\x \in \Re^{\Nstate \times 1}$} ,\,
\end{eqnarray}
where $\Nstate$ is the dimension of the model state, $k$ denotes time index, and $\M$ can represent, for example, the dynamics of the ocean and/or atmosphere. A prior estimation $\x_k^\textnormal{b} \in \Re^{\Nstate \times 1}$ of $\x_k^\textnormal{true}$ is available, and the prior error $\errbac$ is usually assumed to be normally distributed:
\begin{eqnarray}
\label{eq:background-errors}
\displaystyle
\x_k^\textnormal{b} - \x^\textnormal{true} = \errbac_k \sim \Nor \lp \zero,\, \B_k \rp \in \Re^{\Nstate \times 1},
\end{eqnarray}
where $\B_k \in \Re^{\Nstate \times \Nstate}$ is the background error covariance matrix. Noisy observations (measurements) of the true state $\y_k \in \Re^{\Nobs \times 1}$ are taken, and the observation errors $\errobs$ are usually assumed to be normally distributed:
\begin{eqnarray}
\label{eq:noisy-observations}
\displaystyle
\y_k -\Ho \lp \x_k^\textnormal{true} \rp  = \errobs_k  \sim \Nor \lp \zero,\, \R_k \rp\, \in \Re^{\Nobs \times 1} ,\,
\end{eqnarray}
where $\Nobs$ is the number of observed components, $\Ho: \Re^{\Nstate\times 1} \rightarrow \Re^{\Nobs \times 1}$ is the observation operator, and $\R_k \in \Re^{\Nobs \times \Nobs}$ is the data error covariance matrix.  

Making use of Bayesian statistics and matrix identities, the assimilation of the observation \eqref{eq:noisy-observations} is performed as follows:

\begin{equation}
\label{eq:analysis-state}
\begin{split}
\x_k^\textnormal{a} &= \x_k^\textnormal{b} + \B_k \cdot \H_k^T \cdot \lb \H_k \cdot {\B_k} \cdot \H_k^T +\R_k \rb^{-1} \cdot \lb  \y_k - \Ho(\x_k^\textnormal{b}) \rb \in \Re^{\Nstate \times 1} ,\,\\
\A_k &= \lb \I - \B_k \cdot \H_k^T \cdot \lb \R_k+\H_k \cdot \B_k \cdot \H_k^T \rb^{-1} \cdot \H_k \rb \cdot \B_k \in \Re^{\Nstate \times \Nstate},\,
\end{split}
\end{equation}
where $ \H_k \approx \Ho'(\x_k^\textnormal{b})  \in \Re^{\Nobs \times \Nstate}$ is a linear approximation of the observational operator, and $\x_k^\textnormal{a} \in \Re^{\Nstate \times 1}$ is the analysis (posterior) state. 

According to equation \eqref{eq:analysis-state} the elements of $\B_k$ determine how the information about the observed model components contained in the innovations $ \y_k - \Ho(\x_k^\textnormal{b}) \in \Re^{\Nobs \times 1}$ is distributed to  properly adjust all model components, including the unobserved ones. Thus, the successful assimilation of the observation \eqref{eq:noisy-observations} will rely, in part, on how well the background error statistics are approximated. 

In the context of ensemble based methods, an ensemble of model realizations
\begin{eqnarray}
\label{eq:background-ensemble}
\X^\textnormal{b}_k = \lb \x^{\textnormal{b}[1]}_k,\,\x^{\textnormal{b}[2]}_k,\,\ldots,\, \x^{\textnormal{b}[\Nens]}_k\rb \in \Re^{\Nstate \times \Nens} ,\,
\end{eqnarray}
is used in order to estimate the unknown moments of the background error distribution:
\begin{subequations}
\label{eq:moments-ensemble}
\begin{eqnarray}
\label{eq:ensemble-mean}
\displaystyle
\xm^\textnormal{b}_k &=& \frac{1}{\Nens} \cdot \sum_{i=1}^{\Nens} \x_k^{\textnormal{b}[i]} \in \Re^{\Nstate \times 1} ,\\
\label{eq:covariance-ensemble}
\displaystyle
\B_k \approx \P^\textnormal{b} &=& \frac{1}{\Nens-1} \cdot \U_k^\textnormal{b}  \cdot \left(\U_k^\textnormal{b}\right)^T \in \Re^{\Nstate \times \Nstate} ,\,
\end{eqnarray}
where $\Nens$ is the number of ensemble members, $\x_k^{\textnormal{b}[i]} \in \Re^{\Nstate \times 1}$ is the $i$-th ensemble member, $\xm_k^\textnormal{b} \in \Re^{\Nstate \times 1}$ is the background ensemble mean, $\P_k^\textnormal{b}$ is the background ensemble covariance matrix,  and $\U_k \in \Re^{\Nstate \times \Nens}$ is the matrix of member deviations:
\begin{eqnarray}
\label{eq:matrix-of-member-deviations}
\displaystyle
\U_k^\textnormal{b} = \X_k^\textnormal{b} - \xm_k^\textnormal{b} \cdot \ones_{\Nens}^T \in \Re^{\Nstate \times \Nens}.
\end{eqnarray}
\end{subequations}
One attractive feature of $\P_k^\textnormal{b}$ is its flow-dependency which allows to approximate the background error correlations based on the dynamics of the numerical model \eqref{eq:intro-numerical-model}. However, in operational data assimilation, the number of model components is much larger than the number of model realizations $\Nstate \gg \Nens$ and therefore $\P_k^\textnormal{b}$ is rank-deficient. Spurious correlations (e.g., correlations between distant model components in space) can degenerate the quality of the analysis corrections. One of the most succesful EnKF formulations is the local ensemble transform Kalman filter (LETKF) in which the impact of spurious analysis corrections is avoided by making use of local domain analyses. In this context, every model component is surrounded by a box of a prescribed radius, and then the assimilation is performed within every local box. In this case the background error correlations are provided by the local ensemble covariance matrix. The local analyses  are mapped back onto the global domain to obtain the global analysis state. Nevertheless, when sparse observational networks are considered many boxes can contain no observations, in which case the local analyses coincide with the background. The local box sizes can be increased in order to include observations within the local domains, in which case local analysis corrections can be impacted by spurious correlations. Moreover, in practice, the size of local boxes can be still larger than the number of ensemble members and therefore, the local sample covariance matrix can be rank-deficient.

In order to address the above issues this paper proposes a better estimation of the inverse background error covariance matrix $\B^{-1}$ obtained via a modified Cholesky decomposition. By imposing conditional independence between errors in remote model components we obtain sparse approximations of $\B^{-1}$. 

This paper is organized as follows. In Section \ref{sec:background} ensemble based methods and the modified Cholesky decomposition are introduced. Section \ref{sec:EnKF-MC} discusses the proposed ensemble Kalman filter based on a modified Cholesky decomposition for inverse covariance matrix estimation; a theoretical convergence of the estimator in the context of data assimilation as well as its computational effort are discussed. Section \ref{sec:experimental-settings} presents numerical experiments using the Atmospheric General Circulation Model SPEEDY; the results of the new filter are compared against those obtained by the local ensemble transform Kalman filter. Future work is discussed in Section \ref{sec:future-work} and conclusions are drawn in Section \ref{sec:conclusions}.

\section{Background}
\label{sec:background}

The ensemble Kalman filter is a sequential Monte Carlo method for state and parameter estimation of  non-linear models such as those found in atmospheric and oceanic sciences \cite{TELA:TELA299,EnKFEvensen,EnKF1657419}. The EnKF popularity is due to its basic theoretical formulation and its relative ease of implementation \cite{EnKFEvensen}. Given the \textit{background} ensemble \eqref{eq:background-ensemble} EnKF builds the \textit{analysis} ensemble as follows:
\begin{subequations}
\begin{eqnarray}
\label{eq:EnKF-analysis-ensemble}
\displaystyle
\X^\textnormal{a} = \X^\textnormal{b} + \P^\textnormal{b} \cdot \H^T \cdot \lb \R + \H \cdot \P^\textnormal{b} \cdot \H^T \rb \cdot \inno  \in \Re^{\Nstate \times \Nens} ,\,
\end{eqnarray}
where:
\begin{eqnarray}
\label{eq:EnKF-innvo-observations}
\displaystyle
\inno = \Y^\textnormal{s} - \Ho (\X^\textnormal{b}) \in \Re^{\Nobs \times \Nens} \,,
\end{eqnarray}
and the matrix of perturbed observations $\Y^\textnormal{s} \in \Re^{\Nobs \times \Nens}$ is:
\begin{equation}
\label{eq:EnKF-synthetic-data}
\begin{split}
\Y^\textnormal{s} &= \lb \y+\errobs^{[1]},\, \y+\errobs^{[2]},\, \ldots, \,  \y+\errobs^{[\Nens]} \rb \in \Re^{\Nobs \times \Nens} \,,\\
\errobs^{[i]} &\sim \Nor \lp \zero,\, \R \rp, \quad 1 \le i \le \Nens \,.
\end{split}
\end{equation}
\end{subequations}
For ease of notation we have omitted the time index superscripts.

The use of perturbed observations \eqref{eq:EnKF-synthetic-data} during the assimilation provides asymptotically correct analysis-error covariance estimates for large ensemble sizes and makes the formulation of the EnKF statistically consistent \cite{Thomas2002}. However, it also has been shown that the inclusion of perturbed observations introduces sampling errors in the assimilation \cite{SamplingErrors1,SamplingErrors2}. 

One of the important problems faced by current ensemble based methods is that spurious correlations between distant components in the physical space lead to spurious analysis corrections. Better approximations of the background error covariance matrix are proposed in the literature in order to alleviate this problem. A traditional approximation of $\B$ is the Hollingworth and Lonnberg method \cite{TELA:TELA460} in which the difference between observations and background states are treated as a combination of background and observations errors. However, this method provides statistics of background errors in observation space, and requires dense observing networks (not the case in practice). Another method has been proposed by Benedetti and Fisher \cite{QJ:QJ37}  based on forecast differences in which the spatial correlations of background errors are assumed to be similar at 24 and 48 hours forecasts. This method can be efficiently implemented in practice, however, it does not perform well in data-sparse regions, and  the statistics provided are a mixture of analysis and background errors. Another way to reduce the impact of spurious correlations is based on adaptive modeling \cite{AdaptativeModeling}. In this context, the model learns and changes with regard to the data collected (i.e., parameters values and model structures). This allows to calibrate, in time, the error subspace rank (i.e., number of empirical orthogonal functions used in the assimilation process), the tapering parameter (i.e.,  local domain sizes), and the ensemble size, among others. Yet another method based on error subspace statistical estimation is proposed in \cite{LemusDA}. This approach develops an evolving error subspace, of variable size, that targets the processes where the dominant errors occur. Then, the dominant errors are minimized in order to estimate the best model state trajectory with regard to the observations. We proposed approximations based on autoregressive error models \cite{Sandu_2007_ARMA} and using hybrid subspace techniques.\cite{Sandu_2010_hybridCovariance}.

Covariance matrix localization artificially reduces correlations between distant model components via a Schur product with a localization matrix $\boldsymbol{\Pi} \in \Re^{\Nstate \times \Nstate}$:
\begin{eqnarray}
\label{eq:EnKF-localized-ensemble-covariance}
\displaystyle
\widehat{\P}^\textnormal{b} = \boldsymbol{\Pi}\circ \P^\textnormal{b} \in \Re^{\Nstate \times \Nstate}
\end{eqnarray}
and then $\P^\textnormal{b}$ is replaced by $\widehat{\P}^\textnormal{b} \in \Re^{\Nstate \times \Nstate}$ in the EnKF analysis equation \eqref{eq:EnKF-analysis-ensemble}. The entries of $\boldsymbol{\Pi}$ decrease with the distance between model components depending on the radius of influence $\ra$:
\begin{eqnarray}
\label{eq:EnKF-covariance-localisation}
\lle \boldsymbol{\Pi} \rle_{i,j} = \exp \lp -\frac{\pi \lp m_i,\,m_j \rp}{f(\ra)} \rp \,, \text{ for $1 \le i \le j \le \Nstate$}\,,
\end{eqnarray}
where $\pi \lp m_i,\,m_j \rp$ represents the physical distance between the model components $m_i$ and $m_j$ while, $f(\ra)$ is a function of $\ra$ (e.g., $f(\ra) = 2 \cdot \ra^2$). The exponential decay allows to reduce the impact of innovations between distant model components. The use of covariance matrix localization alleviates the impact of sampling errors. However, the explicit computation of $\boldsymbol{\Pi}$ (and even $\P^\textnormal{b}$) is prohibitive owing to numerical model dimensions. Thus, domain localization methods \cite{spatial_localization,local_analysis_1} are commonly used in the context of operational data assimilation. One of the best EnKF implementations based on domain localization is the local ensemble transform Kalman filter (LETKF) \cite{LETKFHunt}. In the LETKF the analysis increments are computed in  the space spanned by the ensemble perturbations $\U^\textnormal{b}$ defined in \eqref{eq:matrix-of-member-deviations}. An approximation of the analysis covariance matrix in this space reads:
\begin{subequations}
\label{eq:LETKF-method}
\begin{eqnarray}
\label{eq:LETKF-analysis-covariance-ensemble-space}
\displaystyle
\widehat{\P}^\textnormal{a} = \lb \lp\Nens-1\rp \cdot \I + \Q^T \cdot \R^{-1} \cdot \Q\rb^{-1} \in \Re^{\Nens \times \Nens},\,
\end{eqnarray}
where $\Q = \H \cdot \U^\textnormal{b} \in \Re^{\Nobs \times \Nens}$ and $\I$ is the identity matrix consistent with the dimension. The analysis increments in the subspace are:
\begin{eqnarray}
\W^\textnormal{a} = \widehat{\P}^\textnormal{a} \cdot \Q^T \cdot \R^{-1} \cdot \lb \y - \Ho (\xm^\textnormal{b}) \rb \in \Re^{\Nens \times 1} ,\,
\end{eqnarray}
from which an estimation of the analysis mean in the model space can be obtained:
\begin{eqnarray}
\label{eq:LETKF-analysis-mean}
\displaystyle
\xm^\textnormal{a} = \xm^\textnormal{b} + \U^\textnormal{b} \cdot \W^\textnormal{a} \in \Re^{\Nstate \times 1} .\,
\end{eqnarray}
Finally, the analysis ensemble reads:
\begin{eqnarray}
\label{eq:LETKF-analysis-ensemble}
\x^\textnormal{a} = \xm^\textnormal{a} \cdot \ones_{\Nens}^T + \U^\textnormal{b} \cdot \lb \lp \Nens-1\rp \cdot \widehat{\P}^\textnormal{a} \rb^{1/2} \in \Re^{\Nstate \times \Nens} .\,
\end{eqnarray}
\end{subequations}

The domain localization in the LETKF is performed as follows: each model component is surrounded by a local box of radius $\ra$. Within each local domain the analysis equations \eqref{eq:LETKF-method} are applied, and therefore a local analysis component is obtained. All local analysis components are mapped back onto the model space to obtain the global analysis state. Local boxes for different radii  are shown in Figure \ref{fig:LETKF-radius}. The local sample covariance matrix \eqref{eq:covariance-ensemble} is utilized as the covariance estimator of the local $\B$. This can perform well when small radii $\ra$ are considered during the assimilation step. However, for large values of $\ra$, the analysis corrections can be impacted by spurious correlations since the local sample covariance matrix can be rank deficient. Consequently, the local analysis increments can perform poorly.
\begin{figure}[H]
\centering
\begin{subfigure}{0.3\textwidth}
\centering
\includegraphics[width=0.9\textwidth,height=0.9\textwidth]{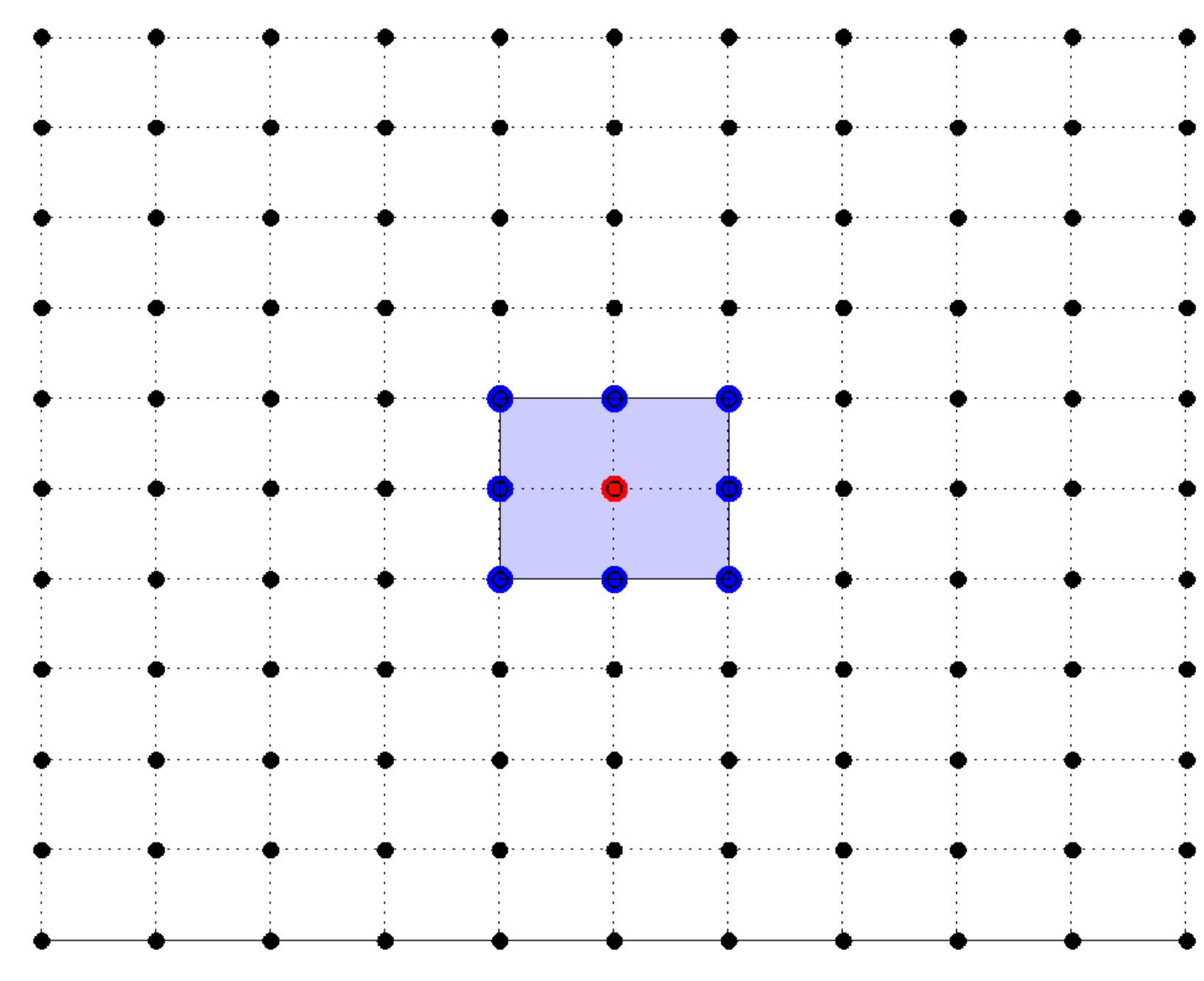}
\caption{$\ra=1$}
\end{subfigure}%
\begin{subfigure}{0.3\textwidth}
\centering
\includegraphics[width=0.9\textwidth,height=0.9\textwidth]{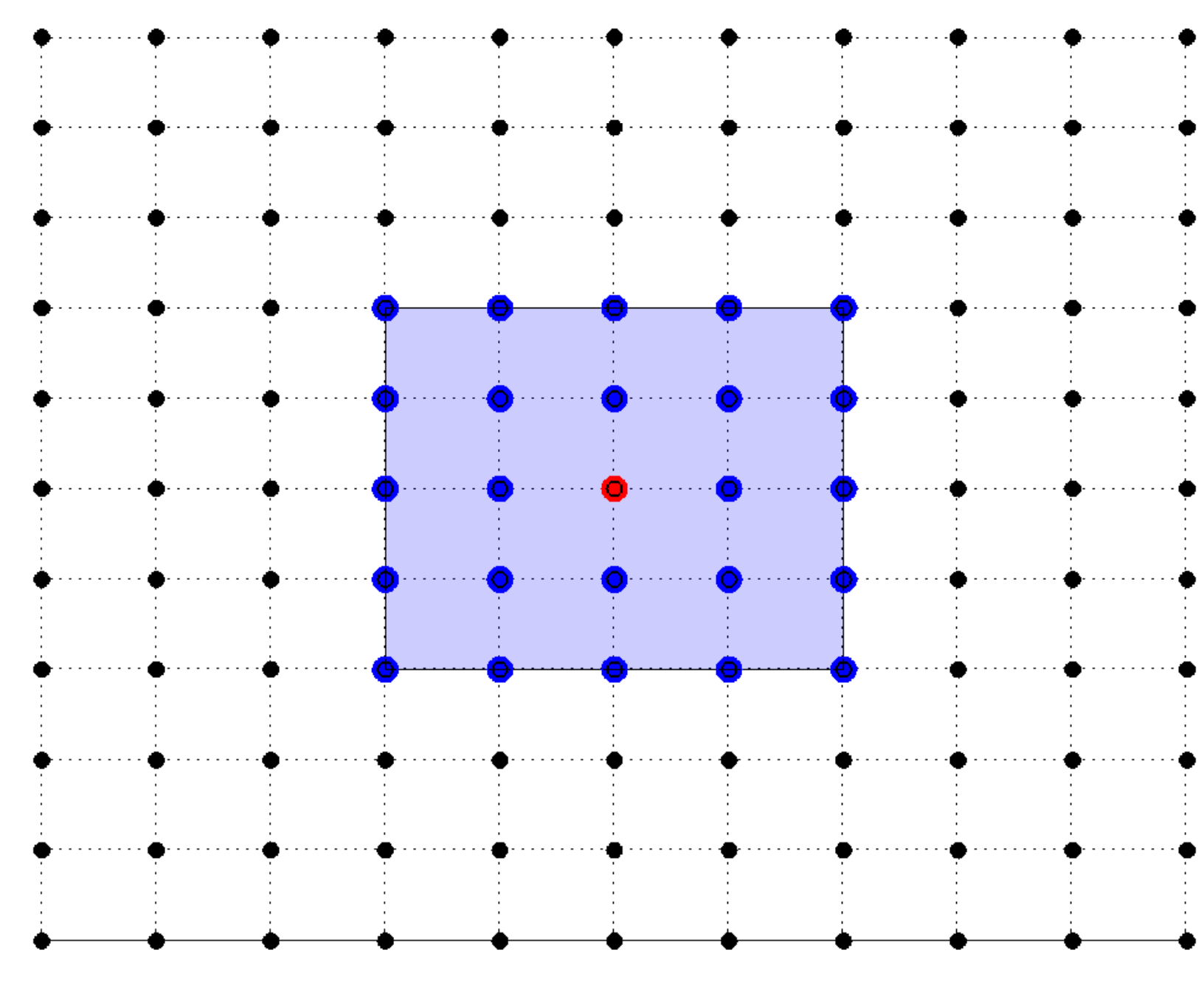}
\caption{$\ra=2$}
\label{subfig:grid-2}
\end{subfigure}
\begin{subfigure}{0.3\textwidth}
\centering
\includegraphics[width=0.9\textwidth,height=0.9\textwidth]{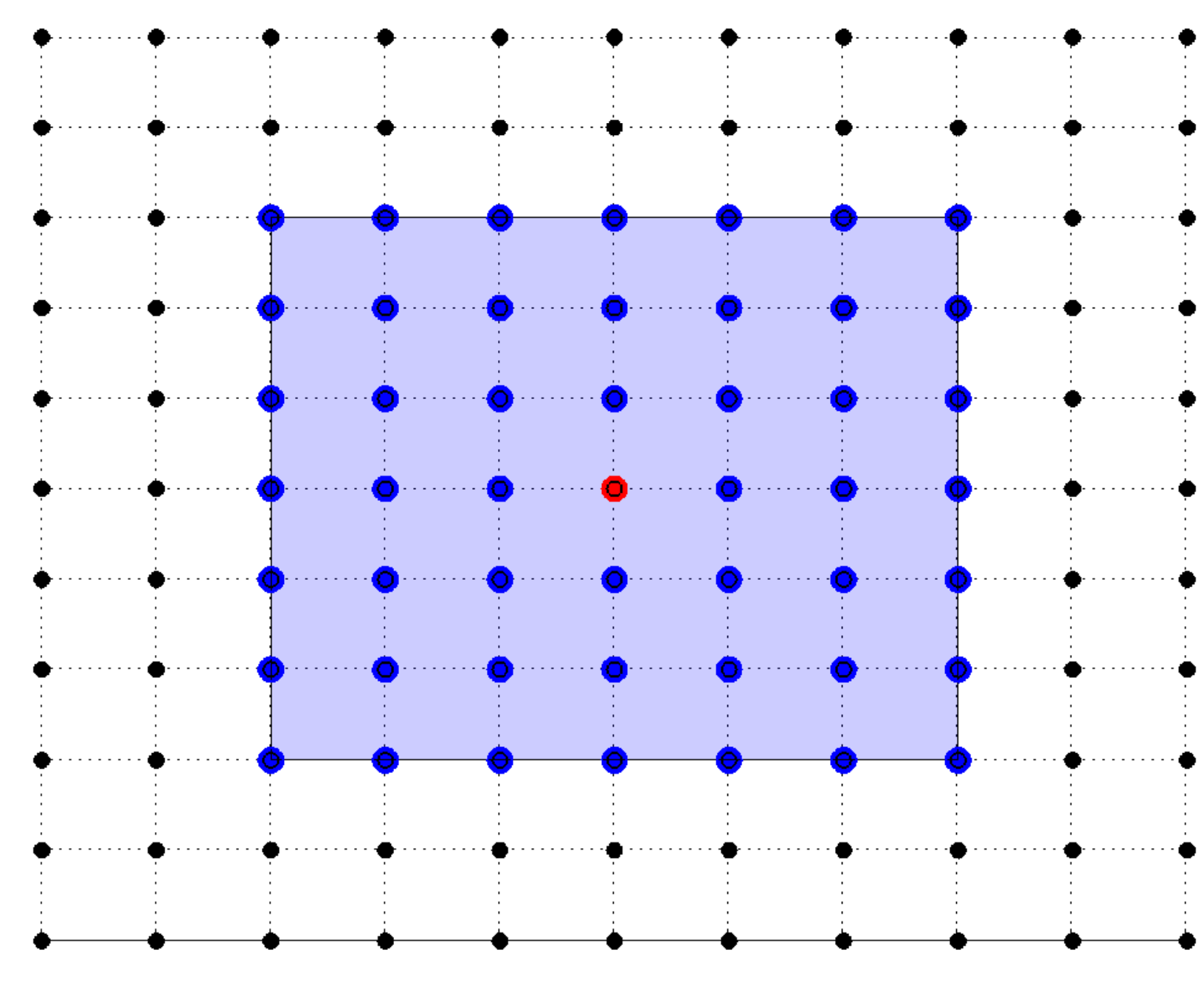}
\caption{$\ra=3$}
\label{subfig:grid-3}
\end{subfigure}
\caption{Local domains for different radii of influence $\ra$. The red dot is the model component to be assimilated, blue components are within the scope of $\ra$, and black model components are unused during the local assimilation process.}
\label{fig:LETKF-radius}
\end{figure}

There is an opportunity to reduce the impact of sampling errors by improving the background error covariance estimation. We achieve this by making use of the modified Cholesky decomposition for inverse covariance matrix estimation \cite{bickel2008}. Consider a sample of $\Nens$ Gaussian random vectors:
\begin{eqnarray*}
\label{eq:intro-samples}
\displaystyle
\S = \lb \s^{[1]},\, \s^{[2]},\, \ldots ,\, \s^{[\Nens]} \rb \in \Re^{\Nstate \times \Nens} ,\,
\end{eqnarray*}
with statistical moments:
\begin{eqnarray*}
\s^{[j]} \sim \Nor \lp \zero_{\Nstate} ,\, \Q \rp, \, \text{for $1 \le j \le \Nens$} ,\,
\end{eqnarray*}
where $\s^{[j]} \in \Re^{\Nstate \times 1}$ denotes the $j$-th sample. Denote by $\x_{[i]} \in \Re^{\Nens \times 1}$ the vector holding the $i$-th component across all the samples (the $i$-th row of $\S$, transposed). The modified Cholesky decomposition arises from regressing each component on his predecessors according to some component ordering:
\begin{eqnarray}
\label{eq:intro-modified-Chokesky-decomposition}
\displaystyle
\x_{[i]} = \sum_{j=1}^{i-1} \x_{[j]} \cdot \beta_{i,j} + \err_{[i]} \in \Re^{\Nens \times 1} ,\,\quad 2 \le i \le \Nstate,
\end{eqnarray}
where $\x_{[j]}$ is the $j$-th model component which precedes $\x_{[i]}$ for $1 \le j \le i-1$, $\err_{[1]} = \x_{[1]}$, and  $\err_{[i]} \in \Re^{\Nens \times 1}$ is the error in the $i$-th component regression for $i \ge 2$. Likewise, the coefficients $\beta_{i,j}$ in \eqref{eq:intro-modified-Chokesky-decomposition} can be computed by solving the optimization problem:
\begin{eqnarray}
\label{eq:intro-coefficient-computation}
\displaystyle
\be_{[i]} = \underset{\be}{\arg\,\min} \ln \x_{[i]} - \Z_{[i]} \cdot \be  \ \rn^2_2
\end{eqnarray}
where 
\begin{eqnarray*}
\displaystyle
\Z_{[i]} &=& \lb \x^{[1]},\, \x^{[2]},\, \ldots,\, \x^{[i-1]}\rb^T \in \Re^{(i-1) \times \Nens} ,\quad 2 \le i  \le \Nstate, \\
\displaystyle
\be_{[i]} &=& \lb \beta_{i,1},\, \beta_{i,2},\, \ldots ,\, \beta_{i,i-1} \rb^{T} \in \Re^{(i-1) \times 1} .\,
\end{eqnarray*}
The regression coefficients form the lower triangular matrix 
\begin{subequations}
\label{eq:intro-empirical-T-D}
\begin{eqnarray}
\label{eq:intro-lower-T}
\big\{ \TE \big\}_{i,j} = \left\{
\begin{aligned}
-\beta_{i,j} & \text{ for }1 \le j < i, \\
1 & \text{ for } j=i, \\
0 & \text{ for } j>i,
\end{aligned} \right.  \quad 1 \le i \le \Nstate,
\end{eqnarray} 
where $\big\{ \TE \big\}_{i,j}$ denotes the $(i,j)$-th component of matrix $\TE \in \mathbb{R}^{\Nstate \times \Nstate}$. The empirical variances $\evar$ of the residuals $\err_{[i]}$ form the diagonal matrix:
\begin{eqnarray}
\label{eq:intro-diagonal-D}
\DE = \underset{1 \le i \le \Nstate}{\textnormal{diag}}\left( \evar ( \err_{[i]}) \right) = \underset{1 \le i \le \Nstate}{\textnormal{diag}}\left( \frac{1}{\Nens-1} \sum_{j=1}^{\Nens} \big\{\err_{[i]} \big\}^2_{j} \right) \in \mathbb{R}^{\Nstate \times \Nstate}\,.
\end{eqnarray} 
\end{subequations}
where $\lle \DE \rle_{1,1} = \evar\lp \x_{[1]}\rp$. Then an estimate of $\Q^{-1}$ can be computed as follows:
\begin{subequations}
\label{eq:intro-Q-approximations}
\begin{eqnarray}
\label{eq:intro-Q-inverse}
\displaystyle
\QEST^{-1} = \TE^T \cdot \DE^{-1} \cdot \TE \in \Re^{\Nstate \times \Nstate} ,\,
\end{eqnarray}
or, by basic matrix algebra identities the estimate of $\Q$ reads:
\begin{eqnarray}
\displaystyle
\QEST = \TE^{-1} \cdot \DE \cdot \TE^{-T} \in \Re^{\Nstate \times \Nstate} \,.
\end{eqnarray}
\end{subequations}
Note that the structure of $\QEST^{-1}$ is strictly related to the structure of $\TE$. This can be exploited in order to obtain sparse estimators of $\Q^{-1}$ by imposing that some entries of $\TE$ are zero. This is important for high dimensional probability distributions where the explicit computation of $\QEST$ or $\QEST^{-1}$ is prohibitive. The zero components in $\TE$ can be justified as follows: when two components are {\it conditionally independent} their corresponding entry in $\QEST^{-1}$ is zero. In the context of data assimilation, the conditional independence of background errors between different model components can be achieved by making use of domain localization. We can consider zero correlations between background errors corresponding to model components located at distances that exceed a radius of influence $\ra$. In the next section we present an ensemble Kalman filter implementation based on modified Cholesky decomposition for inverse covariance matrix estimation.

\section{Ensemble Kalman Filter Based On Modified Cholesky Decomposition}
\label{sec:EnKF-MC}

In this section we discuss the new ensemble Kalman filter based on modified Cholesky decomposition for inverse covariance matrix estimation ( EnKF-MC).

\subsection{Estimation of the inverse background covariance}

The columns of matrix \eqref{eq:matrix-of-member-deviations}
\begin{eqnarray*}
\U^\textnormal{b} = \lb \ub^{\textnormal{b}[1]},\,\ub^{\textnormal{b}[2]},\, \ldots ,\, \ub^{\textnormal{b}[\Nens]}   \rb \in \Re^{\Nstate \times \Nens}
\end{eqnarray*}
can be  seen as samples of the (approximately normal) distribution:
\begin{eqnarray*}
\x^{\textnormal{b}[j]} - \xm^\textnormal{b} = \ub^{\textnormal{b}[j]} \sim \Nor \lp \zeros,\, \B \rp,\, \text{ for $1 \le j \le \Nens$} \,,
\end{eqnarray*}
and therefore, if we let $\x_{[i]} \in \Re^{\Nens \times 1}$ in \eqref{eq:intro-modified-Chokesky-decomposition}  to be the vector formed by the $i$-th row of matrix \eqref{eq:matrix-of-member-deviations}, for $1 \le i \le \Nstate$, according to equations \eqref{eq:intro-Q-approximations}, an estimate of the inverse background error covariance matrix reads:
\begin{subequations}
\label{eq:EnKF-MC-background-cov}
\begin{eqnarray}
\label{eq:EnKF-MC-inverse-background-cov}
\displaystyle \B^{-1} \approx \BE^{-1}  = \TE^T \cdot \DE^{-1} \cdot \TE \in \Re^{\Nstate \times \Nstate} ,\,
\end{eqnarray}
and therefore:
\begin{eqnarray}
\label{eq:EnKF-MC-background-cov}
\displaystyle
\B \approx \BE = \TE^{-1} \cdot \DE \cdot \TE^{-T} \in \Re^{\Nstate \times \Nstate} \,.
\end{eqnarray}
\end{subequations}
As we mentioned before, the structure of $\BE^{-1}$ depends on that of $\TE$. If we assume that the correlations between model components are local, and there are no correlations outside a radius of influence $\ra$, we obtain lower-triangular sparse estimators of $\TE$. Consequently, the resulting $\BE^{-1}$ will also be sparse, and $\BE$ will be localized. Since the regression \eqref{eq:intro-modified-Chokesky-decomposition} is performed only on the predecessors of each model component, an ordering (labeling) must be set on the model components prior the computation of $\TE$. Since we work with gridded models we consider column-major and row-major orders. They are illustrated in Figure \ref{fig:ordering} for a two-dimensional domain. Figure \ref{fig:predecessors} shows the local domain and the predecessors of the model component 6 when column-major order is utilized.
\begin{figure}[H]
\centering
\begin{subfigure}{0.4\textwidth}
\centering
\includegraphics[width=0.6\textwidth,height=0.6\textwidth]{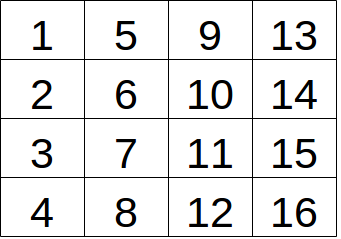}
\caption{Column-major order}
\end{subfigure}%
\begin{subfigure}{0.4\textwidth}
\centering
\includegraphics[width=0.6\textwidth,height=0.6\textwidth]{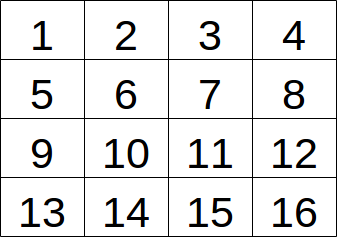}
\caption{Row-major order}
\label{subfig:row-major-order}
\end{subfigure}
\caption{Row-major and column-major ordering for a $4 \times 4$ domain. The total number of model components is 16. }
\label{fig:ordering}
\end{figure}
\begin{figure}[H]
\centering
\begin{subfigure}{0.4\textwidth}
\centering
\includegraphics[width=0.6\textwidth,height=0.6\textwidth]{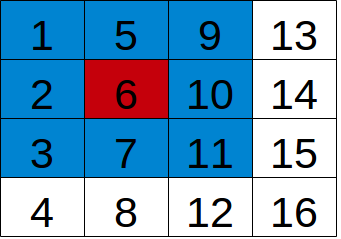}
\caption{In blue, local box for the model component 6 when $\ra=2$.}
\end{subfigure} \hspace{1em}
\begin{subfigure}{0.4\textwidth}
\centering
\includegraphics[width=0.6\textwidth,height=0.6\textwidth]{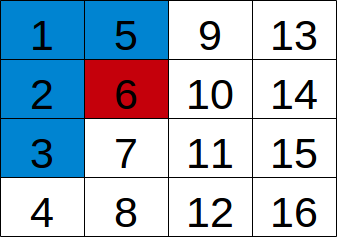}
\caption{In blue, predecessors of the model component 6 for $\ra=2$.}
\end{subfigure}
\caption{Local model components (local box) and local predecessors for the model component 6 when $\ra=2$. Column-major ordering is utilized to label the model components.}
\label{fig:predecessors}
\end{figure}

The estimation of $\BE^{-1}$ proceeds as follows:
\begin{enumerate}
\item Form the matrix $\Z_{[i]} \in \Re^{\pre \times \Nens}$ with the predecessors of the $i$-th model component:
\begin{eqnarray}
\label{eq:EnKF-MC-predecessors}
\Z_{[i]} = \lb \x^{[q(i,1)]},\,\x^{[q(i,2)]},\, \ldots ,\, \x^{[q(i,\pre)]} \rb^T \in \Re^{\pre \times \Nens} \,,
\end{eqnarray}
where $\x^{[e]}$ is the $e$-th row of matrix \eqref{eq:matrix-of-member-deviations}, $\pre$ is the number of predecessors of component $i$, and $1 \le q(i,j) \le \Nstate$ is the index (row of matrix \eqref{eq:matrix-of-member-deviations}) of the $j$-th predecessor of the $i$-th model component. 

\item For the $i$-th model components the regression coefficients are obtained as follows:
\begin{eqnarray*}
\x_{[i]} = \sum_{j=1}^{\pre} \beta_{i,j} \cdot \x^{[q(i,j)]} + \err_{[i]} \in \Re^{\Nens \times 1} \,.
\end{eqnarray*}
For $2 \le i \le \Nstate$, compute $\be_{[i]} = [\beta_{i,1},\,\beta_{i,2},\,\ldots,\,\beta_{i,\pre}] \in \Re^{\pre \times 1}$ by solving the optimization problem \eqref{eq:intro-coefficient-computation} with $\Z_{[i]}$ given by \eqref{eq:EnKF-MC-predecessors}.

\item Build the matrices 
\begin{eqnarray*}
\big\{ \TE \big\}_{i,q(i,j)}=  -\beta_{i,j} ~~ \text{ for}~~1 \le j \le \pre, ~~1 < i \le \Nstate \,; \quad \big\{ \TE \big\}_{i,i}=1,
\end{eqnarray*}
and $\DE$ according to equation \eqref{eq:intro-diagonal-D}. Note that the number of non-zero elements in the $i$-th row of $\TE$  equals the number of predecessors $\pre$. 

\end{enumerate}

Note that the solution of the optimization problem \eqref{eq:intro-coefficient-computation} can be obtained as follows:
\begin{eqnarray}
\label{eq:EnKF-MC-solution-of-optimization-problem}
\be_{[i]} = \lb \Z_{[i]} \cdot { \Z_{[i]} }^T \rb^{-1} \cdot \Z_{[i]} \cdot \x_{[i]} 
\end{eqnarray}
and since the ensemble size can be smaller than the number of model components, $\Z_{[i]} \cdot { \Z_{[i]} }^T \in \Re^{\pre \times \pre}  $ can be rank deficient. To overcome this situation, regularization of the zero singular values of $\Z_{[i]} \cdot  {\Z_{[i]}}^T$ can be used. One possibility is Tikhonov regularization \cite{Tik1,Tik2,Tik3}:
\begin{eqnarray}
\label{eq:EnKF-MC-optimization-tik}
\be_{[i]} = \underset{\be}{\arg\,\min} \lle \ln \x_{[i]} - \Z_{[i]} \cdot \be   \rn^2_2 + \lambda^2 \cdot \ln  \be \rn_2^2 \rle
\end{eqnarray}
where $\lambda \in \Re$. In our context the best choice for $\lambda$ relies on prior knowledge of the background and the observational errors \cite{Tik4}. Another approach to regularization is to use a truncated singular value decomposition (SVD) of $\Z_{[i]}$:
\begin{eqnarray*}
\Z_{[i]} = \U^{\Z_{[i]}} \cdot \Eig^{\Z_{[i]}} \cdot {\V^{\Z_{[i]}}}^T \in \Re^{\pre \times \Nens},
\end{eqnarray*}
where $\U^{\Z_{[i]}} \in \Re^{\pre \times \pre}$ and $\V^{\Z_{[i]}} \in \Re^{\Nens \times \Nens}$ are the right and the left singular vectors of $\Z_{[i]}$, respectively. Likewise, $\Eig^{\Z_{[i]}} \in \Re^{\pre \times \Nens}$ is a diagonal matrix whose diagonal entries are the singular values of $\Z_{[i]}$ in descending order. The solution of \eqref{eq:intro-coefficient-computation} can be computed as follows \cite{Jiang2000137,VANHUFFEL1991675,Per1}:
\begin{eqnarray}
\label{eq:EnKF-MC-truncated-SVD}
\be_{[i]} =\sum_{j=1}^{k_i} \frac{1}{\tau_j} \cdot \ub^{\Z_{[i]}}_j \cdot {\vv_j^{\Z_{[i]}}}^T \cdot \x_{[i]} \quad \text{with}\,\frac{\tau_{j}}{\tau_{\rm max}}  \ge \sigma_r,\,
\end{eqnarray}
where $\tau_j$ is the $j$-th singular value with corresponding right and left singular vectors $\ub^{\Z_{[i]}}_j \in \Re^{\pre \times 1}$ and $\vv^{\Z_{[i]}}_j \in \Re^{\pre \times 1}$, respectively, $\sigma_r \in (0,1)$ is a predefined threshold, and  $\tau_{\rm max} = \max\lle  \tau_1,\, \tau_2,\, \ldots,\, \tau_{\Nens-1}\rle$. Since small singular values are more sensitive to the noise in $\x_{[i]}$, the threshold $\tau_j > \tau_{\max} \cdot \sigma_r$ seeks to neglect their contributions.

\subsection{Formulation of EnKF-MC}

Once $\BE^{-1}$ is estimated, the EnKF based on modified Cholesky decomposition (EnKF-MC) computes the analysis using Kalman's formula:
\begin{subequations}
\label{eq:EnKF-MC-formulations}
\begin{eqnarray}
\label{eq:EnKF-MC-primal-incremental}
\x^\textnormal{a} &=& \x^\textnormal{b} + \AE \cdot \H^T \cdot \R^{-1} \cdot \inno \in \Re^{\Nstate \times \Nens},
\end{eqnarray}
where $\AE \in \Re^{\Nstate \times \Nstate}$ is the estimated analysis covariance matrix
\begin{eqnarray*}
\displaystyle
\AE = \lb \BE^{-1} +\H^T \cdot \R^{-1} \cdot \H \rb^{-1}\,,
\end{eqnarray*}
and $\inno \in \Re^{\Nobs \times \Nens}$ is the innovation matrix on the perturbed observations given in \eqref{eq:EnKF-innvo-observations}. 

Computationally-friendlier alternatives to \eqref{eq:EnKF-MC-dual} can be obtained by making use of elementary matrix identities:
\begin{eqnarray}
\label{eq:EnKF-MC-primal}
\displaystyle
\x^\textnormal{a} &=& \AE \cdot \lb \BEST^{-1} \cdot \x^\textnormal{b} + \H^T \cdot \R^{-1} \cdot \Y^\textnormal{s} \rb \in \Re^{\Nstate \times \Nens},\, \\
\label{eq:EnKF-MC-dual}
\displaystyle
\x^\textnormal{a} &=& \x^\textnormal{b} + \TE^{-1} \cdot \DE^{1/2} \cdot \V_{\BE}^T \cdot \lb \R + \V_{\BE} \cdot \V_{\BE}^T \rb^{-1} \cdot \inno,\,  \\
\notag
\V_{\BE} &=& \H  \cdot \TE^{-1} \cdot \DE^{1/2} \in \Re^{\Nstate \times \Nobs},
\end{eqnarray}
\end{subequations}
where $\Y^\textnormal{s}$ are the perturbed observations. The formulation \eqref{eq:EnKF-MC-dual} is well-known as the EnKF dual formulation, \eqref{eq:EnKF-MC-primal} is known as the EnKF primal formulation, and the equation \eqref{eq:EnKF-MC-primal-incremental} is the incremental form of the primal formulation. In the next subsection, we discuss the computational effort of the EnKF-MC implementations \eqref{eq:EnKF-MC-formulations}.

\subsection{Computational effort of EnKF-MC implementations}

The computational cost of the different EnKF-MC implementations depend, in general, on the model state dimension $\Nstate$, the number of observed components $\Nobs$, the radius of influence $\ra$, and the ensemble size $\Nens$. Typically \cite{Tippett2003} the data error covariance matrix $\R$ has a simple structure (e.g., block diagonal), the ensemble size is much smaller than the model dimension ($\Nstate \gg \Nens$), and the observation operator $\H$ is sparse or can be applied efficiently. We analyze the computational effort of the formulation \eqref{eq:EnKF-MC-primal-incremental}; similar analyses can be carried out for the other formulations. The incremental formulation can be written as follows:
\begin{eqnarray*}
\x^\textnormal{a} = \x^\textnormal{b} + \DX^\textnormal{a} \,,
\end{eqnarray*}
where the analysis increments $\DX^\textnormal{a} \in \Re^{\Nstate \times \Nens}$ are given by the solution of the linear system:
\begin{eqnarray*}
\lb \BE^{-1} + \R_{\H} \cdot \R_{\H}^T\rb \cdot \DX^\textnormal{a} = \inno_{\H} \,.
\end{eqnarray*}
with $\R_{\H} = \H^T \cdot \R^{-1/2} \in \Re^{\Nstate \times \Nobs}$, $\inno_{\H} = \H^T \cdot \R^{-1} \cdot \inno \in \Re^{\Nstate \times \Nens}$, and $\inno$ is given in \eqref{eq:EnKF-innvo-observations}. This linear system can be solved making use of the iterative Sherman Morrison formula \cite{Nino2014} as follows:
\begin{enumerate}
\item Compute:
\begin{subequations}
\label{eq:EnKF-MC-linear-system-primal}
\begin{eqnarray}
 \WV^{(0)[i]}_{\Z} &=& \lb \TE^T \cdot \D^{-1} \cdot \TE \rb^{-1} \cdot \inno_{\H}^{[i]} ,\, \text{ for $1 \le i \le \Nens$},\, \\
  \WV^{(0)[j]}_{\U} &=& \lb \TE^T \cdot \D^{-1} \cdot \TE \rb^{-1} \cdot \R_{\H}^{[j]} ,\, \text{ for $1 \le j \le \Nobs$} \,.
\end{eqnarray}
\end{subequations}
where $\inno_{\H}^{[i]} \in \Re^{\Nstate \times 1}$ and $\R_{\H}^{[j]} \in \Re^{\Nstate \times 1}$ denote the $i$ and $j$ columns of matrices $\inno_{\H}$ and $\R_{\H}$, respectively. Since $\TE$ is a sparse unitary lower triangular matrix, the direct solution of the linear system \eqref{eq:EnKF-MC-linear-system-primal} can be obtained by making use of forward and backward substitutions. Hence, this step can be performed with:
\begin{eqnarray}
\label{eq:EnKF-MC-computational-effort-step-1}
\displaystyle
\BO{\Nstate_{nz} \cdot \Nstate \cdot \Nens + \Nstate_{nz} \cdot \Nstate \cdot \Nobs }
\end{eqnarray}
long computations, where $\Nstate_{nz}$ denotes the maximum number of non-zero elements across all rows of $\TE$, this is
\begin{eqnarray*}
\Nstate_{nz} = \max \lle p_1,\,p_2,\,\ldots,\,p_{\Nstate} \rle
\end{eqnarray*}
where $p_i$ is the number of predecessors of model component $i$, for $1 \le i \le \Nstate$.
\item For $1 \le i \le \Nobs$ compute:
\begin{eqnarray*}
\h^{(i)} &=& \frac{1}{\gamma^{(i)}} \cdot \WV_{\U}^{(i-1)[i]},\, \text{with } \gamma^{(i)} =\lb 1+{\R_{\H}^{[i]}}^T \cdot \WV_{\U}^{(i-1)[i]} \rb^{-1} \,, \\
\WV_{\Z}^{(i)[j]} &=& \WV_{\Z}^{(i-1)[j]} - \h^{(i)} \cdot \lb {\R_{\H}^{[i]}}^T \cdot \WV_{\Z}^{(i-1)[j]} \rb ,\, \text{ for $1 \le j \le \Nens$}\,, \\
\WV_{\U}^{(i)[k]} &=& \WV_{\U}^{(i-1)[k]} - \h^{(i)} \cdot \lb {\R_{\H}^{[i]}}^T \cdot \WV_{\U}^{(i-1)[k]} \rb ,\, \text{ for $i+1 \le k \le \Nobs$}\,. \\
\end{eqnarray*}
Note that, at each step, $\h^{(i)}$ can be computed with $\Nstate$ long computations, while $\WV_{\Z}$ and $\WV_{\U}$ can be obtained with $\Nstate \cdot \Nens$ and $\Nstate \cdot \Nobs$ long computations, respectively. This leads to the next bound for the number of long computations:
\begin{eqnarray*}
\BO{\Nobs \cdot \Nstate+\Nobs \cdot \Nstate \cdot \Nens + \Nobs^2 \cdot \Nstate} \,.
\end{eqnarray*}
\end{enumerate}
Hence, the computational effort involved during the assimilation step of formulation \eqref{eq:EnKF-MC-primal-incremental} can be bounded by:
\begin{eqnarray*}
\BO{\Nobs \cdot \Nstate+\Nobs \cdot \Nstate \cdot \Nens + \Nobs^2 \cdot \Nstate+\Nstate_{nz} \cdot \Nstate \cdot \Nens + \Nstate_{nz} \cdot \Nstate \cdot \Nobs},\,
\end{eqnarray*}
which is linear with respect to the number of model components. For dense observational networks, when local observational operators can be approximated, domain decomposition can be exploited in order to reduce the computational effort during the assimilation cycle. This can be done as follows:
\begin{enumerate}
\item The domain is split in certain number of sub-domains (typically matching a given number of processors).
\item Background error correlations are estimated locally.
\item The assimilation is performed on each local domain.
\item The analysis sub-domains are mapped back onto the model domain from which the global analysis state is obtained.
\end{enumerate}
Figure \ref{fig:sub-domain-splitting} shows the global domain splitting for different sub-domain sizes. In Figure \ref{subfig:boundary-information} the boundary information needed during the assimilation step for two particular sub-domains is shown in dashed blue lines. Note that each sub-domain can be assimilated independently. Note that we only use domain decomposition in order to reduce the computational effort of the proposed implementation (and its derivations) and not in order to reduce the impact of spurious correlations. 
 \begin{figure}[H]
\centering
\begin{subfigure}{0.5\textwidth}
\centering
\includegraphics[width=0.9\textwidth,height=0.625\textwidth]{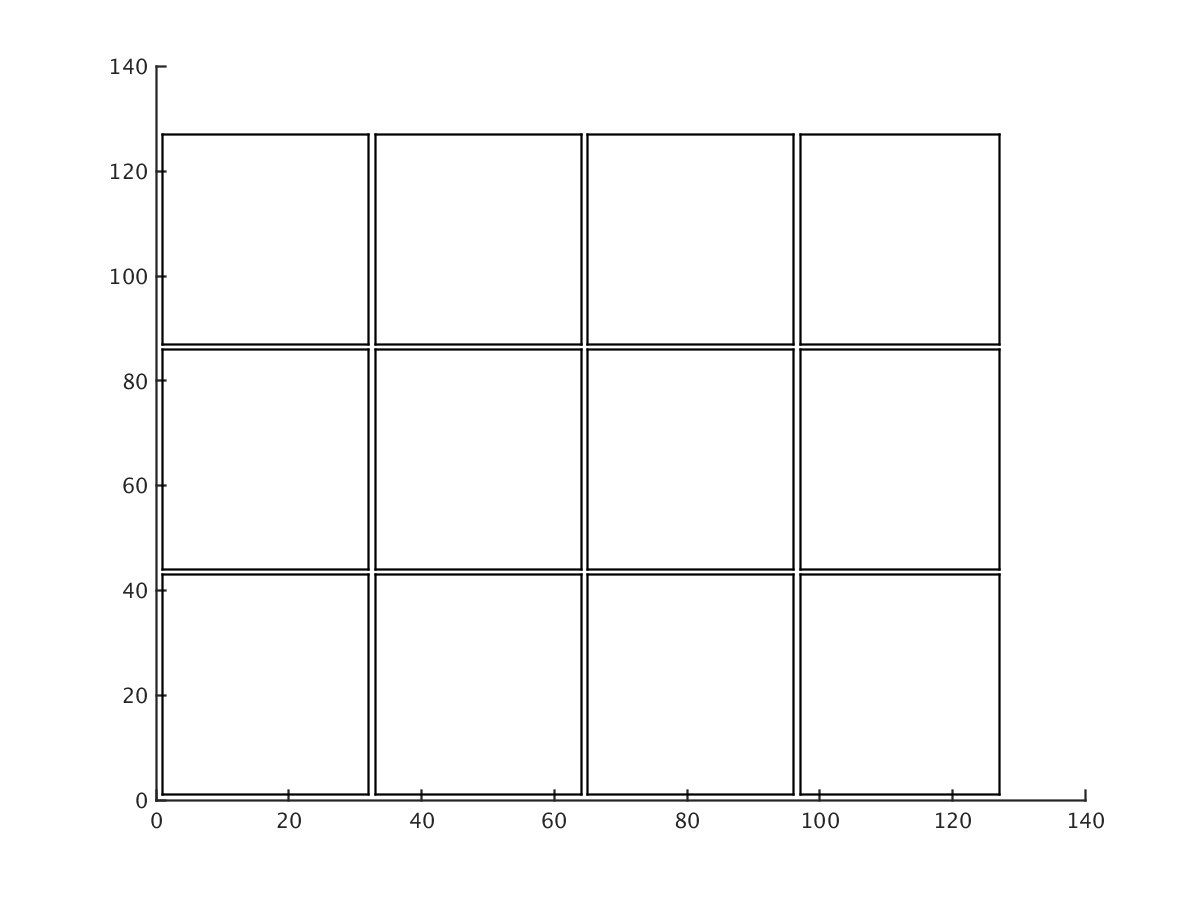}
\caption{Number of sub-domains 12}
\end{subfigure}%
\begin{subfigure}{0.5\textwidth}
\centering
\includegraphics[width=0.9\textwidth,height=0.625\textwidth]{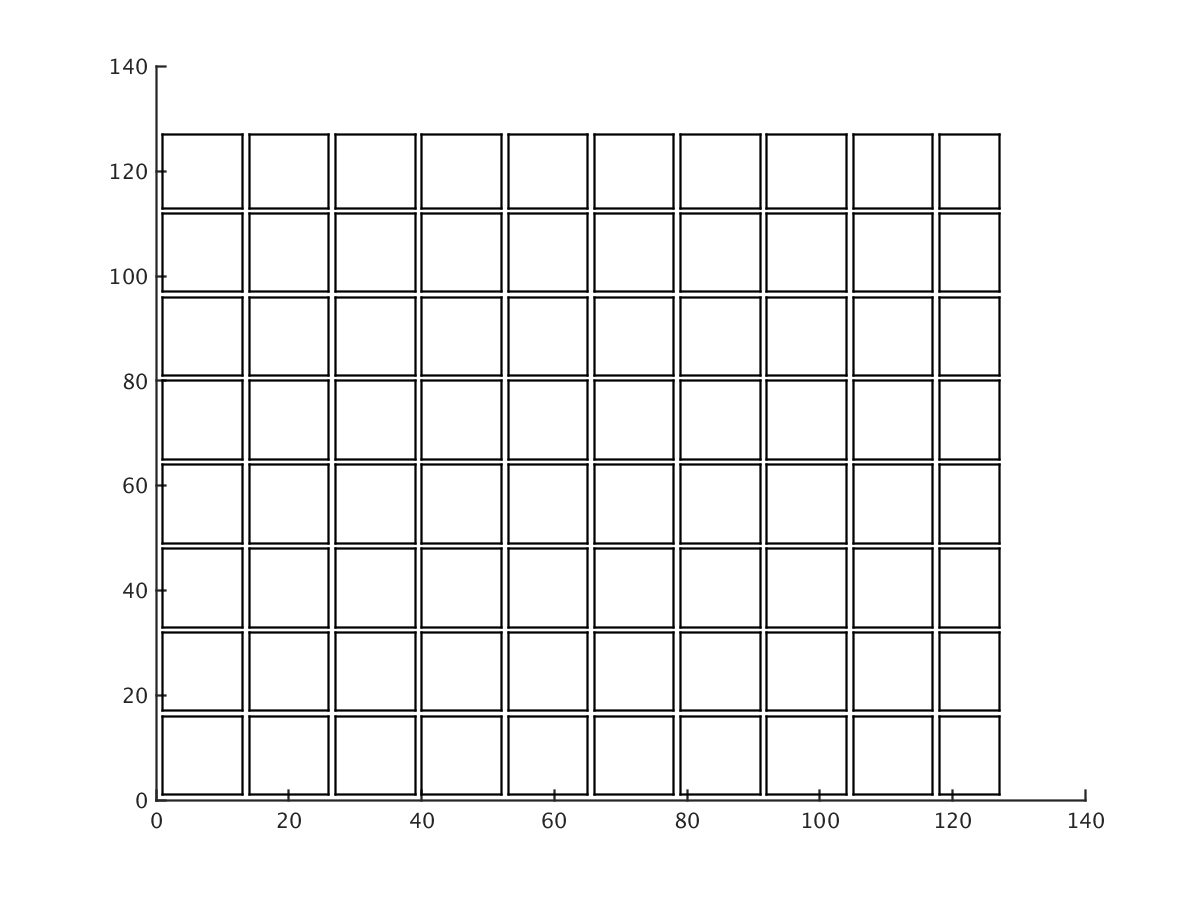}
\caption{Number of sub-domains 80}
\end{subfigure}

\begin{subfigure}{0.5\textwidth}
\centering
\includegraphics[width=0.9\textwidth,height=0.9\textwidth]{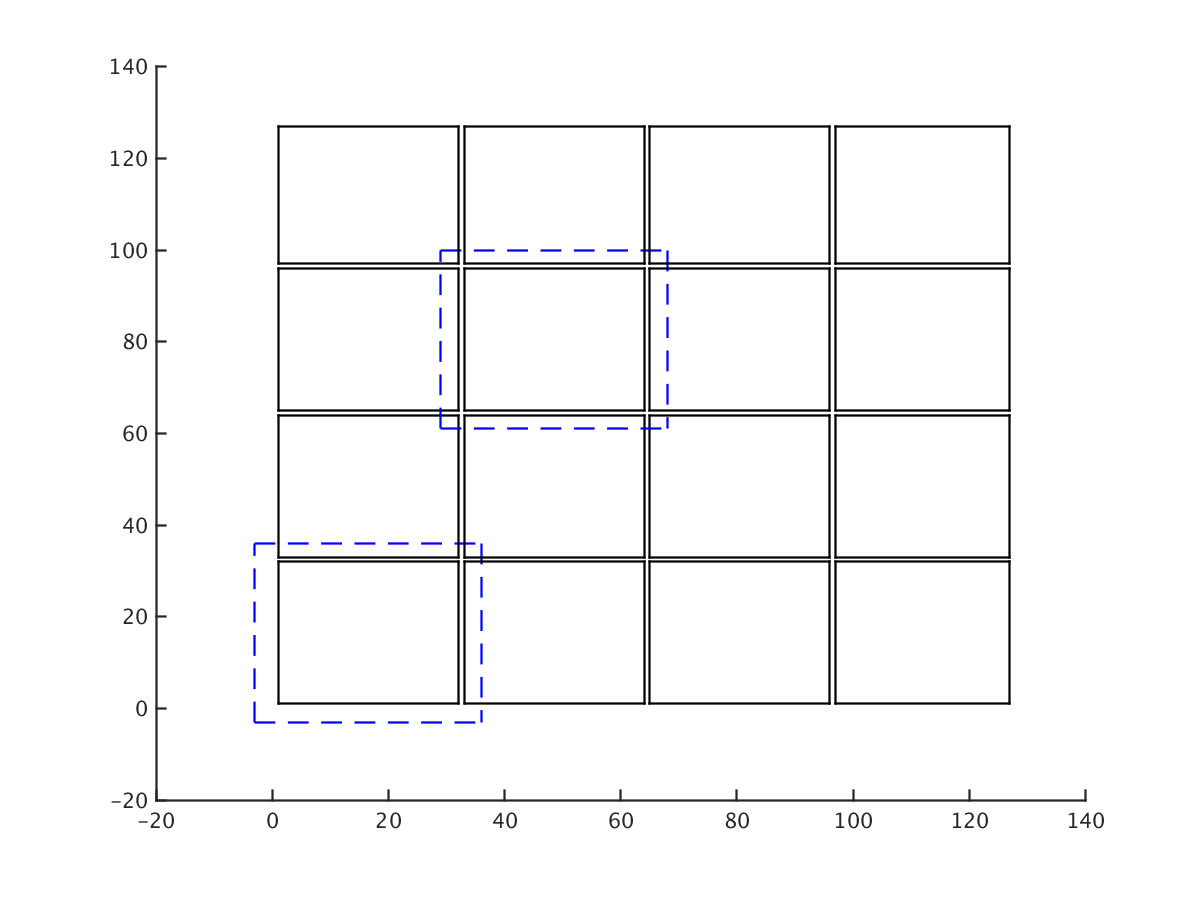}
\caption{Number of sub-domains 16.}
\label{subfig:boundary-information}
\end{subfigure}%
\caption{Global domain splitting in different sub-domain sizes. Blue local boxes reflects the boundary information utilized in order to perform local data assimilation.}
\label{fig:sub-domain-splitting}
\end{figure}

\subsection{Convergence of the covariance inverse estimator}
\label{subsec:proof-of-convergence}

In this section we prove the convergence of the $\BE^{-1}$ estimator in the context of data assimilation.

\begin{figure}[H]
\centering
\begin{subfigure}{0.5\textwidth}
\centering
\includegraphics[width=1\textwidth,height=0.8\textwidth]{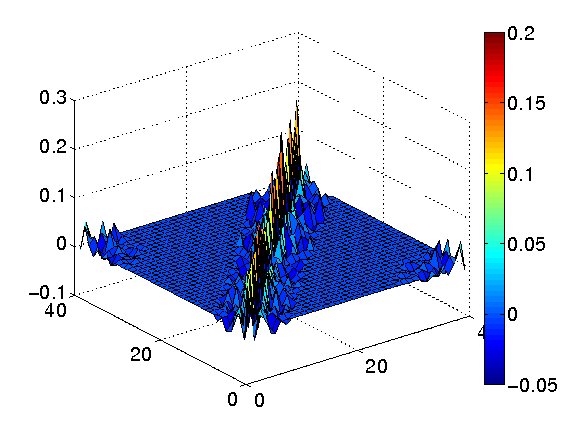}
\caption{Exact $\B^{-1} \approx {\P^b}^{-1}$ for $\Nens={10^5}$}
\end{subfigure}%
\begin{subfigure}{0.5\textwidth}
\centering
\includegraphics[width=1\textwidth,height=0.8\textwidth]{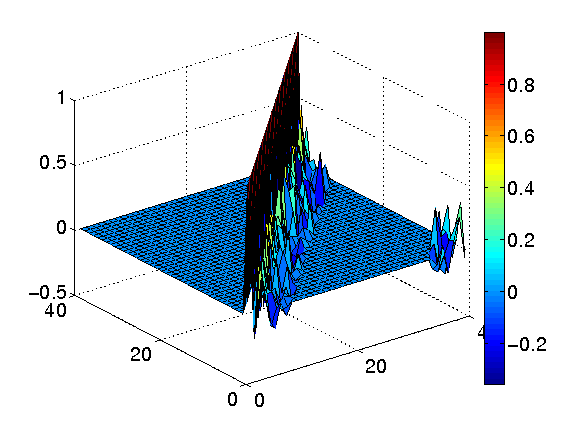}
\caption{$\T$, $\B^{-1} = \T^T \cdot \D^{-1} \cdot \T$}
\end{subfigure}

\begin{subfigure}{0.5\textwidth}
\centering
\includegraphics[width=1\textwidth,height=0.8\textwidth]{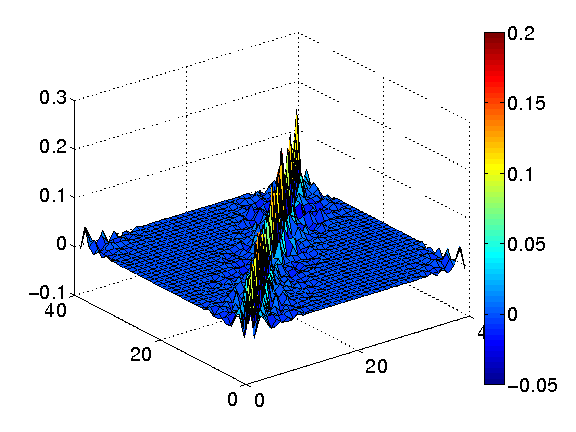}
\caption{Localized ensemble estimate $\widehat{\P^\textnormal{b}}^{-1}$}
\end{subfigure}%
\begin{subfigure}{0.5\textwidth}
\centering
\includegraphics[width=1\textwidth,height=0.8\textwidth]{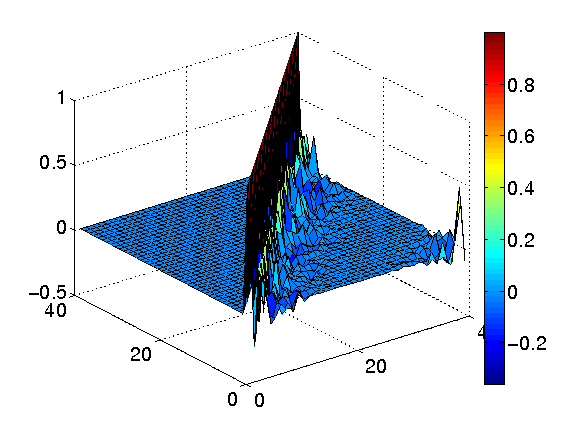}
\caption{$\T_{\bf L}$, $\widehat{\P^\textnormal{b}}^{-1} = \T_{\bf L}^T \cdot \D_{\bf L} \cdot \T_{\bf L}$}
\end{subfigure}%

\begin{subfigure}{0.5\textwidth}
\centering
\includegraphics[width=1\textwidth,height=0.8\textwidth]{figures/theo_PBLI.png}
\caption{Cholesky estimate $\BE^{-1}$}
\end{subfigure}%
\begin{subfigure}{0.5\textwidth}
\centering
\includegraphics[width=1\textwidth,height=0.8\textwidth]{figures/theo_PBLI_T.png}
\caption{$\TE$,  $\BE^{-1} = \TE^T \cdot \DE^{-1} \cdot \TE$}
\end{subfigure}
\caption{Decay of correlations in the Cholesky factors for different approximations of $\B^{-1}$. }
\label{fig:theo-Cholesky-factors}
\end{figure}
We consider a two-dimensional square domain with $s \times s$ grid points. Our proof below can be extended immediately to non-square domains, as well as to three-dimensional domains. In our domain each space point is described by two indices $(i,\,j)$, a zonal component $i$ and a meridional component $j$, for $1 \le i,j \le s$. A particular case for $s = 4$ is shown in Figure \ref{subfig:grid-distribution}. We make use of row-major order in order to map model grid components to the one dimensional ``index space'':
\begin{eqnarray*}
k = f(i,j) = (j-1) \cdot s+i,\,\quad \text{for $1 \le k \le \Nstate$}.\,
\end{eqnarray*}
where here, $\Nstate = s^2$. For a particular grid component $(i,\,j)$, the resulting $k = f(i,j)$ denotes the row index in $\BE^{-1}$. The results of labeling each model component in this manner can be seen in Figure \ref{subfig:row-major-order}.
\begin{figure}[H]
\centering
\begin{subfigure}{0.5\textwidth}
\centering
\fbox{\includegraphics[width=0.8\textwidth]{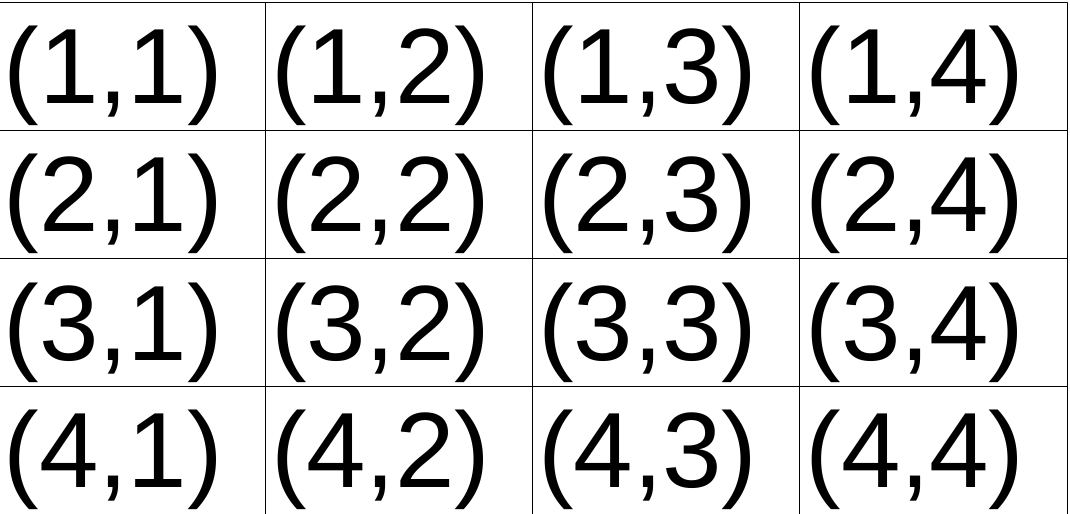}}
\caption{Grid components $(i,j)$}
\label{subfig:grid-distribution}
\end{subfigure}%
\begin{subfigure}{0.5\textwidth}
\centering
\fbox{\includegraphics[width=0.45\textwidth]{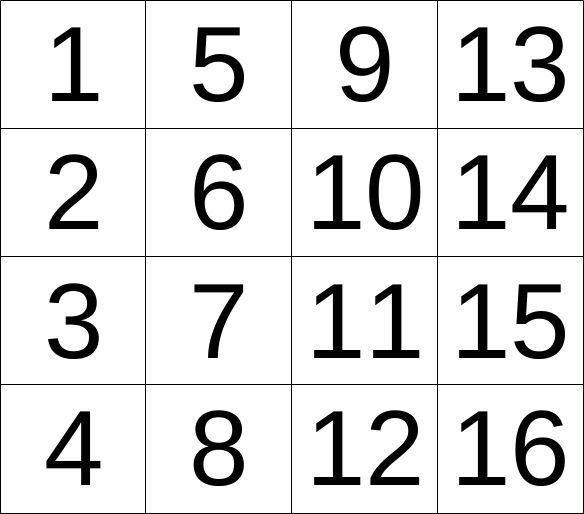}}
\caption{Index space $f(i,j)$}
\label{fig:corresponding-index-B}
\end{subfigure}
\caption{Grid distribution of model components and corresponding index terms in $\BE^{-1}$. }
\label{fig:ordering-example}
\end{figure}

To start our proof, the inverse of the (exact) background error covariance matrix $\B^{-1}$ and of the its estimator $\BE^{-1}$ can be written as 
\begin{subequations}
\begin{eqnarray}
\label{eq:proof-estimator-written}
\displaystyle
\BE^{-1} = \lb \I- \CE \rb^T \cdot \DE^{-1} \cdot \lb \I - \CE\rb \in \Re^{\Nstate \times \Nstate}
\end{eqnarray}
and
\begin{eqnarray}
\label{eq:proof-estimator-written}
\displaystyle
\B^{-1} = \lb \I - \CT\rb^T \cdot \D^{-1} \cdot \lb \I - \CT\rb \in \Re^{\Nstate \times \Nstate} ,\,
\end{eqnarray}
\end{subequations}
respectively, where $\CE = \I - \TE \in \Re^{\Nstate \times \Nstate}$ and $\CT = \I - \T \in \Re^{\Nstate \times \Nstate}$. Moreover,  $\D$ and $\DE$ are diagonal matrices:
\begin{eqnarray*}
\D &=& \diag \lle {d}_{1}^2,\, {d}_{2}^2,\, \ldots,\, {d}_{\Nstate}^2\rle  \\
\DE &=& \diag \lle \widehat{d}_{1}^2,\, \widehat{d}_{2}^2,\, \ldots,\, \widehat{d}_{\Nstate}^2\rle 
\end{eqnarray*}
where $\lle \D \rle_{i,i} = d_i^2$ and  $\lle \DE \rle_{i,i} = \widehat{d}_i^2$, for $1 \le i \le \Nstate$. In what follows we denote by $\CTc^{\{j\}} \in \Re^{\Nstate \times 1}$ and $\CTt^{\{j\}} \in \Re^{\Nstate \times 1}$ the $j$-th columns of matrices $\CE$ and $\CT$, respectively, for $1 \le j \le \Nstate$.

\begin{subequations}
\label{eq:Proof-theorem-1}

\begin{definition}[Class of matrices under consideration.] 
\label{theo-def}
We consider the class of covariance matrices matrices with correlations decreasing quickly:
\begin{eqnarray}
\label{eq:Proof-class-of-matrices}
\displaystyle
\classU^{-1} \lp \varepsilon_0, C, \alpha \rp &=&  \Bigg \{ \B: 0<\varepsilon_0 \le \lambda_{min} \lp \B \rp \le \lambda_{max} \lp \B \rp  \le \varepsilon_0^{-1},\,  \\ \nonumber
& & \underset{k}{\max} \sum_{\ell=1}^{\Nstate} \left | \gamma_{k,\ell} \cdot \lle \T \rle_{k,\ell} \right |  \le C \cdot {\ra}^{-\alpha},\, \text{ for $\ra \le s-1$}  \Bigg \}
\end{eqnarray}
where $\B^{-1} = \T^T \, \D^{-1} \, \T$, $\alpha$ is the decay rate (related to the dynamics of the numerical model),
\begin{eqnarray*} 
\gamma_{k_{(i,j)},\ell_{(p,q)}} &=& \begin{cases}
0 &  j-\ra \le q \le j-1 \text{ and } i-\ra \le p \le i+\ra \\
0 & q = j \text{ and } i-\ra \le p \le i \\
1 & \text{otherwise}
\end{cases} \,,
\end{eqnarray*}
and the grid components $(i,j)$ and $(p,q)$, for $1 \le i,j,p,q \le s$ are related to the $(k_{(i,j)},\ell_{(p,q)})$ matrix entry by $k_{(i,j)} = f(i,j)$ and $\ell = f(p,q)$.
\end{definition}
%

\begin{comment}
The factors $\gamma_{k,\ell}$ for the grid component $(i,j)$ in Definition \eqref{theo-def} are zero  inside the scope of $\ra$. 
\end{comment}

\begin{theorem}[Error in the covariance inverse estimation]
\label{theo-main}
Uniformly for $\B \in \classU^{-1} \lp \varepsilon_0, C, \alpha \rp$, if $\ra \approx  \lb \Nens^{-1} \cdot \log \Nstate \rb^{-1/2(\alpha+1)}$ and $\Nens^{-1} \cdot \log \Nstate = o(1)$,
\begin{eqnarray}
\label{eq:Proof-theorem}
\displaystyle
\ln \BEST^{-1} - \B^{-1}\rn_{\infty} = \mathcal{O} \lp \lb \frac{\log(\Nstate)}{\Nens}\rb^{\alpha(\alpha+1)/2}\rp 
\end{eqnarray}
where $\ln \cdot \rn_{\infty}$ denotes the infinity norm (matrix or vector)
\end{theorem}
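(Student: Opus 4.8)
The plan is to follow the template of Bickel and Levina \cite{bickel2008} for banded estimation of Cholesky factors, adapted to the two-dimensional localization encoded by the radius $\ra$. The starting point is the algebraic identity obtained from $\BEST^{-1}=\TE^T\cdot\DE^{-1}\cdot\TE$ and $\B^{-1}=\T^T\cdot\D^{-1}\cdot\T$,
\begin{eqnarray*}
\BEST^{-1}-\B^{-1} = \lp\TE-\T\rp^T\cdot\DE^{-1}\cdot\TE + \T^T\cdot\DE^{-1}\cdot\lp\TE-\T\rp + \T^T\cdot\lp\DE^{-1}-\D^{-1}\rp\cdot\T,
\end{eqnarray*}
which, after taking $\ln\cdot\rn_\infty$ and using submultiplicativity of the induced matrix norms (the transposed factors handled via $\ln \A^T\rn_\infty=\ln \A\rn_1$, the banded lower-triangular structure making column-sum control analogous to row-sum control), reduces the whole estimate to three scalar quantities: the factor error $\ln\TE-\T\rn_\infty$, the diagonal error $\ln\DE^{-1}-\D^{-1}\rn_\infty$, and the norms $\ln\T\rn_\infty$, $\ln\TE\rn_\infty$, $\ln\DE^{-1}\rn_\infty$. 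The last group contributes only multiplicative constants depending on $\varepsilon_0$: the spectral bounds $\varepsilon_0\le\lambda_{min}(\B)\le\lambda_{max}(\B)\le\varepsilon_0^{-1}$ of Definition \ref{theo-def} force the residual variances $d_i^2$ and their estimates into a fixed interval bounded away from $0$ and $\infty$, and they likewise bound the Cholesky factors.

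Each of the two genuine error terms is split into a localization (bias) part and a sampling (variance) part. Writing $\overline{\T}$ for the population Cholesky factor obtained by regressing every component only on its in-band predecessors (those within radius $\ra$), I would set $\TE-\T=(\TE-\overline{\T})+(\overline{\T}-\T)$. The bias $\overline{\T}-\T$ is the contribution of the omitted out-of-band predecessors and is controlled \emph{directly} by the decay hypothesis defining $\classU^{-1}$, since the factors $\gamma_{k,\ell}$ annihilate exactly the in-band entries:
\begin{eqnarray*}
\ln\overline{\T}-\T\rn_\infty \;\le\; \underset{k}{\max}\,\sum_{\ell=1}^{\Nstate}\lab \gamma_{k,\ell}\cdot\lle\T\rle_{k,\ell}\rab \;\le\; C\cdot\ra^{-\alpha},
\end{eqnarray*}
with the same tail governing the diagonal bias. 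This is where the structural assumption on $\B$ does all the work, reducing the bias to a single power of $\ra$.

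The sampling term $\TE-\overline{\T}$ carries the probabilistic content. Each row of $\TE$ solves the regularized least-squares problem \eqref{eq:EnKF-MC-solution-of-optimization-problem}, so I would express the per-row deviation $\be_{[i]}-\overline{\be}_{[i]}$ through a perturbation of the normal equations, i.e.\ in terms of the fluctuations of the sample blocks $\frac{1}{\Nens}\Z_{[i]}\cdot\Z_{[i]}^T$ and $\frac{1}{\Nens}\Z_{[i]}\cdot\x_{[i]}$ around their population counterparts. The spectral bounds make the population Gram matrix uniformly invertible, and the regularization (Tikhonov, or the truncated SVD \eqref{eq:EnKF-MC-truncated-SVD}) keeps the sample Gram matrix invertible with bounded inverse on the event that these fluctuations are small. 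For Gaussian ensembles the fluctuations obey standard large-deviation bounds at rate $\sqrt{\log\Nstate/\Nens}$ per entry, and a union bound over the $\Nstate$ components supplies the $\log\Nstate$ factor; this yields, on an event of probability tending to one, a sampling bound for $\ln\TE-\overline{\T}\rn_\infty$ that grows with the number of retained predecessors per row while decaying at the parametric rate, together with an analogous bound for $\ln\DE^{-1}-\D^{-1}\rn_\infty$ obtained from the concentration of the residual sums of squares in \eqref{eq:intro-diagonal-D}.

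Finally I would combine the pieces into $\ln\TE-\T\rn_\infty=\BO{\ra^{-\alpha}}+\BO{\text{(sampling)}}$ and its diagonal analogue, so that the full bound is a bias term $\BO{\ra^{-\alpha}}$ plus a sampling term that increases with $\ra$. Substituting the prescribed radius $\ra\approx\lb \Nens^{-1}\cdot\log\Nstate\rb^{-1/2(\alpha+1)}$ equalizes the two contributions and delivers the stated rate \eqref{eq:Proof-theorem}, while the assumption $\Nens^{-1}\cdot\log\Nstate=o(1)$ is precisely what renders the high-probability event asymptotically certain and keeps the regularized inverses bounded. I expect the main obstacle to be the \emph{uniform} control of the sampling term over all $\Nstate$ local regressions simultaneously: one must show at once that every regularized sample Gram matrix stays well-conditioned and that every regression coefficient and residual variance concentrates, with the union bound calibrated so the $\log\Nstate$ penalty is absorbed into the final rate rather than degrading it.
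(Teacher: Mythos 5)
Your proposal is correct and follows essentially the same route as the paper's proof: both adopt the Bickel--Levina template, controlling the sampling error of the regression coefficients and residual variances via Gaussian concentration (Lemma A.2 of \cite{bickel2008}) plus a union bound at rate $\Nens^{-1/2}\cdot\log^{1/2}\Nstate$, bounding the localization bias by $C\cdot\ra^{-\alpha}$ through the decay class of Definition \ref{theo-def}, propagating both through a perturbation expansion of the triple product $\TE^T\cdot\DE^{-1}\cdot\TE$ versus $\T^T\cdot\D^{-1}\cdot\T$, and balancing the two contributions with the prescribed choice of $\ra$. The only organizational difference is where you place the bias split --- you band the population Cholesky factor $\T$ itself via an intermediate $\overline{\T}$, whereas the paper bands $\B^{-1}$ directly through the operator $\Phi_{\ra}$ applied with $\MV = \T^T\cdot\D^{-1/2}$ --- which changes nothing essential.
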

\end{subequations}

\begin{comment}
The factors $\gamma_{k,\ell}$ in Theorem \eqref{theo-main} are zero for the predecessors of the grid component $(i,j)$ inside the scope of $\ra$. 
\end{comment}

In order to prove Theorem \ref{theo-main}, we need the following result. 
\begin{lemma}
\label{lemma:differences-emp-true}
Under the conditions of Theorem \ref{theo-main}, uniformly on $\classU^{-1}$
\begin{subequations}
\label{eq:proof-differences-empirical}
\begin{eqnarray}
\label{eq:proof-max-T}
\displaystyle
&&\max \lle \ln \CTc^{\{j\}}-\CTt^{\{j\}}  \rn_{\infty} : 1\le j \le \Nstate \rle = \BO{\Nens^{-1/2} \log^{1/2} \Nstate},\, \\ 
\label{eq:proof-max-D}
&&\max \lle \left| \widehat{d}_{j}^2 - d_{j}^2\right |: 1\le j \le \Nstate \rle = \BO{ \lb \Nens^{-1} \log \Nstate \rb^{\alpha/(2(\alpha+1))}} ,\,
\end{eqnarray}
and
\begin{eqnarray}
\label{eq:proof-norm1}
\displaystyle
\ln \CT \rn_{\infty} = \ln \D^{-1} \rn_{\infty} = \BO{1}.
\end{eqnarray}
\end{subequations}
\end{lemma}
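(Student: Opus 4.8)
The plan is to reduce all three estimates to a single concentration bound for the entries of the sample covariance matrix of the ensemble deviations $\U^\textnormal{b}$, and then to propagate that bound through the (smooth) maps that produce the regression coefficients in $\TE$ and the residual variances in $\DE$, using the uniform spectral bounds built into $\classU^{-1}$ to keep every matrix inverse well conditioned. The two different rates in \eqref{eq:proof-max-T} and \eqref{eq:proof-max-D} reflect two distinct error sources: pure sampling fluctuation, of order $(\Nens^{-1}\log\Nstate)^{1/2}$, and the bias incurred by regressing each component only on its local predecessors inside the radius $\ra$, of order $\ra^{-\alpha}$ by the decay hypothesis in \eqref{eq:Proof-class-of-matrices}. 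The engine of the proof is the bound $\max_{k,\ell}\lab\widehat\sigma_{k,\ell}-\sigma_{k,\ell}\rab=\BO{(\Nens^{-1}\log\Nstate)^{1/2}}$ holding with high probability, where $\sigma_{k,\ell}$ and $\widehat\sigma_{k,\ell}$ are the true and sample covariances between components $k$ and $\ell$; since the samples are (approximately) Gaussian, each centered product is sub-exponential, so a Bernstein-type tail estimate combined with a union bound over the $\BO{\Nstate^2}$ entries yields the rate, the $\log\Nstate$ factor being exactly the price of the union bound.

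The boundedness claims \eqref{eq:proof-norm1} follow almost for free from $\varepsilon_0\le\lambda_{min}(\B)\le\lambda_{max}(\B)\le\varepsilon_0^{-1}$: the residual variances $d_j^2$ are conditional variances of one coordinate given its predecessors, hence trapped between $\varepsilon_0$ and $\varepsilon_0^{-1}$, so $\ln\D^{-1}\rn_{\infty}=\max_j d_j^{-2}=\BO{1}$; and since the rows of $\CT=\I-\T$ are the population regression coefficients $\Sigma_{ZZ}^{-1}\Sigma_{Zx}$, the same spectral bounds cap their row sums and give $\ln\CT\rn_{\infty}=\BO{1}$. These are also precisely the tools needed for the remaining parts. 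For \eqref{eq:proof-max-T} I would write the empirical and population coefficient vectors for component $i$ as solutions of the sample and true normal equations and expand the deviation of the empirical coefficients from their population counterparts as $\widehat\be_{[i]}-\be_{[i]}=\widehat\Sigma_{ZZ}^{-1}\lb(\widehat\Sigma_{Zx}-\Sigma_{Zx})-(\widehat\Sigma_{ZZ}-\Sigma_{ZZ})\,\be_{[i]}\rb$. Since each local neighborhood has only $\BO{\ra^2}$ predecessors and $\Nens^{-1}\log\Nstate=o(1)$, the concentration bound makes $\widehat\Sigma_{ZZ}$ invertible and uniformly well conditioned with high probability, so that $\ln\widehat\Sigma_{ZZ}^{-1}\rn=\BO{1}$; combining this with the engine and $\ln\be_{[i]}\rn=\BO{1}$ bounds each entry of $\CE-\CT$ by $\BO{(\Nens^{-1}\log\Nstate)^{1/2}}$, and since $\max_j\ln\CTc^{\{j\}}-\CTt^{\{j\}}\rn_{\infty}$ is exactly this entrywise maximum, \eqref{eq:proof-max-T} follows.

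For \eqref{eq:proof-max-D} I would split $\widehat d_{j}^2-d_{j}^2$ into a sampling part and a truncation part. The residual variance is the Schur complement $\sigma_{jj}-\Sigma_{xZ}\Sigma_{ZZ}^{-1}\Sigma_{Zx}$, a smooth function of covariance entries and of the coefficients, so its sampling fluctuation is $\BO{(\Nens^{-1}\log\Nstate)^{1/2}}$ by the engine and \eqref{eq:proof-max-T}. The truncation contribution -- the conditional variance missed by dropping predecessors outside the radius -- is controlled by the decay hypothesis $\max_k\sum_{\ell}\lab\gamma_{k,\ell}\,\lle\T\rle_{k,\ell}\rab\le C\,\ra^{-\alpha}$, hence is $\BO{\ra^{-\alpha}}$. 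Substituting the balancing choice $\ra\approx(\Nens^{-1}\log\Nstate)^{-1/(2(\alpha+1))}$ gives $\ra^{-\alpha}=(\Nens^{-1}\log\Nstate)^{\alpha/(2(\alpha+1))}$, which dominates the sampling term because $\alpha/(2(\alpha+1))<1/2$, yielding the stated rate.

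The main obstacle I anticipate is the truncation-bias bookkeeping in \eqref{eq:proof-max-D} together with making every estimate uniform over $\classU^{-1}$ and over all $\Nstate$ components simultaneously. One must keep carefully separate the error of fitting a local regression versus the true full regression, track it through the indicator factors $\gamma_{k,\ell}$, and verify that the number of predecessors $\BO{\ra^2}$ growing with $\ra$ does not inflate the union bound beyond the $\log\Nstate$ factor. A related technical point is that $\widehat\Sigma_{ZZ}$ can be near-singular when the ensemble is small -- the very reason the method resorts to Tikhonov or truncated-SVD regularization -- so one must check that restricting to the high-probability well-conditioned event, or the regularization itself, does not degrade the rates.
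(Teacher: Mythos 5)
Your proposal is correct, and its skeleton coincides with the paper's proof, which is an adaptation of Bickel and Levina: your concentration ``engine'' (centered Gaussian products are sub-exponential, a Bernstein-type tail, a union bound over the $\BO{\Nstate^2}$ entries with $\log\Nstate$ the price of the union bound) is exactly what the paper imports wholesale as Lemma \ref{lemma:lemma-Bickel} to obtain $\| \var (\U^\textnormal{b}) - \evar(\U^\textnormal{b}) \|_{\infty} = \BO{\Nens^{-1/2}\log^{1/2}\Nstate}$, and your normal-equations identity $\widehat\be_{[i]}-\be_{[i]}=\widehat{\boldsymbol{\Sigma}}_{ZZ}^{-1}\lb(\widehat{\boldsymbol{\Sigma}}_{Zx}-\boldsymbol{\Sigma}_{Zx})-(\widehat{\boldsymbol{\Sigma}}_{ZZ}-\boldsymbol{\Sigma}_{ZZ})\,\be_{[i]}\rb$ is a more careful rendering of the paper's one-line subtraction of $\var(\Z_{[i]})^{-1}\Z_{[i]}\x_{[i]}$ from $\evar(\Z_{[i]})^{-1}\Z_{[i]}\x_{[i]}$; the high-probability well-conditioning of $\evar(\Z_{[i]})$ that you make explicit is left implicit in the paper, so that step of yours fills a real gap. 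The one genuinely different piece of bookkeeping is \eqref{eq:proof-max-D}. You derive the exponent $\alpha/(2(\alpha+1))$ from the localization bias: comparing against the residual variance of the \emph{full} population regression, the dropped out-of-radius predecessors cost $\BO{\ra^{-\alpha}}$ by the decay hypothesis, which after the balancing choice of $\ra$ equals $(\Nens^{-1}\log\Nstate)^{\alpha/(2(\alpha+1))}$ and dominates the sampling term since $\alpha/(2(\alpha+1))<1/2$. The paper does the opposite: it quietly takes both $\widehat{d}_i^2$ and $d_i^2$ to be residual variances of the \emph{same} $\tilde{\gamma}$-masked local regression, so no truncation bias appears in the lemma at all; the dominant contribution is then the sampling error inflated by $\sum_j \tilde{\gamma}_{i,j}^2 = \BO{\ra^2}$, giving $\BO{\ra^2 \, \Nens^{-1}\log\Nstate} = \BO{(\Nens^{-1}\log\Nstate)^{\alpha/(\alpha+1)}}$, which is of smaller order than the stated rate, while the $\BO{\ra^{-\alpha}}$ bias is deferred to the proof of Theorem \ref{theo-main} via \eqref{eq:Proof-rate-radius}. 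Both accountings land on the claimed bound; yours is arguably more faithful to the lemma as literally stated (with $d_j^2$ coming from $\B^{-1}=\T^T\,\D^{-1}\,\T$), whereas the paper's keeps the lemma a pure sampling-error statement and isolates all localization bias in a single place in the theorem.

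One caution on \eqref{eq:proof-norm1}: the spectral bounds alone give only $\|\be_{[i]}\|_2=\BO{1}$, hence row sums of $\CT$ of order $\BO{\ra}$ (the window contains $\BO{\ra^2}$ predecessors), not $\BO{1}$. To cap $\| \CT \|_{\infty}$ you must also invoke the decay condition of Definition \ref{theo-def}, which holds for \emph{every} radius up to $s-1$; applying it with a radius of order one controls all but boundedly many entries per row, and the spectral bounds control those. So your ``almost for free'' claim needs this extra ingredient -- though the paper is at least as terse at this very point.
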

The proof of Lemma \ref{lemma:differences-emp-true} is based on the following results of Bickel and Levina in \cite{bickel2008}.
\begin{lemma}
\label{lemma:lemma-Bickel}[\cite[Lemma A.2]{bickel2008}]
Let $\errbac^{[k]}  \sim \Nor \lp \zero,\, \B\rp$ and $\lambda_{\max} \lp \B \rp \le \varepsilon_0^{-1} < \infty$, for $1 \le k \le \Nens$. Then, if $\lle \B \rle_{i,j}$ denotes the $(i,\,j)$-th component of $\B$, for $1 \le i \le j \le \Nstate$, 
\begin{eqnarray}
\label{eq:lemma-Bickel}
\displaystyle
&& \textnormal{Prob} \lb \sum_{k=1}^{\Nens} \lb \lle \errbac^{[k]}  \rle_i \cdot \lle \errbac^{[k]} \rle_j - \lle \B\rle_{i,j} \rb \ge \Nens \cdot \nu \rb \\
\nonumber
&& \qquad \le C_1 \cdot \exp \lp -C_2 \cdot \Nens \cdot \nu^2 \rp,\,
\end{eqnarray}
for $|\nu| \le \delta$, where $\lle \errbac^{[k]} \rle_i$ is the $i$-th component of the sample $\errbac^{[k]}$, for $1 \le k \le \Nens$, and $1 \le i \le \Nstate$. Likewise, $C_1$, $C_2$ and $\delta$ depend on $\varepsilon_0$ only.
\end{lemma}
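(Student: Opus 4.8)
The plan is to prove this as a Cram\'er--Chernoff concentration bound for an i.i.d.\ sum whose summands $\lle \errbac^{[k]} \rle_i \cdot \lle \errbac^{[k]} \rle_j - \lle \B \rle_{i,j}$ are centered products of two (correlated) Gaussian coordinates. Writing $X_k = \lle \errbac^{[k]} \rle_i$ and $Y_k = \lle \errbac^{[k]} \rle_j$, the pair $(X_k,Y_k)$ is bivariate normal with zero mean and covariance $\boldsymbol{\Sigma} = \begin{pmatrix} \lle\B\rle_{i,i} & \lle\B\rle_{i,j} \\ \lle\B\rle_{i,j} & \lle\B\rle_{j,j}\end{pmatrix}$. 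First I would apply the exponential Markov inequality and factor the moment generating function using independence across $k$: for $t>0$,
\begin{eqnarray*}
\textnormal{Prob}\lb \sum_{k=1}^{\Nens} \lp X_k Y_k - \lle\B\rle_{i,j}\rp \ge \Nens \cdot \nu\rb \le e^{-t\Nens\nu}\lp \expect\lb e^{t(X_1 Y_1 - \lle\B\rle_{i,j})}\rb\rp^{\Nens}.
\end{eqnarray*}

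Next I would compute the moment generating function of the product $X_1Y_1$ in closed form. Since $X_1Y_1 = \tfrac{1}{2}(X_1,Y_1)\,\mathbf{J}\,(X_1,Y_1)^T$ is a quadratic form in a Gaussian vector, with $\mathbf{J}$ the $2\times 2$ exchange matrix, the standard Gaussian quadratic-form identity gives
\begin{eqnarray*}
\expect\lb e^{t X_1 Y_1}\rb = \det\lp \I - t\,\boldsymbol{\Sigma}\,\mathbf{J}\rp^{-1/2} = \lp 1 - 2t\lle\B\rle_{i,j} - t^2 D_{ij}\rp^{-1/2},
\end{eqnarray*}
valid whenever $1 - 2t\lle\B\rle_{i,j} - t^2 D_{ij}>0$, where $D_{ij} = \det\boldsymbol{\Sigma} = \lle\B\rle_{i,i}\lle\B\rle_{j,j} - \lle\B\rle_{i,j}^2 \ge 0$ is the determinant of a principal submatrix of the PSD matrix $\B$. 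The cumulant generating function is then $\psi(t) = -t\lle\B\rle_{i,j} - \tfrac{1}{2}\log\lp 1 - 2t\lle\B\rle_{i,j} - t^2 D_{ij}\rp$; one checks directly that $\psi(0)=\psi'(0)=0$, and $\psi''(0) = \lle\B\rle_{i,i}\lle\B\rle_{j,j}+\lle\B\rle_{i,j}^2$ recovers the Wick variance of $X_1Y_1$ as a sanity check.

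The crucial step, and the place where the hypothesis $\lambda_{\max}(\B)\le\varepsilon_0^{-1}$ enters, is to bound $\psi$ \emph{uniformly over the whole class} by a purely quadratic function near the origin. The spectral bound forces $|\lle\B\rle_{i,j}|\le\varepsilon_0^{-1}$ for all $i,j$ (diagonal entries are at most $\lambda_{\max}$, off-diagonal entries at most $\sqrt{\lle\B\rle_{i,i}\lle\B\rle_{j,j}}$ by Cauchy--Schwarz) and hence $0\le D_{ij}\le\varepsilon_0^{-2}$. Therefore there is a radius $t_0$ depending on $\varepsilon_0$ alone such that for $|t|\le t_0$ the denominator $1 - 2t\lle\B\rle_{i,j} - t^2 D_{ij}$ stays above $1/2$; on this interval $\psi''$ is continuous and bounded by a constant $2K=2K(\varepsilon_0)$, so Taylor's theorem with $\psi(0)=\psi'(0)=0$ yields $\psi(t)\le K t^2$ for all $|t|\le t_0$, uniformly over $\B\in\classU^{-1}$. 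Substituting back produces the exponent $-\Nens(t\nu - Kt^2)$.

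Finally I would optimize the Chernoff exponent in $t$. The unconstrained maximizer is $t^\star=\nu/(2K)$, which lies in $[0,t_0]$ precisely when $\nu\le 2Kt_0=:\delta$; this is exactly the origin of the restriction $|\nu|\le\delta$, with $\delta$ depending only on $\varepsilon_0$. For $0\le\nu\le\delta$ this choice gives
\begin{eqnarray*}
\textnormal{Prob}\lb \sum_{k=1}^{\Nens}\lp X_k Y_k - \lle\B\rle_{i,j}\rp \ge \Nens\cdot\nu\rb \le \exp\lp -\frac{\Nens\,\nu^2}{4K}\rp,
\end{eqnarray*}
which is the claimed bound with $C_2=1/(4K)$ and $C_1=1$; the lower tail follows identically upon replacing $t$ by $-t$. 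I expect the main obstacle to be this uniform control of the cumulant generating function: one must verify that the quadratic-form MGF formula holds on a $t$-interval that does not shrink as $\B$ ranges over $\classU^{-1}$, and that the second-derivative bound $K$ can be taken independent of the particular entries $\lle\B\rle_{i,i},\lle\B\rle_{j,j},\lle\B\rle_{i,j}$ --- both of which reduce to the single spectral constraint $\lambda_{\max}(\B)\le\varepsilon_0^{-1}$, so that $C_1$, $C_2$, and $\delta$ depend on $\varepsilon_0$ only.
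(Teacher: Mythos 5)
Your proof is correct, but there is nothing in the paper to compare it against: the paper does not prove this lemma, it imports it verbatim (with a citation to \cite[Lemma A.2]{bickel2008}) and uses it as a black box in the proof of Lemma \ref{lemma:differences-emp-true}. What you have written is, in substance, the standard Chernoff-type argument that underlies the cited result of Bickel and Levina: the exchange-matrix representation $X_1Y_1=\tfrac12(X_1,Y_1)\,\mathbf{J}\,(X_1,Y_1)^T$, the closed-form moment generating function $\lp 1-2t\lle\B\rle_{i,j}-t^2D_{ij}\rp^{-1/2}$, and the observation that the single hypothesis $\lambda_{\max}(\B)\le\varepsilon_0^{-1}$ controls $\lle\B\rle_{i,i}$, $|\lle\B\rle_{i,j}|$ and $D_{ij}$ uniformly, are all correct (your determinant computation and the check $\psi''(0)=\lle\B\rle_{i,i}\lle\B\rle_{j,j}+\lle\B\rle_{i,j}^2$ both verify), and your derivation of $\delta=2Kt_0$ correctly explains why the restriction $|\nu|\le\delta$ appears and why $C_1$, $C_2$, $\delta$ depend on $\varepsilon_0$ alone. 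Two small remarks. First, as transcribed in this paper the lemma bounds the one-sided event $\sum_k\lb\lle\errbac^{[k]}\rle_i\lle\errbac^{[k]}\rle_j-\lle\B\rle_{i,j}\rb\ge\Nens\cdot\nu$ for $|\nu|\le\delta$, which is literally false for $\nu<0$ (the event then has probability near one while the right-hand side is small); the original Bickel--Levina statement carries an absolute value, and your two-sided version (upper tail plus the $t\mapsto -t$ lower tail, giving $C_1=2$) proves exactly that corrected form, which is also the form actually invoked later in the paper. Second, the condition $1-2t\lle\B\rle_{i,j}-t^2D_{ij}>0$ by itself is the determinant condition and does not globally certify validity of the Gaussian quadratic-form identity (both eigenvalue factors must be positive, not merely their product); this is harmless in your argument because you work on the interval $|t|\le t_0$ where the determinant stays above $1/2$ and continuity from its value $1$ at $t=0$ keeps both factors positive, but it is worth stating explicitly.
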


\begin{proof}[Proof of Lemma \ref{lemma:differences-emp-true}] 

In what follows we denote by $\var$ and $\evar$ denote the true and the empirical covariances, respectively. In the context of EnKF we have that $\var \lp \U^\textnormal{b} \rp = \B$.

Recall that
\begin{eqnarray*}
\evar \lp \U^\textnormal{b} \rp = \P^\textnormal{b} = \frac{1}{\Nens-1} \cdot \U^\textnormal{b} \cdot {\U^\textnormal{b}}^T = \frac{1}{\Nens-1} \cdot \sum_{k=1}^{\Nens} \ub^{\textnormal{b}[k]} \cdot {\ub^{\textnormal{b}[k]}}^T ,\,
\end{eqnarray*}
and therefore
\begin{eqnarray*}
\lle \evar \lp \U^\textnormal{b} \rp \rle_{i,j} = \frac{1}{\Nens-1} \cdot \sum_{k=1}^{\Nens} \lle \ub^{\textnormal{b}[k]} \rle_{i} \cdot \lle \ub^{\textnormal{b}[k]} \rle_{j}.
\end{eqnarray*}
For $\nu>0$, $\lle \errbac^{[k]}  \rle_i \cdot \lle \errbac^{[k]} \rle_j - \lle \B\rle_{i,j}  \ge \Nens \cdot \nu$ implies $\lle \errbac^{[k]}  \rle_i \cdot \lle \errbac^{[k]} \rle_j - \lle \B\rle_{i,j}  \ge (\Nens-1 )\cdot \nu$, and therefore by Lemma \ref{lemma:lemma-Bickel} we have:
\begin{subequations}
\begin{eqnarray}
\label{eq:proof-bound-U}
\ln \var \lp \U^\textnormal{b} \rp - \evar \lp  \U^\textnormal{b}  \rp \rn_{\infty} = \BO {\Nens^{-1/2} \cdot \log^{1/2} \Nstate },\,
\end{eqnarray}
since the entries of $\var \lp \U^\textnormal{b} \rp - \evar \lp  \U^\textnormal{b}  \rp$ can be bounded by:
\begin{eqnarray*}
\left | \lle \var \lp \U^\textnormal{b} \rp - \evar \lp  \U^\textnormal{b}  \rp \rle_{i,j} \right | & \le & \Nens^{-1} \cdot \sum_{k=1}^\Nens \left |  \lle \ub^{\textnormal{b}[k]} \rle_i \cdot \lle \ub^{\textnormal{b}[k]} \rle_j - \lle \B \rle_{i,j} \right |.
\end{eqnarray*}
Lemma \ref{lemma:lemma-Bickel} ensures that:
\begin{eqnarray*}
&& \textnormal{Prob} \lb \underset{i,j}{\max}  \left | \Nens^{-1} \cdot \sum_{k=1}^\Nens  \lle \ub^{\textnormal{b}[k]} \rle_i \cdot \lle \ub^{\textnormal{b}[k]} \rle_j - \lle \B \rle_{i,j}  \right | \ge \nu \rb \\
&& \qquad \le C_1 \cdot \Nstate^2 \cdot \exp \lp  -C_2 \cdot \Nens \cdot \nu^2 \rp,\,
\end{eqnarray*} 
for $|\nu| \le \delta$. Let $\nu = \lp \frac{\log \Nstate^2}{ \Nens \cdot C_2} \rp^{1/2}\cdot M$, for $M$ arbitrary. 

Since $\Z_{[i]}$ stores the columns of $\U^\textnormal{b}$ corresponding to the predecessors of model component $i$, an immediate consequence of \eqref{eq:proof-bound-U} is
\begin{eqnarray}
\label{eq:proof-bound-Z}
\underset{i}{\max} \ln \var \lp \Z_{[i]} \rp -\evar \lp \Z_{[i]} \rp \rn_{\infty} = \BO{ \Nens^{-1/2} \cdot \log^{1/2} \Nstate} \,.
\end{eqnarray}
\end{subequations}
%
Also,
\begin{eqnarray*}
\ln \B^{-1} \rn_{\infty} = \ln \var \lp \U^\textnormal{b} \rp^{-1} \rn_{\infty} \le \varepsilon_{0}^{-1} \,.
\end{eqnarray*}
According to equation \eqref{eq:EnKF-MC-solution-of-optimization-problem}, 
\begin{eqnarray*}
\lle \CTt^{[i]} \rle_{j} &=& \lle \var \lp \Z_{[i]} \rp^{-1} \cdot \Z_{[i]} \cdot \x_{[i]} \rle_j \,,\\
\lle \CTc^{[i]} \rle_{j} &=& \lle \evar \lp \Z_{[i]} \rp^{-1} \cdot \Z_{[i]} \cdot \x_{[i]} \rle_j  \,,
\end{eqnarray*}
therefore:
\begin{eqnarray} \nonumber
&&\underset{k}{\max} \lab \lle \CTt^{[i]} \rle_{k}-\lle \CTc^{[i]} \rle_{k} \rab \\
&=& \underset{k}{\max} \lab \lle \var \lp \Z_{[i]} \rp^{-1} \cdot \Z_{[i]} \cdot \x_{[i]} \rle_k -  \lle \evar \lp \Z_{[i]} \rp^{-1} \cdot \Z_{[i]} \cdot \x_{[i]} \rle_k  \rab \\ \nonumber
&=& \underset{k}{\max} \lab \lle \lb \var \lp \Z_{[i]} \rp^{-1}  - \evar \lp \Z_{[i]} \rp^{-1} \rb \cdot \Z_{[i]} \cdot \x_{[i]} \rle_k  \rab \\ \label{eq:proof-parta}
&=& \mathcal{O} \lp \Nens^{-1/2} \cdot \log^{1/2} \Nstate \rp 
\end{eqnarray}
from which \eqref{eq:proof-max-T} follows. Note that:
\begin{eqnarray*}
&& \x_{[i]} = \sum_{j=1}^{\Nstate} \tilde{\gamma}_{i,j} \cdot \lle \CTc^{[i]} \rle_j \cdot \x_{[j]} + \widehat{\err}^{[i]} \\
&\Leftrightarrow & \evar \lp \x_{[i]} \rp =  \evar \lp \sum_{j=1}^{\Nstate} \tilde{\gamma}_{i,j} \cdot \lle \CTc^{[i]} \rle_j \cdot \x_{[j]} + \widehat{\err}^{[i]} \rp \\
&\Leftrightarrow & \evar \lp \x_{[i]} \rp =  \evar \lp \sum_{j=1}^{\Nstate} \tilde{\gamma}_{i,j} \cdot \lle \CTc^{[i]} \rle_j \cdot \x_{[j]} \rp + \evar \lp \widehat{\err}^{[i]} \rp \\
&\Leftrightarrow & \widehat{d}^2_i = \evar \lp \x_{[i]}\rp -  \evar \lp \sum_{j=1}^{\Nstate} \tilde{\gamma}_{i,j} \cdot \lle \CTc^{[i]} \rle_j \cdot \x_{[j]} \rp ,\,
\end{eqnarray*}
and similarly 
\begin{eqnarray*}
d^2_i = \var \lp \x_{[i]}\rp -  \var \lp \sum_{j=1}^{\Nstate} \tilde{\gamma}_{i,j} \cdot \lle \CTt^{[i]} \rle_j \cdot \x_{[j]} \rp \,.
\end{eqnarray*}

The claim \eqref{eq:proof-max-D} and the first part of \eqref{eq:proof-norm1} follow from \eqref{eq:proof-bound-U}, \eqref{eq:proof-bound-Z} and \eqref{eq:proof-parta}. Since
%
\begin{eqnarray} \nonumber
\lab \de_{i}^2 - d_{i}^2 \rab &\le & \lab \var \lp \x_{[i]}\rp - \evar \lp \x_{[i]}\rp \rab \\ 
&+& \lab \evar \lp \sum_{j=1}^{\Nstate} \tilde{\gamma}_{i,j} \cdot \lb \lle {\CTc}^{[i]} \rle_j - \lle {\CTt}^{[i]} \rle_j \rb \cdot \x_{[j]} \rp \rab \\ \nonumber
&+& \lab \evar \lp \sum_{j=1}^{\Nstate} \tilde{\gamma}_{i,j} \cdot \lle \CTc^{[i]} \rle_j \cdot \x_{[j]} \rp - \var \lp \sum_{j=1}^{\Nstate} \tilde{\gamma}_{i,j} \cdot \lle \CTc^{[i]}\rle_j \cdot \x_{[j]} \rp \rab
\end{eqnarray}
where $\tilde{\gamma}_{i,j} =  1-{\gamma}_{i,j}$. By Lemma \ref{lemma:lemma-Bickel} the maximum over $i$ of the first term is:
\begin{eqnarray*}
\displaystyle 
\underset{i}{\max} \lab \var \lp \x_{[i]}\rp - \evar \lp \x_{[i]}\rp \rab = \BO{\Nens^{-1/2} \cdot \log^{1/2} \Nstate }.\,
\end{eqnarray*}
The second term can be bounded as follows:
\begin{eqnarray*} 
&& {\tiny \lab \sum_{j=1}^{\Nstate} \tilde{\gamma}_{i,j}^2 \cdot \lb \lle {\CTc}^{[i]} \rle_j - \lle {\CTt}^{[i]} \rle_j \rb^2 \cdot \evar\lp \x_{[j]} \rp \rab}   \\
& \le &  \sum_{j=1}^{\Nstate} \tilde{\gamma}_{i,j}^2 \cdot \lb \lle {\CTc}^{[i]} \rle_j - \lle {\CTt}^{[i]} \rle_j \rb^2 \cdot \lab \evar\lp \x_{[j]} \rp \rab  \\
& \le &  \underset{k}{\max} \lb \lle {\CTc}^{[i]} \rle_k - \lle {\CTt}^{[i]} \rle_k \rb^2 \cdot \underset{i}{\max}\lab \evar\lp \x_{[i]} \rp \rab \cdot \sum_{j=1}^{\Nstate} \tilde{\gamma}_{i,j}^2   \\
&=&\BO{ \ra^2 \cdot \Nens^{-1} \cdot \log \Nstate } \\  \nonumber
&=& \BO{ \lb \Nens^{-1} \cdot \log \Nstate \rb^{\alpha/2 \cdot (\alpha+1)} }
%
\end{eqnarray*}
by \eqref{eq:proof-max-T} and $\ln \B\rn \le \varepsilon_0^{-1}$. Recall that $\ra = \lb \Nens^{-1} \cdot \log \Nstate \rb^{1/2 \cdot(\alpha+1)}$ and even more, note that:
\begin{eqnarray*}
\sum_{j=1}^{\Nstate} \tilde{\gamma}^2_{i,j} = \frac{\lb \ra+1 \rb^2}{2} = \frac{\ra^2}{2}+\ra+\frac{1}{2} = \BO{\ra^2} \,.
\end{eqnarray*}
The third term can be bounded similarly. Thus \eqref{eq:proof-max-D} follows. Furthermore, 
\begin{eqnarray*}
\displaystyle
d^2_{i} = \var \lp \x_{[i]} - \sum_{j=1}^{\Nstate} \tilde{\gamma}_{i,j} \cdot \lle \CTc^{[i]}\rle_j \cdot \x_{[j]} \rp \ge \varepsilon_0 \cdot \lp 1 + \sum_{i=1}^{\Nstate} \lb \CTc^{[i]}_j \rb^2 \rp \ge \varepsilon_0 \,,
\end{eqnarray*}
and the lemma follows.
\end{proof}
We now are ready to prove Theorem \ref{theo-main}.
\begin{proof}[Proof of Theorem \ref{theo-main}]
We need only check that:
\begin{subequations}
\label{eq:Proof-to-check}
\begin{eqnarray}
\label{eq:Proof-difference-of-inverses}
\displaystyle
\ln \BEST^{-1} - \B^{-1} \rn_{\infty} = \BO{ \Nens^{-1/2} \cdot \log^{1/2} \lp \Nstate \rp}
\end{eqnarray}
and
\begin{eqnarray}
\label{eq:Proof-rate-radius}
\displaystyle
\ln \B^{-1} - \Phi_{\ra} \lp \B^{-1} \rp\rn_{\infty} = \BO{\ra^{-\alpha}}
\end{eqnarray}
where the entries of $\Phi_{\ra} \lp \B^{-1} \rp$ are given by:
\begin{eqnarray}
\lle \Phi_{\ra} \lp \B^{-1} \rp \rle_{k,\ell} = \delta_{k,\ell} \cdot \lle \B^{-1} \rle_{k,\ell} ,\, \text{ for $1 \le k,\ell \le \Nstate$}
\end{eqnarray}
where $k = f(i,j)$  and $\ell = f(q,p)$ for $1 \le i,j,p,q \le s$, and
\begin{eqnarray*} 
\delta_{k,\ell} &=& \begin{cases}
1 &  j-\ra \le q \le j+\ra \text{ and } i-\ra \le p \le i+\ra \\
0 & \text{otherwise}
\end{cases}
\end{eqnarray*}
\end{subequations}
We first prove \eqref{eq:Proof-difference-of-inverses}. By definition,
\begin{eqnarray}
\BEST^{-1} - \B^{-1} = \BESTT^T \cdot \BESTD^{-1} \cdot \BESTT - \T^T \cdot \D^{-1} \cdot \T.
\end{eqnarray}
Applying the standard inequality:
\begin{eqnarray*}
\ln \T^T \cdot \D^{-1} \cdot \T -\TE^T \cdot \DE^{-1} \cdot \TE^T\rn &\le & \ln \T^T- \TE^T \rn \cdot \ln \DE \rn \cdot \ln \TE \rn \\
&+& \ln \D- \DE \rn \cdot \ln \TE^T \rn \cdot \ln \TE \rn \\
&+& \ln \T- \TE \rn \cdot \ln \TE \rn \cdot \ln \DE \rn \\
&+& \ln \TE \rn  \cdot \ln \D-\DE \rn \cdot \ln \TE^T-\T^T \rn \\
&+& \ln \DE \rn  \cdot \ln \T-\TE \rn \cdot \ln \TE^T-\T^T \rn \\
&+& \ln \TE^T \rn  \cdot \ln \D-\DE \rn \cdot \ln \TE-\T \rn \\
&+& \ln \D-\DE \rn \cdot \ln \T-\TE \rn \cdot \ln \T^T-\TE^T \rn
\end{eqnarray*}
%
all previous terms can be bounded making use of Lemma \ref{lemma:differences-emp-true} and therefore, \eqref{eq:Proof-difference-of-inverses} follows. Likewise, for \eqref{eq:Proof-rate-radius}, we need to note that for any matrix $\MV$,
\begin{eqnarray*}
\ln \MV \cdot \MV^T  - \Phi_{\ra} \lp \MV \rp \cdot \Phi_{\ra} \lp \MV \rp^T \rn_{\infty} & \le & 2 \cdot \ln \MV \rn_{\infty} \cdot \ln \Phi_{\ra} \lp \MV \rp-\MV^{-1} \rn_{\infty} \\
&+&\ln \Phi_{\ra} \lp \MV \rp-\MV \rn_{\infty}^2 
\end{eqnarray*}
and by letting $\MV = \T^T \cdot \D^{-1/2}$, the theorem follows from Definition \ref{theo-def}.
\end{proof}

\section{Numerical Experiments}
\label{sec:experimental-settings}

In this section we study the performance of the proposed EnKF-MC implementation. The experiments are performed using the atmospheric general circulation model SPEEDY \cite{Speedy1,Speedy2}. SPEEDY is a hydrostatic, spectral coordinate, spectral transform model in the vorticity-divergence form, with semi-implicit treatment of gravity waves. The number of layers in the SPEEDY model is 8 and the T-63 model resolution ($192 \times 96$ grids)  is used for the horizontal space discretization of each layer. Four model variables are part of the assimilation process: the temperature ($K$), the zonal and the meridional wind components ($m/s$), and the specific humidity ($g/kg$). The total number of model components is $\Nstate = 589,824$. The number of ensemble members is $\Nens=94$ for all the scenarios. The model state space is approximately 6,274 times larger than the number of ensemble members ($\Nstate \gg \Nens$). 

Starting with the state of the system  $\x^\textnormal{ref}_{-3}$ at time $t_{-3}$, the model solution $\x^\textnormal{ref}_{-3}$ is propagated in time over one year:
\begin{eqnarray*}
\x^\textnormal{ref}_{-2} = \M_{t_{-3} \rightarrow t_{-2}} \lp \x^\textnormal{ref}_{-3}\rp.
\end{eqnarray*}
The reference solution $\x^\textnormal{ref}_{-2}$ is used to build a perturbed background solution:
\begin{eqnarray}
\label{eq:exp-perturbed-background}
\displaystyle
\widehat{\x}^\textnormal{b}_{-2} = \x^\textnormal{ref}_{-2} + \errobs^\textnormal{b}_{-2}, \quad  \errobs^\textnormal{b}_{-2} \sim \Nor \lp \zero_{\Nstate} ,\, \underset{i}{\textnormal{diag}} \left\{ (0.05\, \{\x^\textnormal{ref}_{-2}\}_i)^2 \right\} \rp.
\end{eqnarray}
The perturbed background solution is propagated over another year to obtain the background solution at time $t_{-1}$:
\begin{eqnarray}
\label{eq:exp-background-state-1}
\x^\textnormal{b}_{-1} = \M_{t_{-2} \rightarrow t_{-1}} \lp \widehat{\x}^\textnormal{b}_{-2}\rp.
\end{eqnarray}
This model propagation attenuates the random noise introduced in \eqref{eq:exp-perturbed-background} and makes the background state \eqref{eq:exp-background-state-1} consistent with the physics of the SPEEDY model. Then, the background state \eqref{eq:exp-background-state-1} is utilized in order to build an ensemble of perturbed background states:
\begin{eqnarray}
\label{eq:exp-perturbed-ensemble}
\displaystyle
\widehat{\x}^{\textnormal{b}[i]}_{-1}  = \x^\textnormal{b}_{-1} + \errobs^\textnormal{b}_{-1},\quad \errobs^\textnormal{b}_{-1} \sim \Nor \lp \zero_{\Nstate} ,\, \underset{i}{\textnormal{diag}} \left\{ (0.05\, \{\x^\textnormal{b}_{-1}\}_i)^2 \right\} \rp,
\quad 1 \le i \le \Nens,
\end{eqnarray}
from which, after three months of model propagation, the initial ensemble is obtained at time $t_0$:
\begin{eqnarray*}
\x^{\textnormal{b}[i]}_0 = \M_{t_{-1} \rightarrow t_0} \lp \widehat{\x}^{\textnormal{b}[i]}_{-1}\rp \,.
\end{eqnarray*}
Again, the model propagation of the perturbed ensemble ensures that the ensemble members are consistent with the physics of the numerical model. 

The experiments are performed over a period of 24 days, where observations are taken every 2 days ($\N=12$). At time $k$ synthetic observations are built as follows:
\begin{eqnarray*}
\y_k = \H_k \cdot \x^\textnormal{ref}_k + \errobs_k, \quad \errobs_k \sim \Nor \lp \zeros_{\Nobs},\, \R_k \rp,\,
\quad \R_k = \textnormal{diag}_{i}\left\{ (0.01\, \{\H_k \, \x^\textnormal{ref}_k\}_i )^2  \right\}.
\end{eqnarray*}
The observation operators $\H_k$ are fixed throughout the time interval. We perform experiments with several operators characterized by different  proportions $p$ of observed components from the model state $\x^\textnormal{ref}_k$ ($\Nobs \approx p \cdot \Nstate$). We consider four different values for $p$: 0.50, 0.12, 0.06 and 0.04 which represent 50\%, 12 \%, 6 \% and 4 \% of the total number of model components, respectively. Some of the observational networks used during the experiments are shown in Figure \ref{fig:exp-observational-grids} with their corresponding percentage of observed components from the model state.

The analyses of the EnKF-MC are compared against those obtained making use of the LETKF implementation proposed by Hunt et al in \cite{LETKFHunt,TELA:TELA076,application_letkf_1} . The analysis accuracy is measured by the root mean square error (RMSE)
\begin{eqnarray}
\label{eq:ER-RMSE-formula}
\displaystyle
\text{RMSE}  = \sqrt{\frac{1}{\N} \cdot \sum_{k=1}^\N \lb \x^\textnormal{ref}_k -\x^\textnormal{a}_k \rb^T \cdot \lb \x^\textnormal{ref}_k -\x^\textnormal{a}_k \rb }
\end{eqnarray}
where $\x^\textnormal{ref} \in \Re^{\Nstate \times 1}$ and $\x^\textnormal{a}_{k} \in \Re^{\Nstate \times 1}$ are the reference and the analysis solutions at time $k$, respectively, and $\N$ is the number of assimilation times.

The threshold used in \eqref{eq:EnKF-MC-truncated-SVD} during the computation of $\BE^{-1}$ is $\sigma_{r} = 0.10$. During the assimilation steps, the data error covariance matrices $\R_k$ are used (no representativeness errors are involved during the assimilations) and therefore. The different EnKF implementations are performed making use of FORTRAN and specialized libraries such as BLAS and LAPACK are used in order to perform the algebraic computations. 
\begin{figure}[H]
\centering
\begin{subfigure}{0.5\textwidth}
\centering
\includegraphics[width=0.9\textwidth,height=0.5\textwidth]{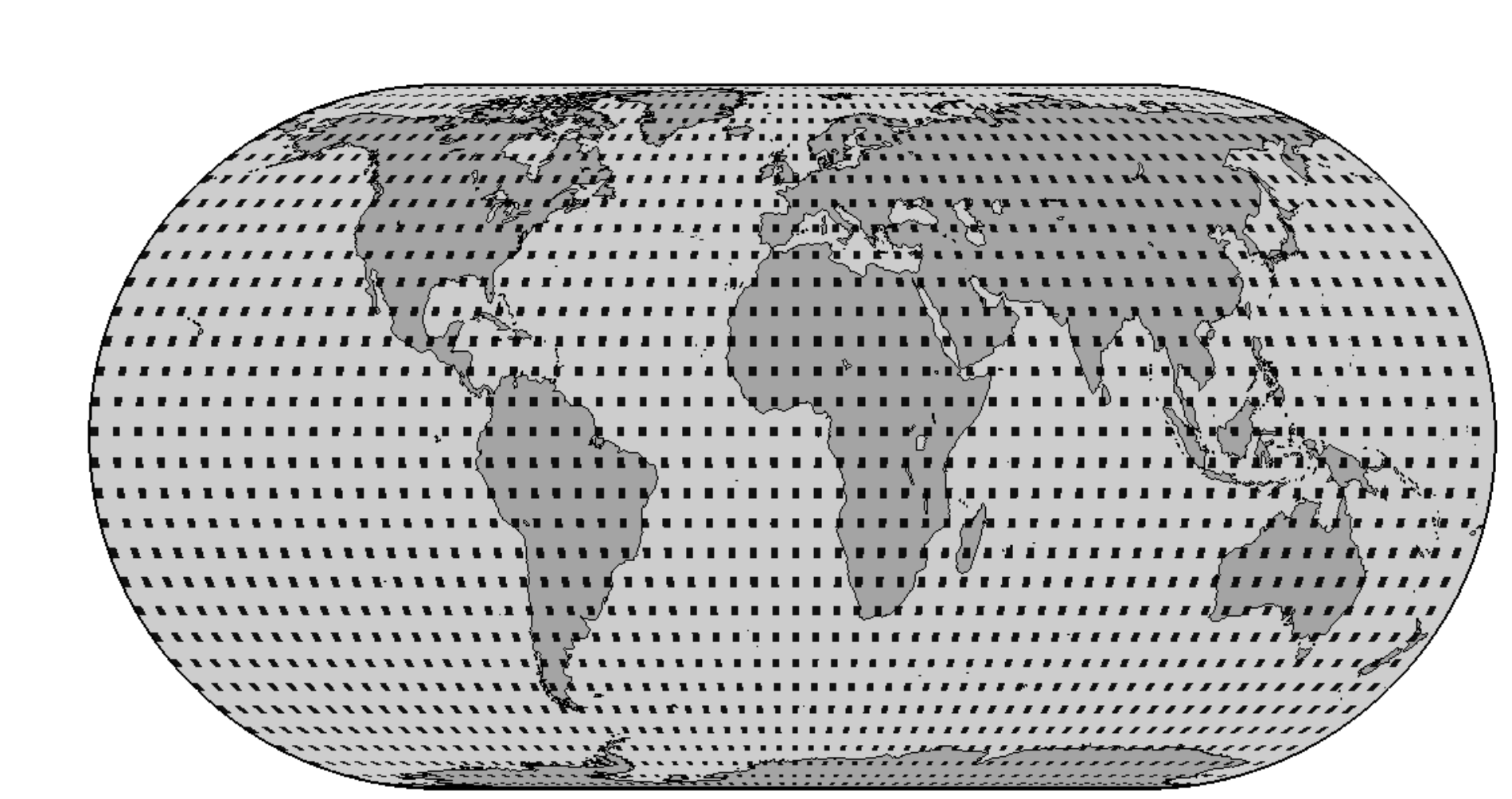}
\caption{$p=12\%$ }
\end{subfigure}%
\begin{subfigure}{0.5\textwidth}
\centering
\includegraphics[width=0.9\textwidth,height=0.5\textwidth]{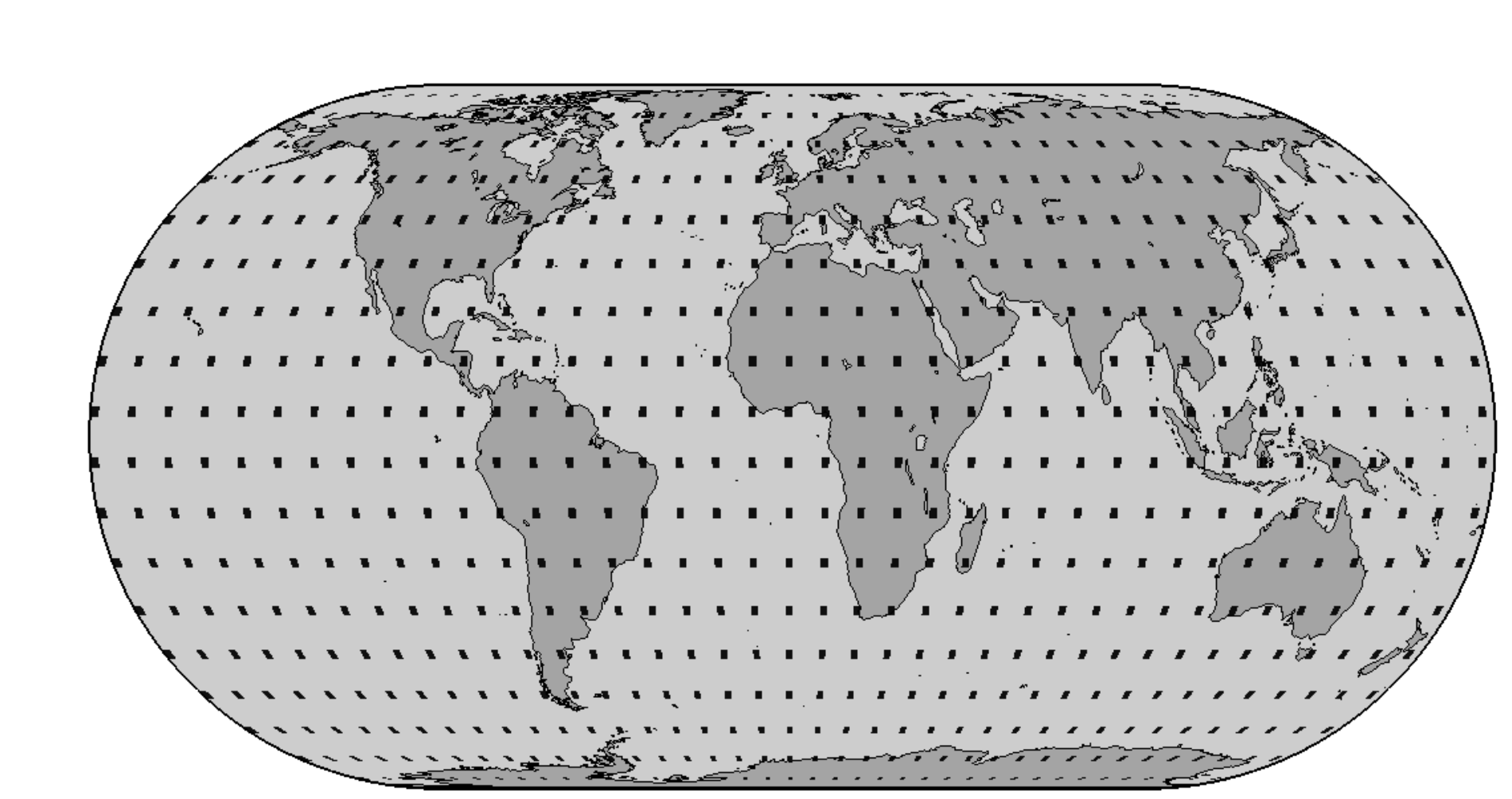}
\caption{$p=4\%$ }
\end{subfigure}%
\caption{Observational networks for different values of $p$. Dark dots denote the location of the observed components. The observed model variables are the zonal and the meridional wind components, the specific humidity, and the temperature.}
\label{fig:exp-observational-grids}
\end{figure}
%
\subsection{Results with dense observation networks}

We first consider dense observational networks in which 100\% and 50\% of the model components are observed. We vary the radius of influence $\ra$ from 1 to 5 grid points.

Figure \ref{fig:exp-LETKF-RMSE-different-radii-dense-network} shows the RMSE values for the LETKF and EnKF-MC analyses for different values of $\ra$ for the specific humidity when $50\%$ of model components are observed. When the radius of influence is increased the quality of the LETKF results degrades due to spurious correlations. This is expected since the local estimation of correlations in the context of LETKF is the sample covariance matrix. For instance, for a radius of influence of 1, the total number of local components for each local box is 36 which matches the dimension of the local background error distribution. Now, when we compare it against the ensemble size (96 ensemble members), sufficient degrees of freedom (95 degrees of freedom) are available in order to estimate the local background error distribution onto the ensemble space, and consequently all  directions of the local probability error distribution are accounted during the estimation and posterior assimilation. On the other hand, when the radius of influence is 5, the local box sizes have dimension 484 (model components) which is approximately 5 times larger than the ensemble size. Thus, when the analysis increments are computed onto the ensemble space, just part of the local background error distribution is accounted during the assimilation. Consequently, the larger the local box, the more local background error information cannot be represented in the ensemble space. 

Figure \ref{fig:exp-LETKF-RMSE-different-radii-dense-network} shows that EnKF-MC analyses improve with increasing radius of influence $\ra$. Since a dense observational network is considered during the assimilation, when the radius of influence is increased, a better estimation of the state of the system is obtained by the EnKF-MC. This can be seen clearly in Figure \ref{fig:exp-LETKF-RMSE-different-radii-dense}, where the RMSE values within the assimilation window are shown for the LETKF and the EnKF-MC solutions for the specific humidity variable and different values of $\ra$ and $p$. The quality of the EnKF-MC analysis for $\ra=5$ is better than that of the LETKF with $\ra=1$. Likewise, when a full observational network is considered ($p=100\%$), the proposed implementation outperforms the LETKF implementation. EnKF-MC  is able to exploit the large amount of information contained in dense observational networks by properly estimating the local background error correlations.  The RMSE values for all model variables and different values for $\ra$ and $p$ are summarized in Table \ref{tab:exp-RMSE-values-all-dense}.
\begin{figure}[H]
\centering
\begin{subfigure}{0.5\textwidth}
\centering
\includegraphics[width=0.9\textwidth,height=0.75\textwidth]{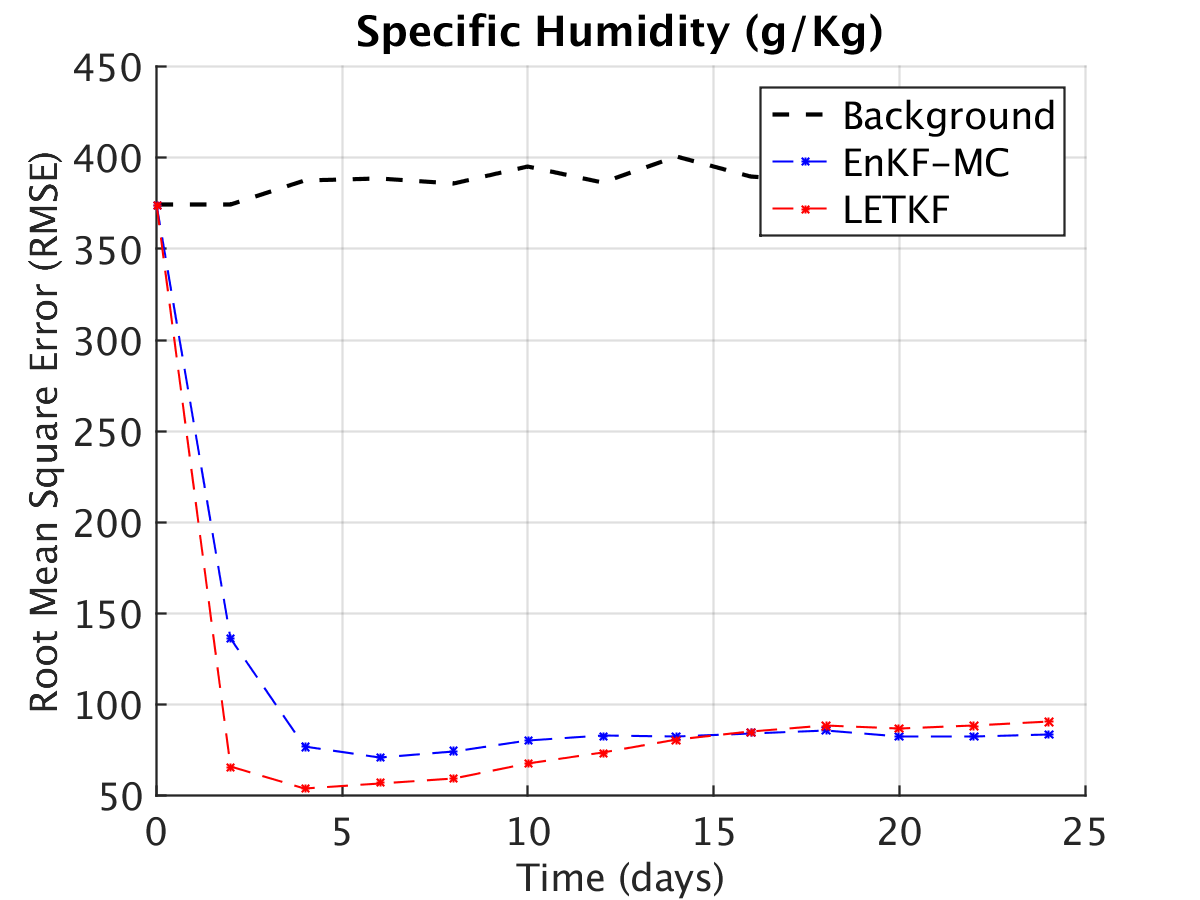}
\caption{$\ra  = 1$.}
\end{subfigure}%
\begin{subfigure}{0.5\textwidth}
\centering
\includegraphics[width=0.9\textwidth,height=0.75\textwidth]{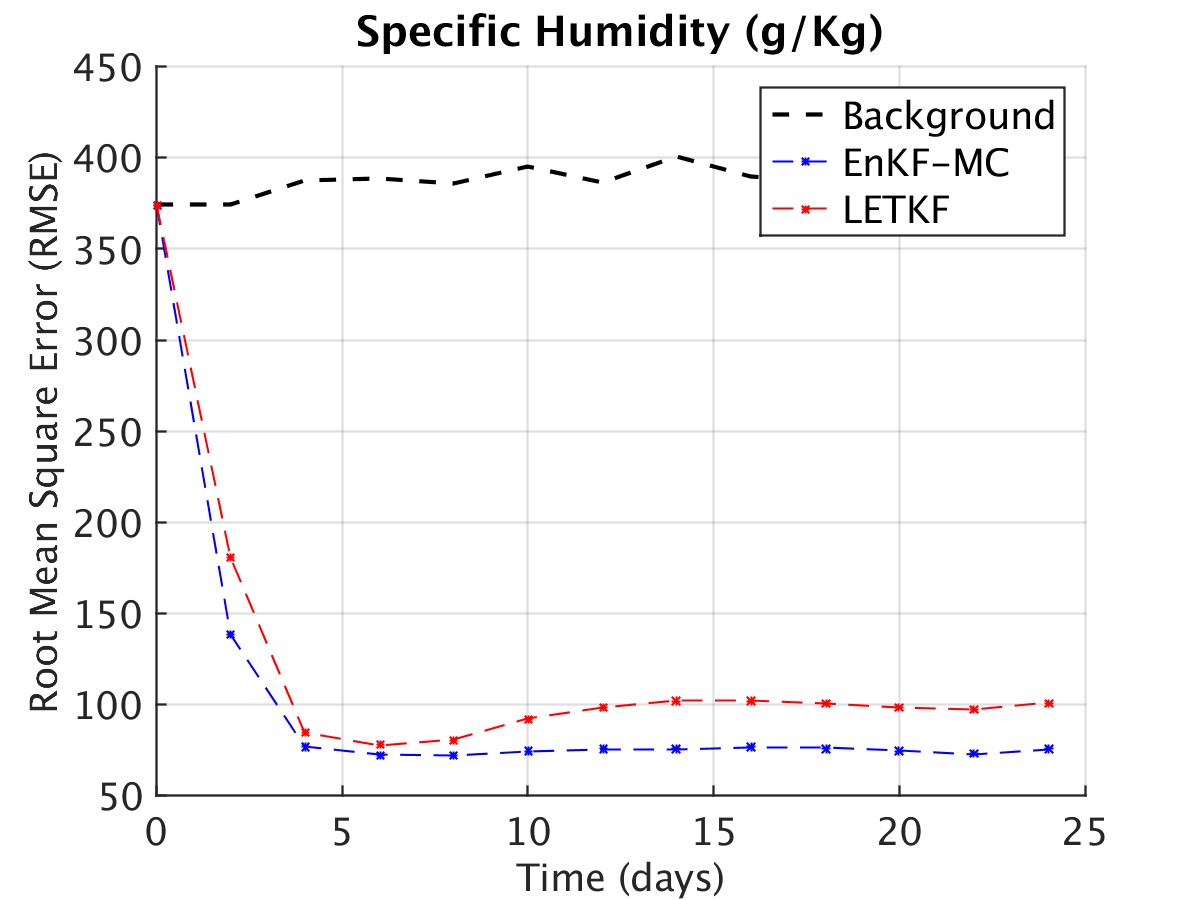}
\caption{$\ra  = 2$.}
\end{subfigure}
\begin{subfigure}{0.5\textwidth}
\centering
\includegraphics[width=0.9\textwidth,height=0.75\textwidth]{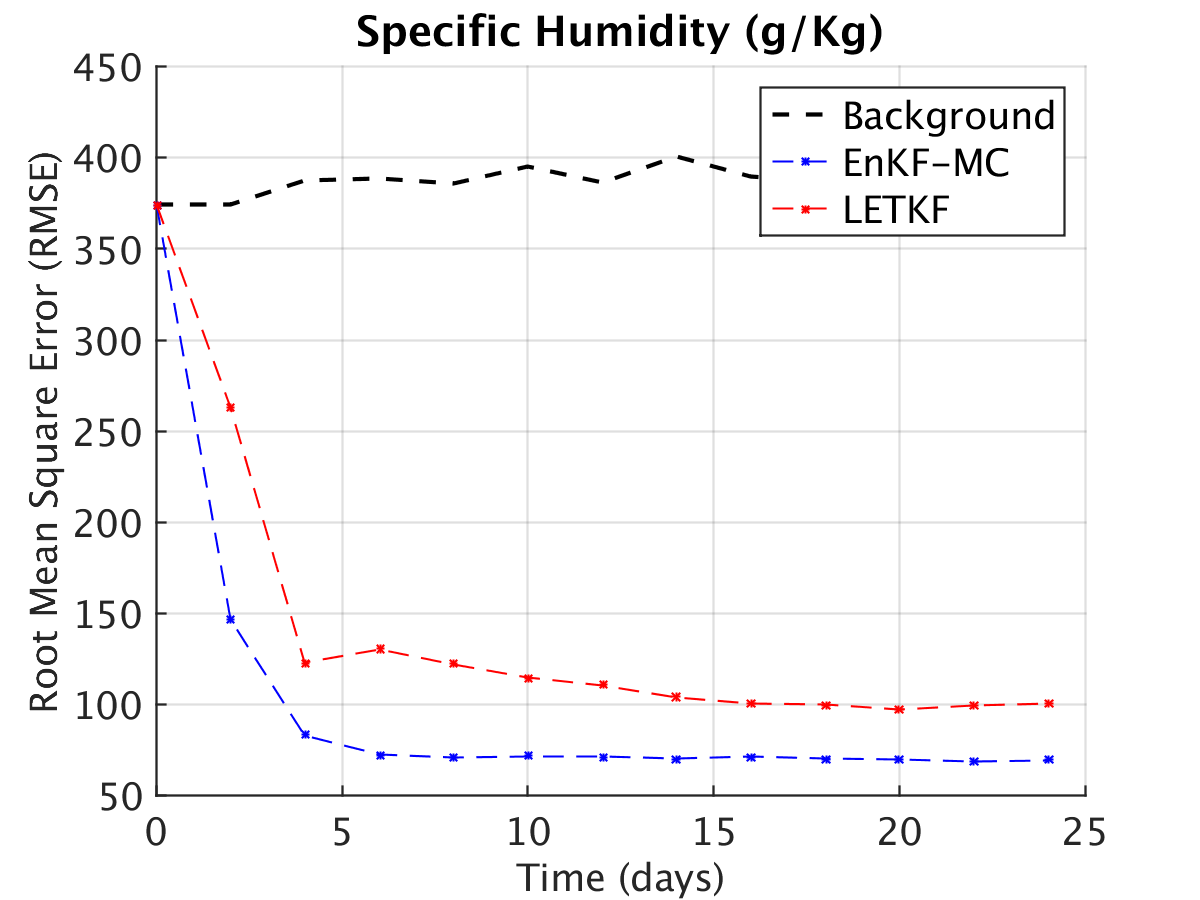}
\caption{$\ra  = 3$.}
\end{subfigure}%
\begin{subfigure}{0.5\textwidth}
\centering
\includegraphics[width=0.9\textwidth,height=0.75\textwidth]{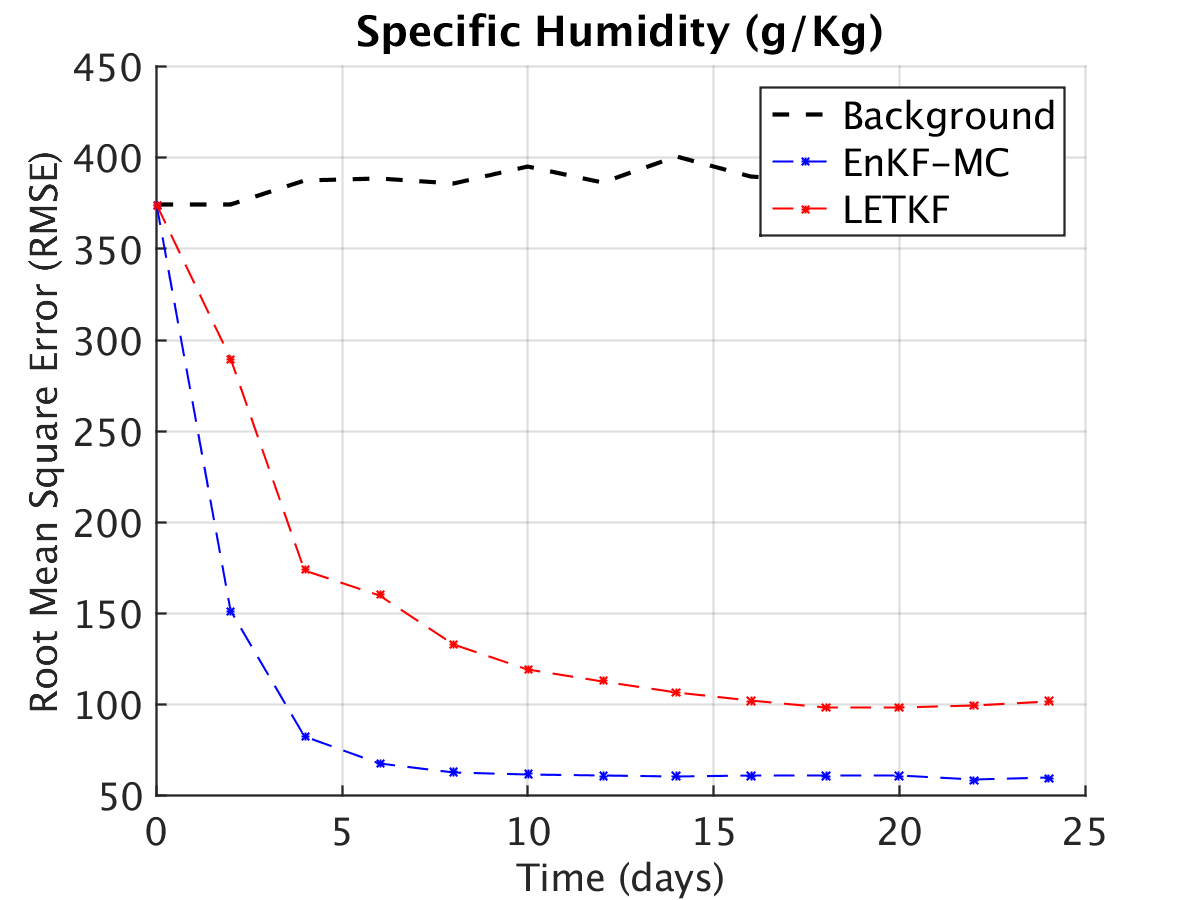}
\caption{$\ra  = 4$.}
\end{subfigure}
\begin{subfigure}{0.5\textwidth}
\centering
\includegraphics[width=0.9\textwidth,height=0.75\textwidth]{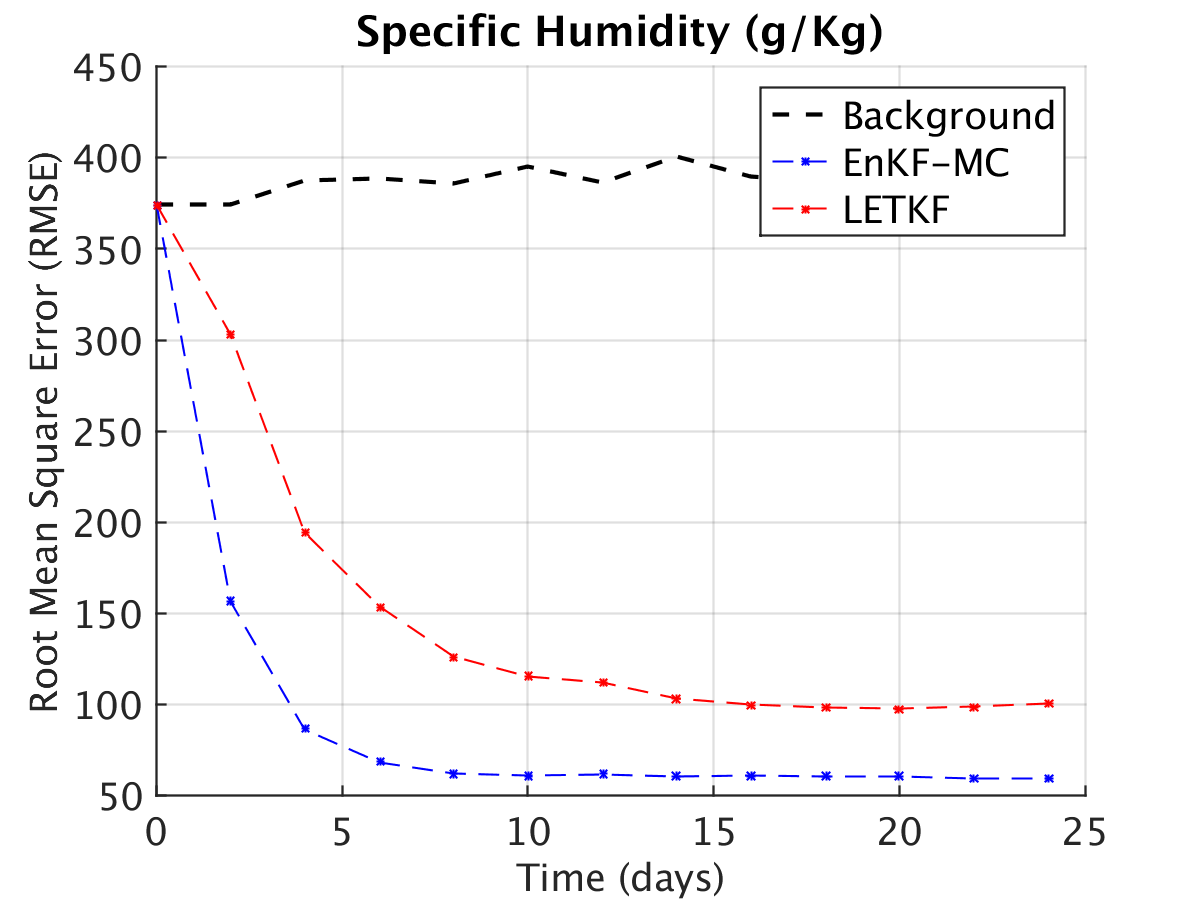}
\caption{$\ra  = 5$.}
\end{subfigure}
\caption{RMSE of specific humidity analyses with a dense observational network. When the radius of influence $\ra$ is increased the performance of LETKF degrades.}
\label{fig:exp-LETKF-RMSE-different-radii-dense-network}
\end{figure}

\begin{figure}[H]
\centering
\begin{subfigure}{0.5\textwidth}
\centering
\includegraphics[width=0.9\textwidth,height=0.75\textwidth]{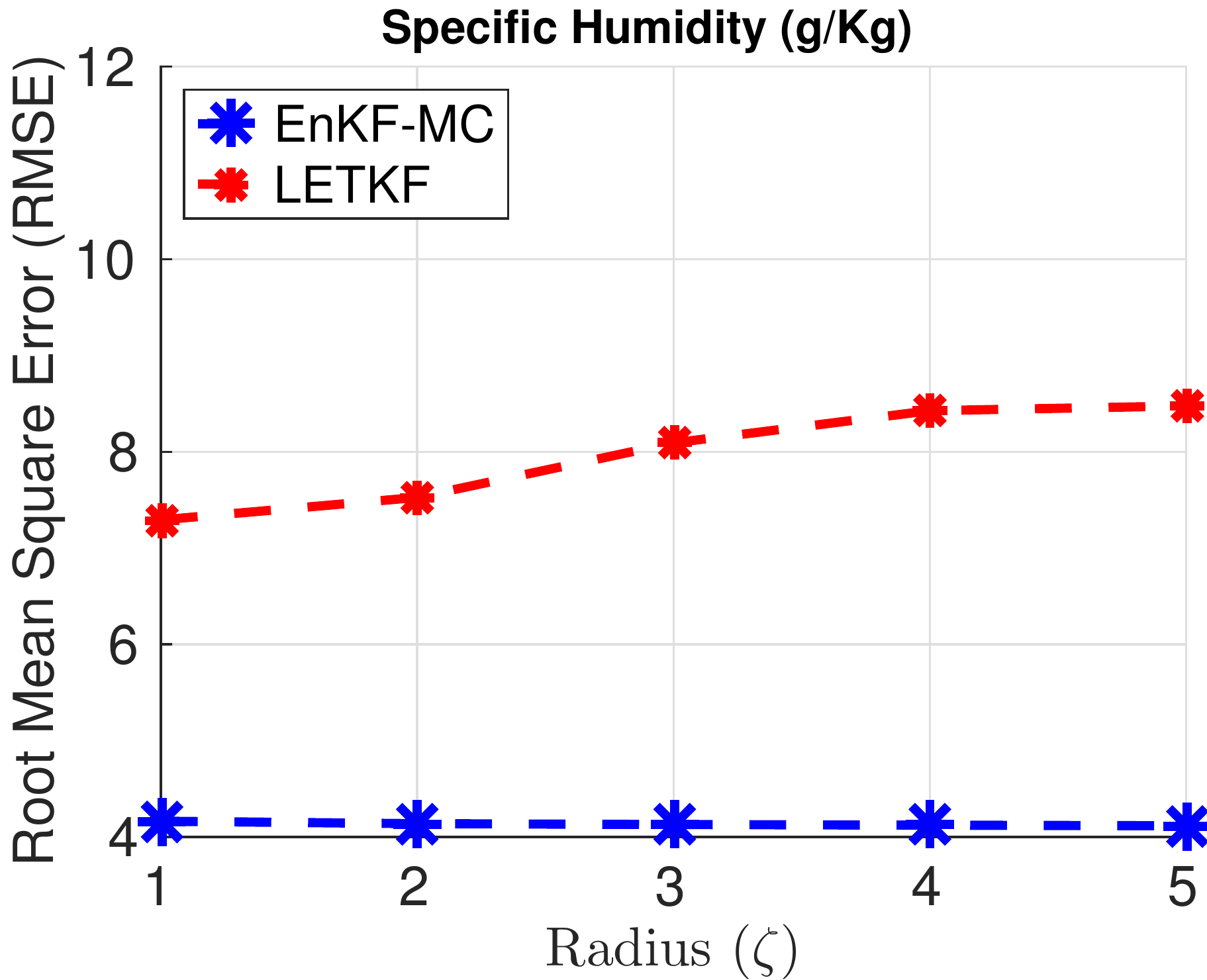}
\caption{$p  = 100\%$.}
\end{subfigure}%
\begin{subfigure}{0.5\textwidth}
\centering
\includegraphics[width=0.9\textwidth,height=0.75\textwidth]{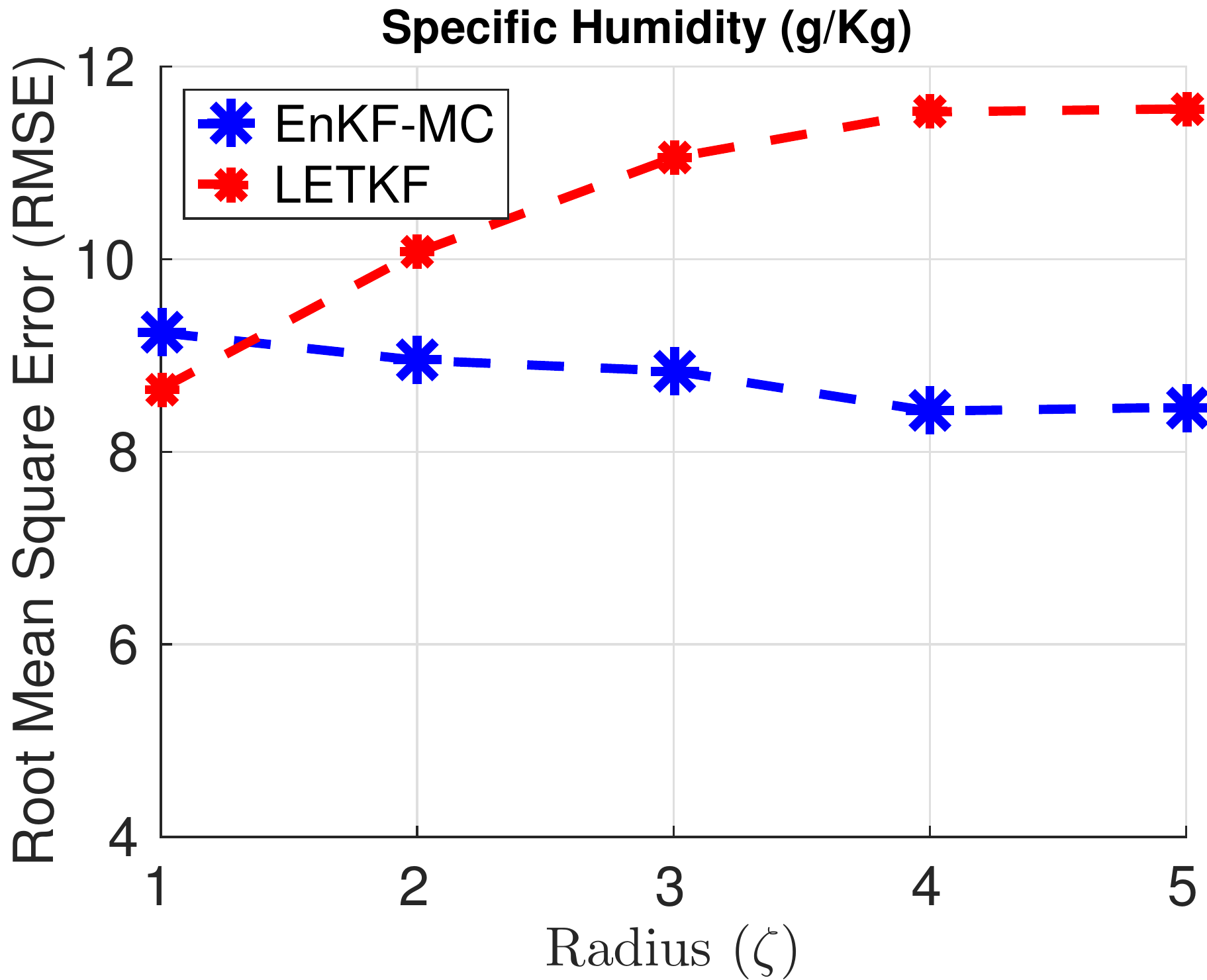}
\caption{$p  = 50\%$.}
\end{subfigure}%
\caption{Analysis RMSE for the specific humidity variable. The RMSE values of the assimilation window are shown for different values of $\ra$ and percentage of observed components $p$. When the local domain sizes are increased the accuracy of the LETKF analysis degrades, while the accuracy of EnKF-MC analysis improves.}
\label{fig:exp-LETKF-RMSE-different-radii-dense}
\end{figure}
\begin{table}[H]
\centering
{\footnotesize
\begin{tabular}{|c|c|c|c|c|} \hline
Variable (units) &$\ra$ & $p$ & EnKF-MC & LETKF \\ \hline
 \multirow{10}{*}{Zonal Wind Component ($u$), ($m/s$)}  &  \multirow{2}{*}{1}  & $100 \%$ & $ 6.012 \times 10^{1}$ & $ 6.394 \times 10^{1}$\\ \cline{3-5} &  & $50 \%$ & $ 4.264 \times 10^{2}$ & $ 9.825 \times 10^{2}$\\ \cline{2-5} &  \multirow{2}{*}{2}  & $100 \%$ & $ 6.078 \times 10^{1}$ & $ 6.820 \times 10^{1}$\\ \cline{3-5} &  & $50 \%$ & $ 2.255 \times 10^{2}$ & $ 1.330 \times 10^{3}$\\ \cline{2-5} &  \multirow{2}{*}{3}  & $100 \%$ & $ 6.080 \times 10^{1}$ & $ 7.969 \times 10^{1}$\\ \cline{3-5} &  & $50 \%$ & $ 2.341 \times 10^{2}$ & $ 1.124 \times 10^{3}$\\ \cline{2-5} &  \multirow{2}{*}{4}  & $100 \%$ & $ 6.088 \times 10^{1}$ & $ 9.687 \times 10^{1}$\\ \cline{3-5} &  & $50 \%$ & $ 2.418 \times 10^{2}$ & $ 1.072 \times 10^{3}$\\ \cline{2-5} &  \multirow{2}{*}{5}  & $100 \%$ & $ 6.092 \times 10^{1}$ & $ 1.190 \times 10^{2}$\\ \cline{3-5} &  & $50 \%$ & $ 2.673 \times 10^{2}$ & $ 1.017 \times 10^{3}$\\ \cline{1-5} \multirow{10}{*}{Meridional Wind Component ($v$) ($m/s$)}  &  \multirow{2}{*}{1}  & $100 \%$ & $ 3.031 \times 10^{1}$ & $ 6.418 \times 10^{1}$\\ \cline{3-5} &  & $50 \%$ & $ 2.632 \times 10^{2}$ & $ 3.247 \times 10^{2}$\\ \cline{2-5} &  \multirow{2}{*}{2}  & $100 \%$ & $ 3.046 \times 10^{1}$ & $ 6.597 \times 10^{1}$\\ \cline{3-5} &  & $50 \%$ & $ 1.641 \times 10^{2}$ & $ 4.138 \times 10^{2}$\\ \cline{2-5} &  \multirow{2}{*}{3}  & $100 \%$ & $ 3.047 \times 10^{1}$ & $ 7.565 \times 10^{1}$\\ \cline{3-5} &  & $50 \%$ & $ 1.964 \times 10^{2}$ & $ 4.418 \times 10^{2}$\\ \cline{2-5} &  \multirow{2}{*}{4}  & $100 \%$ & $ 3.052 \times 10^{1}$ & $ 9.332 \times 10^{1}$\\ \cline{3-5} &  & $50 \%$ & $ 2.084 \times 10^{2}$ & $ 4.832 \times 10^{2}$\\ \cline{2-5} &  \multirow{2}{*}{5}  & $100 \%$ & $ 3.054 \times 10^{1}$ & $ 1.151 \times 10^{2}$\\ \cline{3-5} &  & $50 \%$ & $ 2.428 \times 10^{2}$ & $ 5.029 \times 10^{2}$\\ \cline{1-5} \multirow{10}{*}{Temperature ($K$)}  &  \multirow{2}{*}{1}  & $100 \%$ & $ 9.404 \times 10^{2}$ & $ 5.078 \times 10^{2}$\\ \cline{3-5} &  & $50 \%$ & $ 6.644 \times 10^{2}$ & $ 7.059 \times 10^{2}$\\ \cline{2-5} &  \multirow{2}{*}{2}  & $100 \%$ & $ 9.416 \times 10^{2}$ & $ 4.112 \times 10^{2}$\\ \cline{3-5} &  & $50 \%$ & $ 6.129 \times 10^{2}$ & $ 1.138 \times 10^{3}$\\ \cline{2-5} &  \multirow{2}{*}{3}  & $100 \%$ & $ 9.425 \times 10^{2}$ & $ 3.447 \times 10^{2}$\\ \cline{3-5} &  & $50 \%$ & $ 5.815 \times 10^{2}$ & $ 1.389 \times 10^{3}$\\ \cline{2-5} &  \multirow{2}{*}{4}  & $100 \%$ & $ 9.432 \times 10^{2}$ & $ 2.939 \times 10^{2}$\\ \cline{3-5} &  & $50 \%$ & $ 5.585 \times 10^{2}$ & $ 1.355 \times 10^{3}$\\ \cline{2-5} &  \multirow{2}{*}{5}  & $100 \%$ & $ 9.432 \times 10^{2}$ & $ 2.554 \times 10^{2}$\\ \cline{3-5} &  & $50 \%$ & $ 5.500 \times 10^{2}$ & $ 1.104 \times 10^{3}$\\ \cline{1-5} \multirow{10}{*}{Specific Humidity ($g/Kg$)}  &  \multirow{2}{*}{1}  & $100 \%$ & $ 1.733 \times 10^{1}$ & $ 5.427 \times 10^{1}$\\ \cline{3-5} &  & $50 \%$ & $ 8.680 \times 10^{1}$ & $ 7.602 \times 10^{1}$\\ \cline{2-5} &  \multirow{2}{*}{2}  & $100 \%$ & $ 1.712 \times 10^{1}$ & $ 5.669 \times 10^{1}$\\ \cline{3-5} &  & $50 \%$ & $ 8.204 \times 10^{1}$ & $ 1.045 \times 10^{2}$\\ \cline{2-5} &  \multirow{2}{*}{3}  & $100 \%$ & $ 1.705 \times 10^{1}$ & $ 6.630 \times 10^{1}$\\ \cline{3-5} &  & $50 \%$ & $ 8.089 \times 10^{1}$ & $ 1.298 \times 10^{2}$\\ \cline{2-5} &  \multirow{2}{*}{4}  & $100 \%$ & $ 1.699 \times 10^{1}$ & $ 7.344 \times 10^{1}$\\ \cline{3-5} &  & $50 \%$ & $ 7.525 \times 10^{1}$ & $ 1.431 \times 10^{2}$\\ \cline{2-5} &  \multirow{2}{*}{5}  & $100 \%$ & $ 1.694 \times 10^{1}$ & $ 7.617 \times 10^{1}$\\ \cline{3-5} &  & $50 \%$ & $ 7.642 \times 10^{1}$ & $ 1.458 \times 10^{2}$\\ \cline{1-5}
\end{tabular}
}
\caption{RMSE values for the EnKF-MC and the LETKF analyses with the SPEEDY model and for different values for $\ra$ and $p$. Dense observational networks are considered in this experimental setting.}
\label{tab:exp-RMSE-values-all-dense}
\end{table}

\subsection{Results with sparse observation networks}

For sparse observational networks, in general, the results obtained by the EnKF-MC are more accurate than those obtained by the LETKF, as reported in the Tables \ref{tab:exp-RMSE-values-wind-components-sparse} and \ref{tab:exp-RMSE-values-others-sparse}. We vary the values of $\ra$ from 1 to 5. Three sparse observational networks with $p=12\%$, $6\%$, and $4\%$, respectively are considered. 

Figure \ref{fig:exp-LETKF-RMSE-different-radii-sparse-network} shows the RMSE values of the specific humidity analyses for different radii of influence and $4\%$ of the model components being observed. The best performance of the LETKF analyses is obtained when the radius of influence is set to 2. Note that for $\ra=1$ the LETKF performs poorly, which is expected since during the assimilation most of model components will not have observations in their local boxes. For $\ra\ge 3$ the effects of spurious correlations degrade the quality of the LETKF analysis. On the other hand, the background error correlations estimated by the modified Cholesky decomposition allows the EnKF-MC formulation to obtain good analyses even for largest radius of influence $\ra=5$. 

Figure \ref{fig:exp-LETKF-RMSE-different-radii-sparse} shows the RMSE values of the LETKF and the EnKF-MC implementations for different radii of influences and two sparse observational networks. Clearly, when the radius of influence is increased, in the LETKF context, the analysis corrections are impacted by spurious correlations. On the other hand, the quality of the results in the EnKF-MC case is considerably better. When data errors components are uncorrelated $\ra$ can be seen as a free parameter and the choice can be based on the ``optimal performance of the filter''. For the largest radius of influence $\ra=5$ the RMSE values of the ENKF-MC and the LETKF implementations differ by one order of magnitude.

Figure \ref{fig:exp-model-variables} reports the RMSE values for the zonal and the meridional wind component analyses, and for different values of $p$ and $\ra$. As can be seen, the estimation of background errors via $\BE$ can reduce the impact of spurious correlations; the RMSE values of the EnKF-MC analyses remain small at all assimilation times, from which we infer that the background error correlations are properly estimated. On the other hand, the impact of spurious correlations is evident in the context of LETKF. Since most of the model components are unobserved, the background error correlations drive the quality of the analysis, and spurious correlations lead to a poor performance of the filter at many assimilation times. 
\begin{figure}[H]
\centering
\begin{subfigure}{0.5\textwidth}
\centering
\includegraphics[width=0.9\textwidth,height=0.75\textwidth]{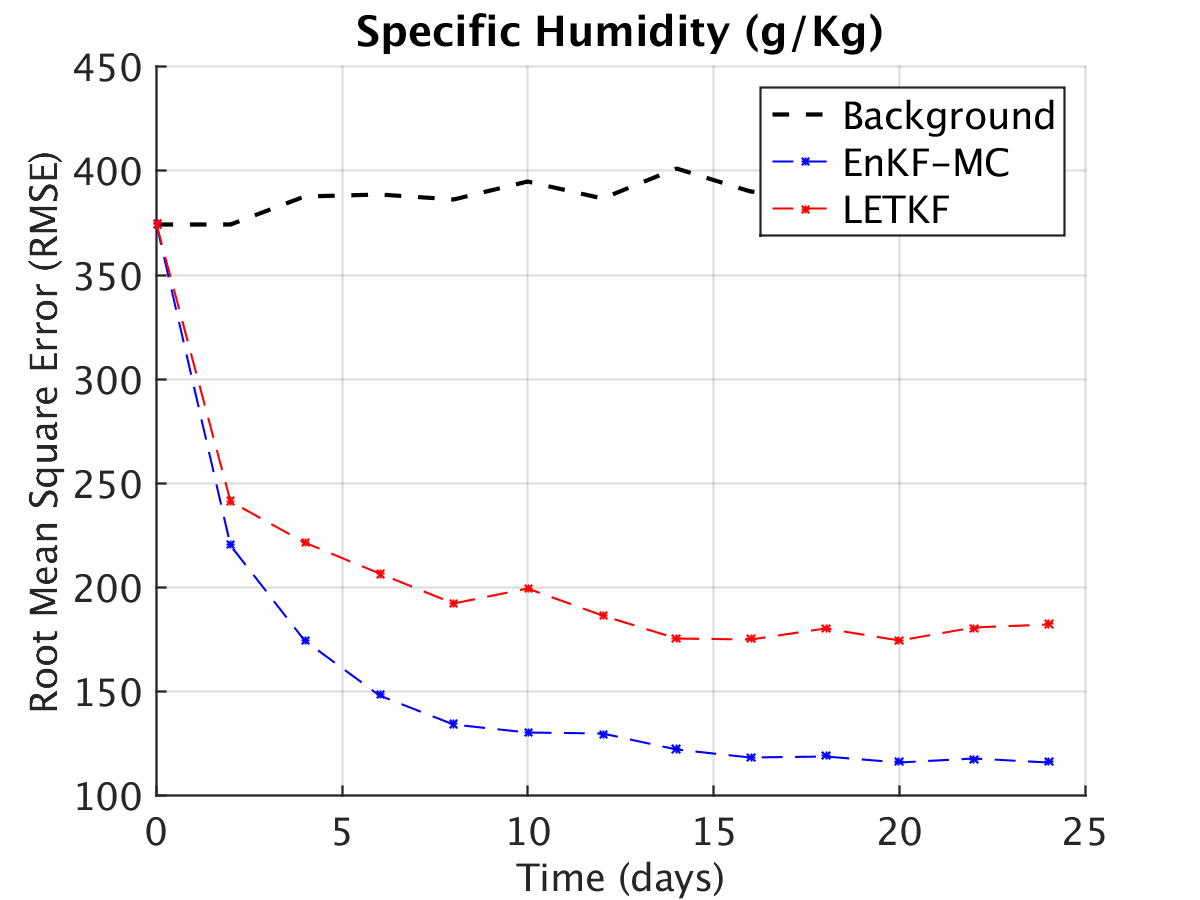}
\caption{$\ra  = 1$.}
\end{subfigure}%
\begin{subfigure}{0.5\textwidth}
\centering
\includegraphics[width=0.9\textwidth,height=0.75\textwidth]{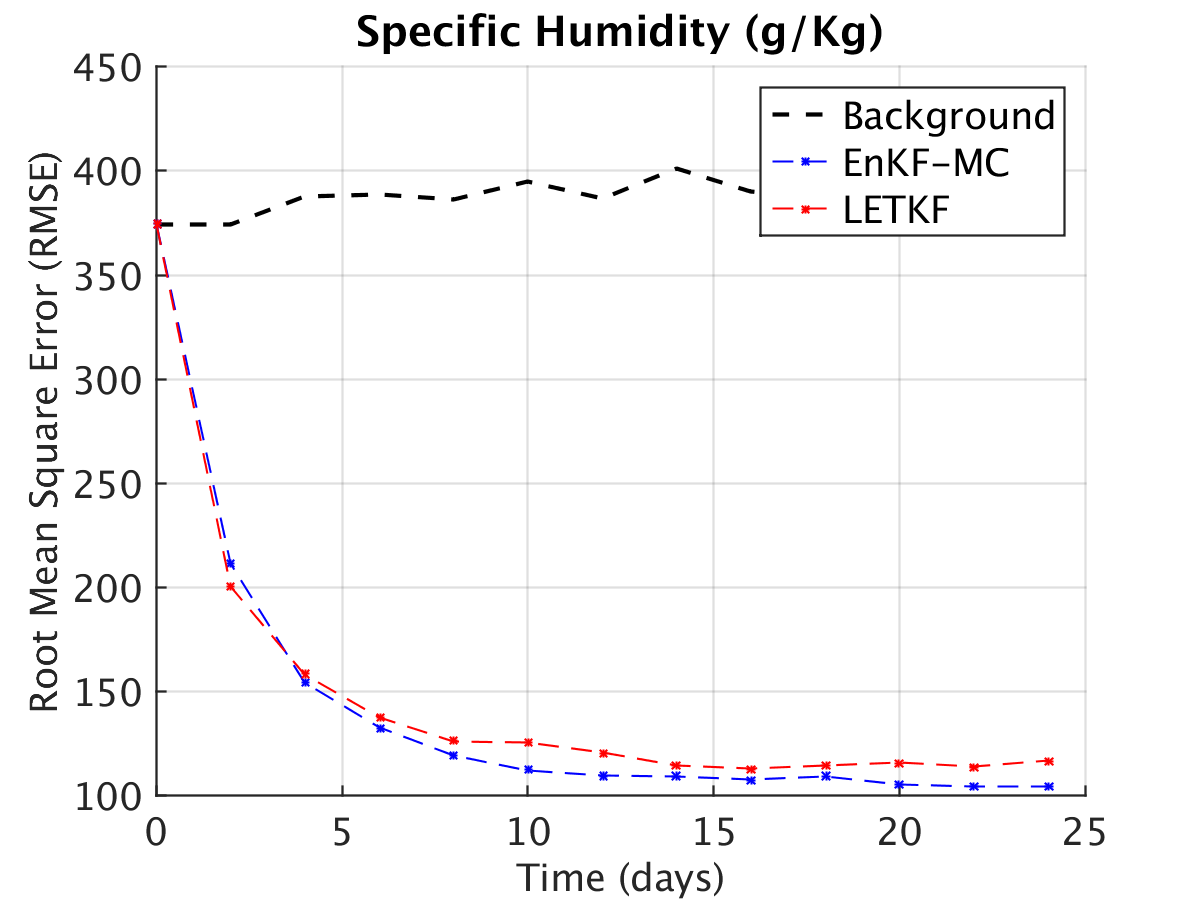}
\caption{$\ra  = 2$.}
\end{subfigure}
\begin{subfigure}{0.5\textwidth}
\centering
\includegraphics[width=0.9\textwidth,height=0.75\textwidth]{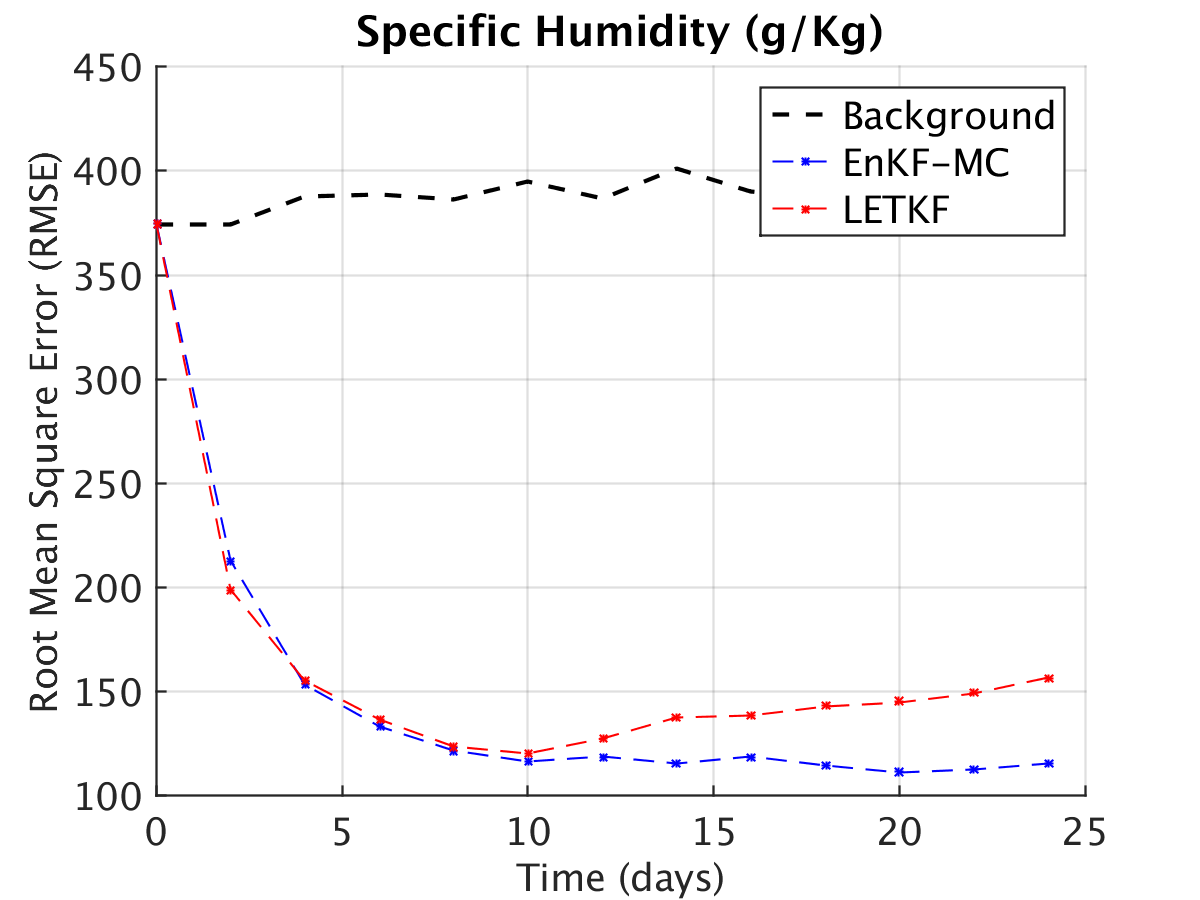}
\caption{$\ra  = 3$.}
\end{subfigure}%
\begin{subfigure}{0.5\textwidth}
\centering
\includegraphics[width=0.9\textwidth,height=0.75\textwidth]{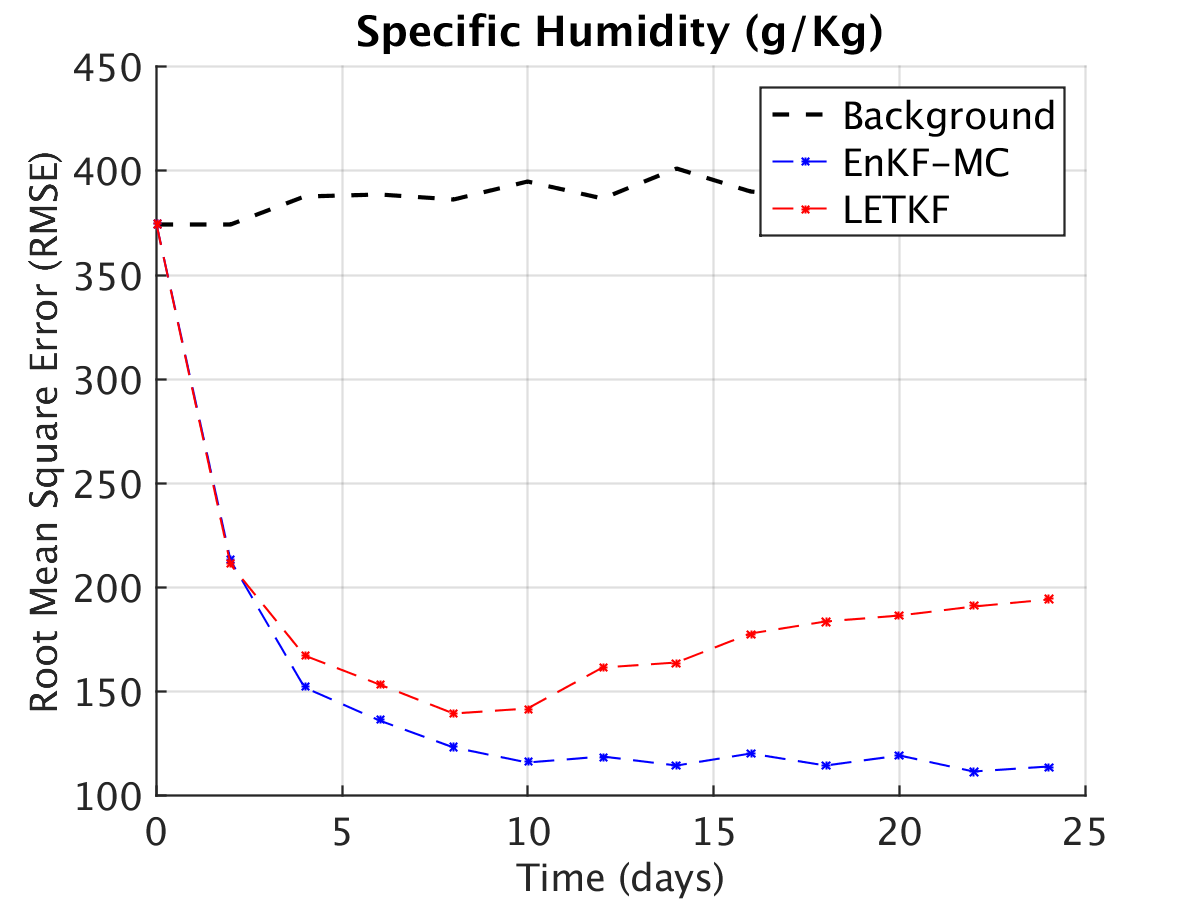}
\caption{$\ra  = 4$.}
\end{subfigure}
\begin{subfigure}{0.5\textwidth}
\centering
\includegraphics[width=0.9\textwidth,height=0.75\textwidth]{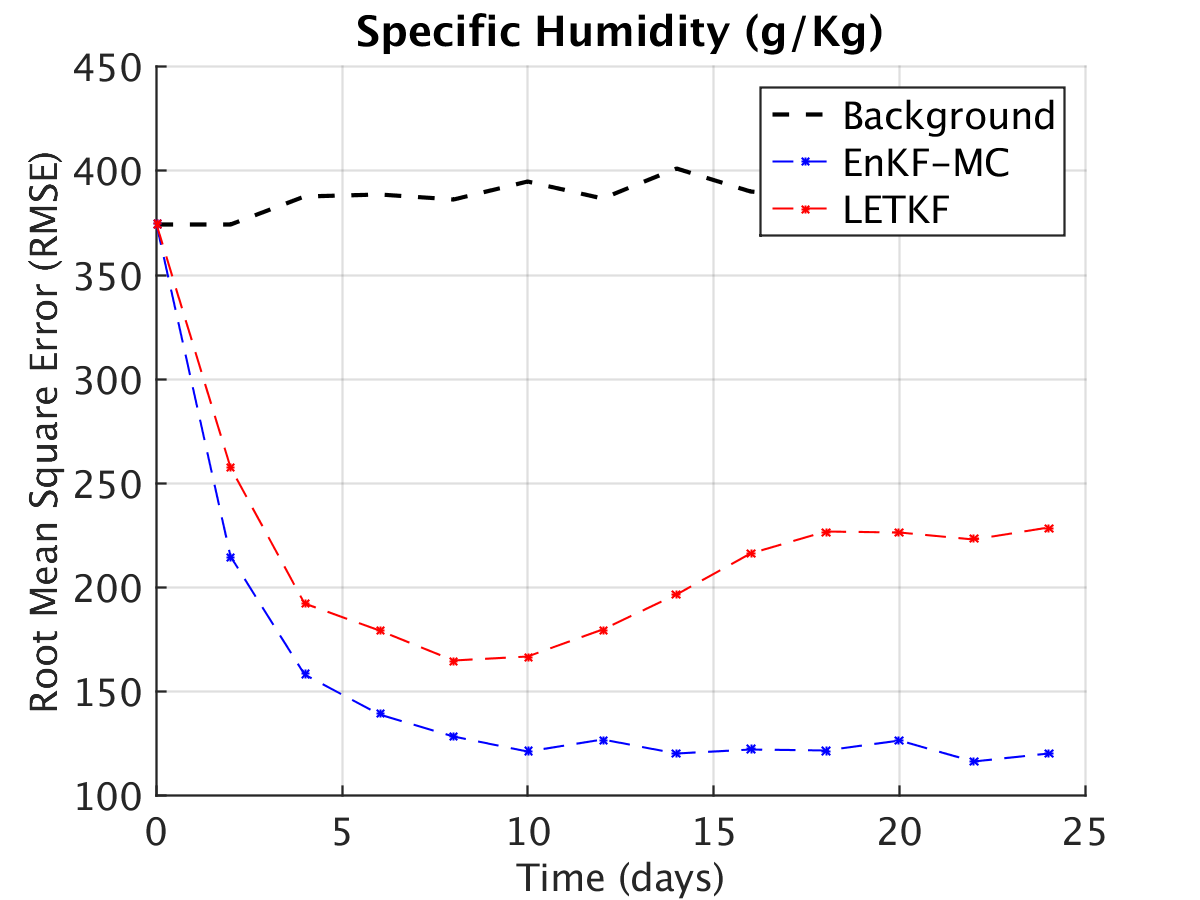}
\caption{$\ra  = 5$.}
\end{subfigure}
\caption{RMSE of specific humidity analyses with a sparse observational network ($p \sim 4\%$) and different values of $\ra$. }
\label{fig:exp-LETKF-RMSE-different-radii-sparse-network}
\end{figure}
\begin{figure}[H]
\centering
\begin{subfigure}{0.5\textwidth}
\centering
\includegraphics[width=0.9\textwidth,height=0.75\textwidth]{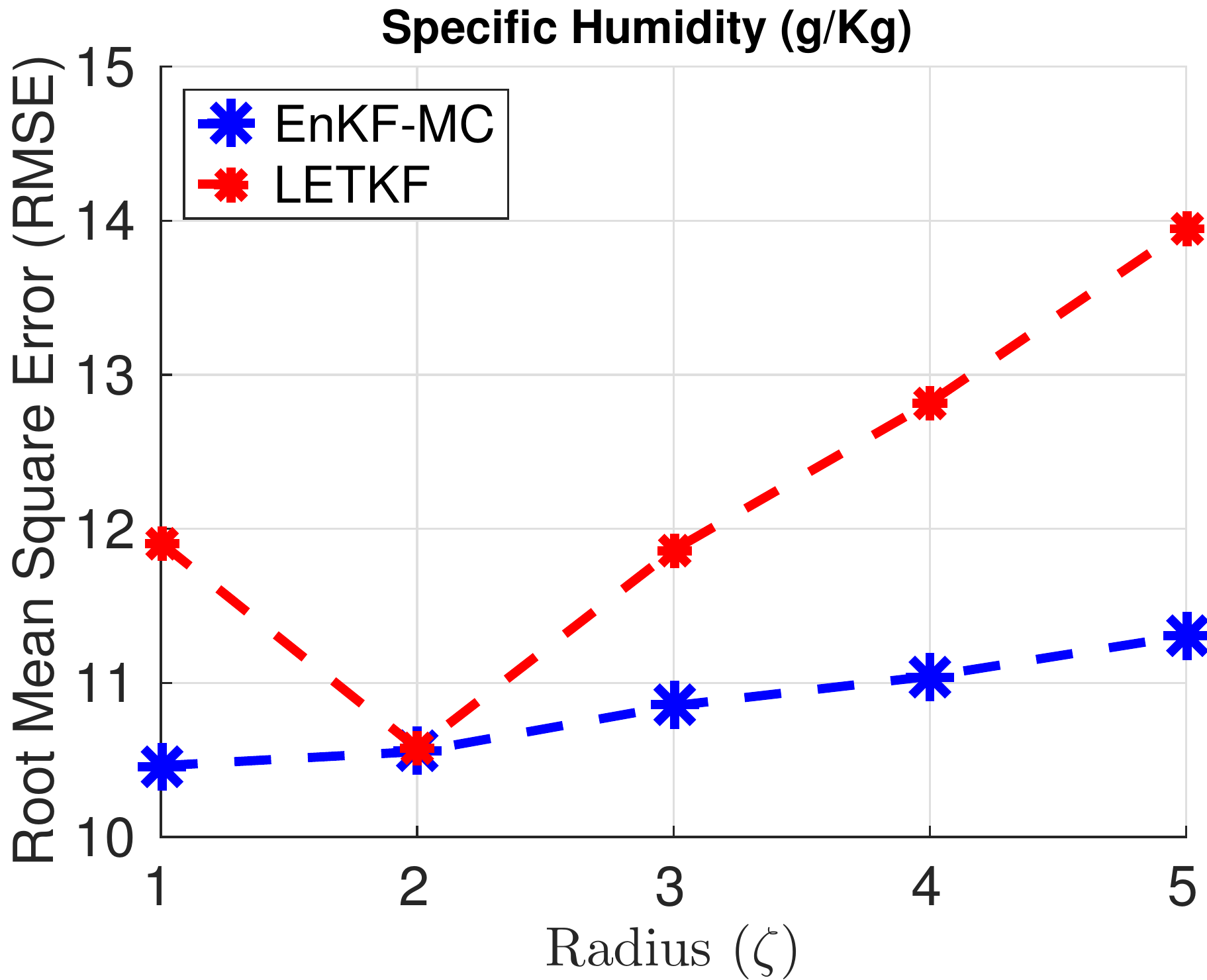}
\caption{$p  = 6\%$.}
\end{subfigure}%
\begin{subfigure}{0.5\textwidth}
\centering
\includegraphics[width=0.9\textwidth,height=0.75\textwidth]{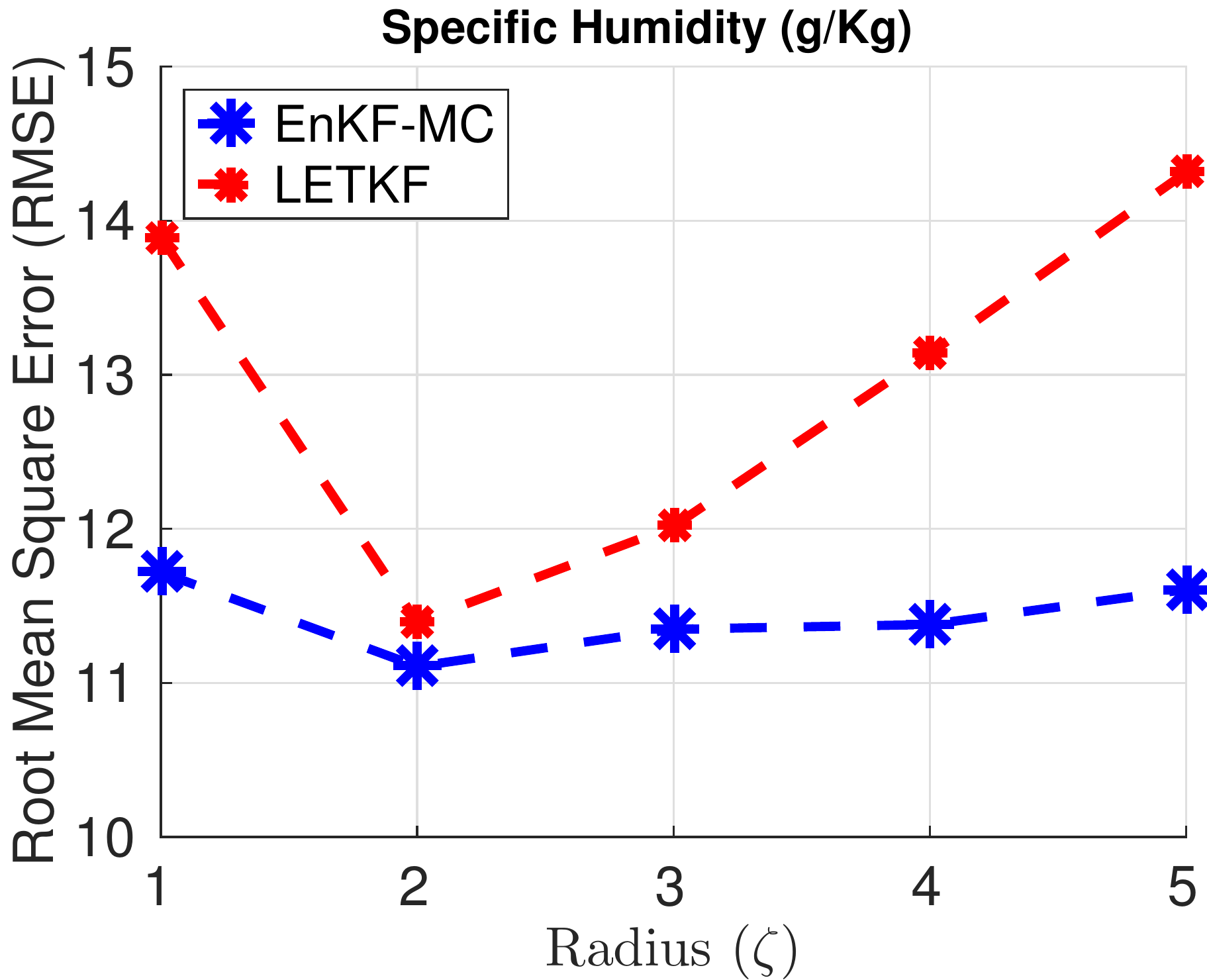}
\caption{$p  = 4\%$.}
\end{subfigure}%
\caption{Analysis RMSE for the specific humidity variable with sparse observation networks. RMSE values are shown for different values of $\ra$ and percentage of observed components $p$.}
\label{fig:exp-LETKF-RMSE-different-radii-sparse}
\end{figure}
Figures \ref{fig:exp-snapshot-meridional-wind-component} and \ref{fig:exp-snapshot-zonal-wind-component} provide snapshots of the meridional and the zonal wind components, respectively, at the first assimilation time. For this particular case the percentage of observed model components is $p=4\%$. At this step, only the initial observation has been assimilated in order to compute the analysis corrections by the EnKF-MC and the LETKF methods. The background solution contains erroneous waves for the zonal and the meridional wind components. For instance, for the $u$ model variable, such waves are clearly present near the poles. After the first assimilation step, the LETKF analysis solution dissipates the erroneous waves but, the numerical values of the wind components are slightly greater than those of the reference solutions. This numerical difference increases at later times due to the highly-nonlinear dynamics of SPEEDY, as can bee seen in Figure \ref{fig:exp-model-variables}. On the other hand, the EnKF-MC implementation recovers the reference shape, and the analysis values of the numerical model components are close to that of the reference solution. This shows again that the use of the modified Cholesky decomposition as the estimator of the background error correlations can mitigate the impact of spurious error correlations. 
\begin{table}[H]
\centering
{\footnotesize
\begin{tabular}{|c|c|c|c|c|} \hline
Variable (units) &$\ra$ & $p$ & EnKF-MC & LETKF \\ \hline
 \multirow{15}{*}{Zonal Wind Component ($u$), ($m/s$)}  &  \multirow{3}{*}{1}  & $12\%$ & $ 5.514 \times 10^{2}$ & $ 5.471 \times 10^{2}$\\ \cline{3-5} &  & $ 6 \%$ & $ 6.972 \times 10^{2}$ & $ 1.168 \times 10^{3}$\\ \cline{3-5} &  & $4\%$ & $ 9.393 \times 10^{2}$ & $ 1.737 \times 10^{3}$\\ \cline{2-5} &  \multirow{3}{*}{2}  & $12\%$ & $ 4.187 \times 10^{2}$ & $ 1.275 \times 10^{3}$\\ \cline{3-5} &  & $ 6 \%$ & $ 6.090 \times 10^{2}$ & $ 7.591 \times 10^{2}$\\ \cline{3-5} &  & $4\%$ & $ 7.853 \times 10^{2}$ & $ 8.569 \times 10^{2}$\\ \cline{2-5} &  \multirow{3}{*}{3}  & $12\%$ & $ 4.388 \times 10^{2}$ & $ 1.661 \times 10^{3}$\\ \cline{3-5} &  & $ 6 \%$ & $ 6.146 \times 10^{2}$ & $ 1.237 \times 10^{3}$\\ \cline{3-5} &  & $4\%$ & $ 7.438 \times 10^{2}$ & $ 9.997 \times 10^{2}$\\ \cline{2-5} &  \multirow{3}{*}{4}  & $12\%$ & $ 4.323 \times 10^{2}$ & $ 1.752 \times 10^{3}$\\ \cline{3-5} &  & $ 6 \%$ & $ 5.990 \times 10^{2}$ & $ 1.608 \times 10^{3}$\\ \cline{3-5} &  & $4\%$ & $ 7.124 \times 10^{2}$ & $ 1.258 \times 10^{3}$\\ \cline{2-5} &  \multirow{3}{*}{5}  & $12\%$ & $ 4.456 \times 10^{2}$ & $ 1.862 \times 10^{3}$\\ \cline{3-5} &  & $ 6 \%$ & $ 6.106 \times 10^{2}$ & $ 1.983 \times 10^{3}$\\ \cline{3-5} &  & $4\%$ & $ 7.160 \times 10^{2}$ & $ 1.602 \times 10^{3}$\\ \cline{1-5} \multirow{15}{*}{Meridional Wind Component ($v$) ($m/s$)}  &  \multirow{3}{*}{1}  & $12\%$ & $ 3.540 \times 10^{2}$ & $ 4.496 \times 10^{2}$\\ \cline{3-5} &  & $ 6 \%$ & $ 5.165 \times 10^{2}$ & $ 1.158 \times 10^{3}$\\ \cline{3-5} &  & $4\%$ & $ 7.770 \times 10^{2}$ & $ 1.749 \times 10^{3}$\\ \cline{2-5} &  \multirow{3}{*}{2}  & $12\%$ & $ 3.009 \times 10^{2}$ & $ 7.285 \times 10^{2}$\\ \cline{3-5} &  & $ 6 \%$ & $ 4.605 \times 10^{2}$ & $ 5.520 \times 10^{2}$\\ \cline{3-5} &  & $4\%$ & $ 6.217 \times 10^{2}$ & $ 7.420 \times 10^{2}$\\ \cline{2-5} &  \multirow{3}{*}{3}  & $12\%$ & $ 3.172 \times 10^{2}$ & $ 9.510 \times 10^{2}$\\ \cline{3-5} &  & $ 6 \%$ & $ 4.735 \times 10^{2}$ & $ 8.334 \times 10^{2}$\\ \cline{3-5} &  & $4\%$ & $ 6.014 \times 10^{2}$ & $ 7.455 \times 10^{2}$\\ \cline{2-5} &  \multirow{3}{*}{4}  & $12\%$ & $ 3.399 \times 10^{2}$ & $ 1.048 \times 10^{3}$\\ \cline{3-5} &  & $ 6 \%$ & $ 4.812 \times 10^{2}$ & $ 1.146 \times 10^{3}$\\ \cline{3-5} &  & $4\%$ & $ 5.913 \times 10^{2}$ & $ 9.026 \times 10^{2}$\\ \cline{2-5} &  \multirow{3}{*}{5}  & $12\%$ & $ 3.626 \times 10^{2}$ & $ 1.101 \times 10^{3}$\\ \cline{3-5} &  & $ 6 \%$ & $ 5.107 \times 10^{2}$ & $ 1.575 \times 10^{3}$\\ \cline{3-5} &  & $4\%$ & $ 6.122 \times 10^{2}$ & $ 1.102 \times 10^{3}$\\ \cline{1-5} 
\end{tabular}
}
\caption{RMSE values of the wind-components for the EnKF-MC and LETKF making use of the SPEEDY model.}
\label{tab:exp-RMSE-values-wind-components-sparse}
\end{table}

\begin{table}[H]
\centering
{\footnotesize
\begin{tabular}{|c|c|c|c|c|} \hline
Variable (units) &$\ra$ & $p$ & EnKF-MC & LETKF \\ \hline
\multirow{15}{*}{Temperature ($K$)}  &  \multirow{3}{*}{1}  & $12\%$ & $ 6.054 \times 10^{2}$ & $ 6.033 \times 10^{2}$\\ \cline{3-5} &  & $ 6 \%$ & $ 5.692 \times 10^{2}$ & $ 6.704 \times 10^{2}$\\ \cline{3-5} &  & $4\%$ & $ 6.522 \times 10^{2}$ & $ 8.073 \times 10^{2}$\\ \cline{2-5} &  \multirow{3}{*}{2}  & $12\%$ & $ 5.680 \times 10^{2}$ & $ 6.693 \times 10^{2}$\\ \cline{3-5} &  & $ 6 \%$ & $ 5.193 \times 10^{2}$ & $ 5.556 \times 10^{2}$\\ \cline{3-5} &  & $4\%$ & $ 5.299 \times 10^{2}$ & $ 5.529 \times 10^{2}$\\ \cline{2-5} &  \multirow{3}{*}{3}  & $12\%$ & $ 5.279 \times 10^{2}$ & $ 1.217 \times 10^{3}$\\ \cline{3-5} &  & $ 6 \%$ & $ 4.982 \times 10^{2}$ & $ 6.458 \times 10^{2}$\\ \cline{3-5} &  & $4\%$ & $ 4.926 \times 10^{2}$ & $ 6.073 \times 10^{2}$\\ \cline{2-5} &  \multirow{3}{*}{4}  & $12\%$ & $ 5.023 \times 10^{2}$ & $ 1.817 \times 10^{3}$\\ \cline{3-5} &  & $ 6 \%$ & $ 4.757 \times 10^{2}$ & $ 1.030 \times 10^{3}$\\ \cline{3-5} &  & $4\%$ & $ 4.766 \times 10^{2}$ & $ 7.464 \times 10^{2}$\\ \cline{2-5} &  \multirow{3}{*}{5}  & $12\%$ & $ 4.898 \times 10^{2}$ & $ 1.600 \times 10^{3}$\\ \cline{3-5} &  & $ 6 \%$ & $ 4.644 \times 10^{2}$ & $ 1.473 \times 10^{3}$\\ \cline{3-5} &  & $4\%$ & $ 4.684 \times 10^{2}$ & $ 1.172 \times 10^{3}$\\ \cline{1-5} \multirow{15}{*}{Specific Humidity ($g/Kg$)}  &  \multirow{3}{*}{1}  & $12\%$ & $ 9.862 \times 10^{1}$ & $ 9.026 \times 10^{1}$\\ \cline{3-5} &  & $ 6 \%$ & $ 1.133 \times 10^{2}$ & $ 1.449 \times 10^{2}$\\ \cline{3-5} &  & $4\%$ & $ 1.405 \times 10^{2}$ & $ 1.941 \times 10^{2}$\\ \cline{2-5} &  \multirow{3}{*}{2}  & $12\%$ & $ 1.029 \times 10^{2}$ & $ 1.125 \times 10^{2}$\\ \cline{3-5} &  & $ 6 \%$ & $ 1.146 \times 10^{2}$ & $ 1.137 \times 10^{2}$\\ \cline{3-5} &  & $4\%$ & $ 1.270 \times 10^{2}$ & $ 1.321 \times 10^{2}$\\ \cline{2-5} &  \multirow{3}{*}{3}  & $12\%$ & $ 1.068 \times 10^{2}$ & $ 1.341 \times 10^{2}$\\ \cline{3-5} &  & $ 6 \%$ & $ 1.205 \times 10^{2}$ & $ 1.418 \times 10^{2}$\\ \cline{3-5} &  & $4\%$ & $ 1.317 \times 10^{2}$ & $ 1.458 \times 10^{2}$\\ \cline{2-5} &  \multirow{3}{*}{4}  & $12\%$ & $ 1.065 \times 10^{2}$ & $ 1.640 \times 10^{2}$\\ \cline{3-5} &  & $ 6 \%$ & $ 1.246 \times 10^{2}$ & $ 1.652 \times 10^{2}$\\ \cline{3-5} &  & $4\%$ & $ 1.324 \times 10^{2}$ & $ 1.739 \times 10^{2}$\\ \cline{2-5} &  \multirow{3}{*}{5}  & $12\%$ & $ 1.089 \times 10^{2}$ & $ 2.078 \times 10^{2}$\\ \cline{3-5} &  & $ 6 \%$ & $ 1.301 \times 10^{2}$ & $ 1.950 \times 10^{2}$\\ \cline{3-5} &  & $4\%$ & $ 1.373 \times 10^{2}$ & $ 2.068 \times 10^{2}$\\ \cline{1-5}
\end{tabular}
}
\caption{RMSE values for the EnKF-MC and LETKF making use of the SPEEDY model.}
\label{tab:exp-RMSE-values-others-sparse}
\end{table}
\begin{figure}[H]
\centering
\begin{subfigure}{0.5\textwidth}
\centering
\includegraphics[width=0.9\textwidth,height=0.9\textwidth]{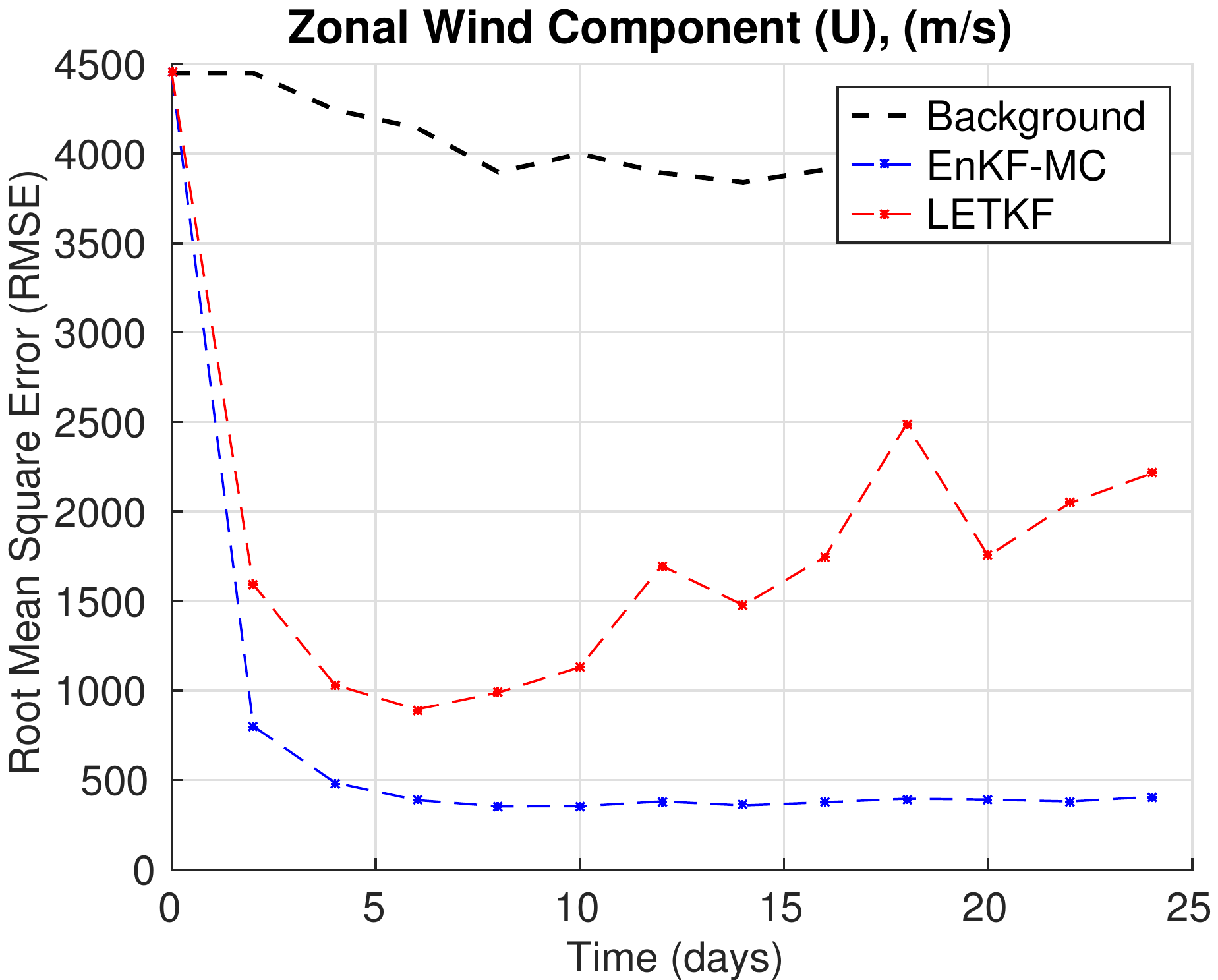}
\caption{$\ra = 3$ and $p=12\%$ }
\end{subfigure}%
\begin{subfigure}{0.5\textwidth}
\centering
\includegraphics[width=0.9\textwidth,height=0.9\textwidth]{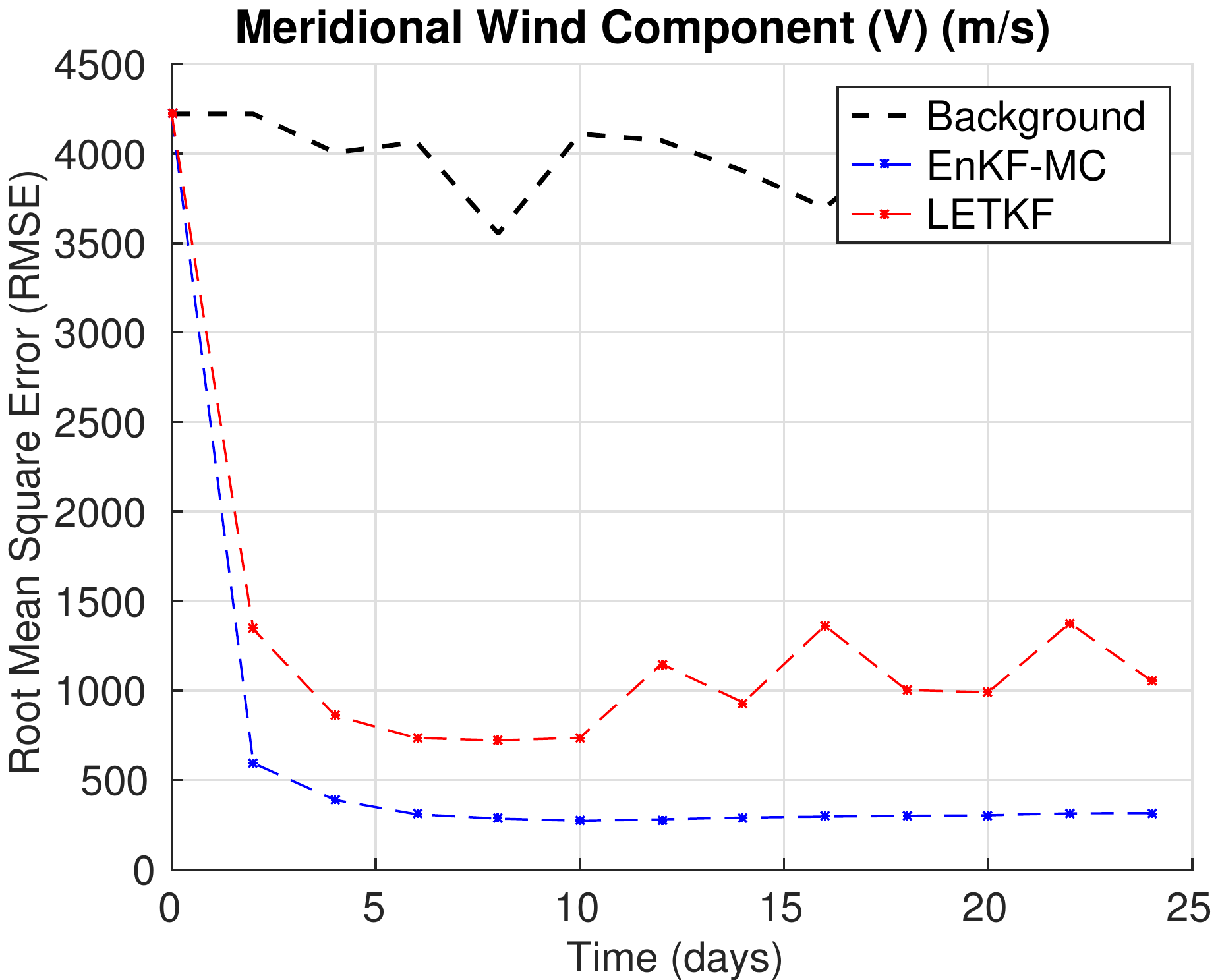}
\caption{$\ra = 4$ and $p=12\%$ }
\end{subfigure}%
\caption{RMSE of the LETKF and EnKF-MC implementations for different model variables, radii of influence and observational networks. }
\label{fig:exp-model-variables}
\end{figure}
\begin{figure}[H]
\centering
\begin{subfigure}{0.5\textwidth}
\centering
\includegraphics[width=0.9\textwidth,height=0.8\textwidth]{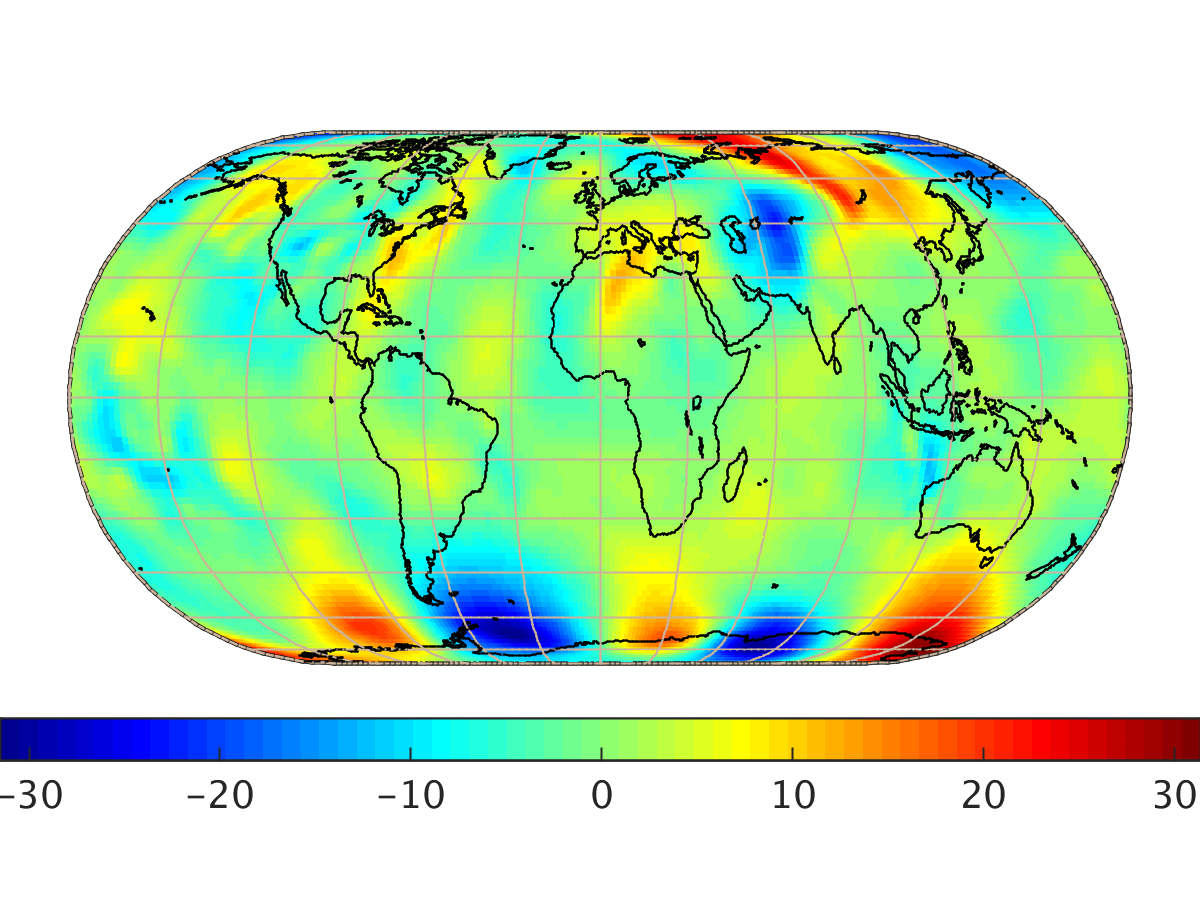}
\caption{Reference}
\end{subfigure}%
\begin{subfigure}{0.5\textwidth}
\centering
\includegraphics[width=0.9\textwidth,height=0.8\textwidth]{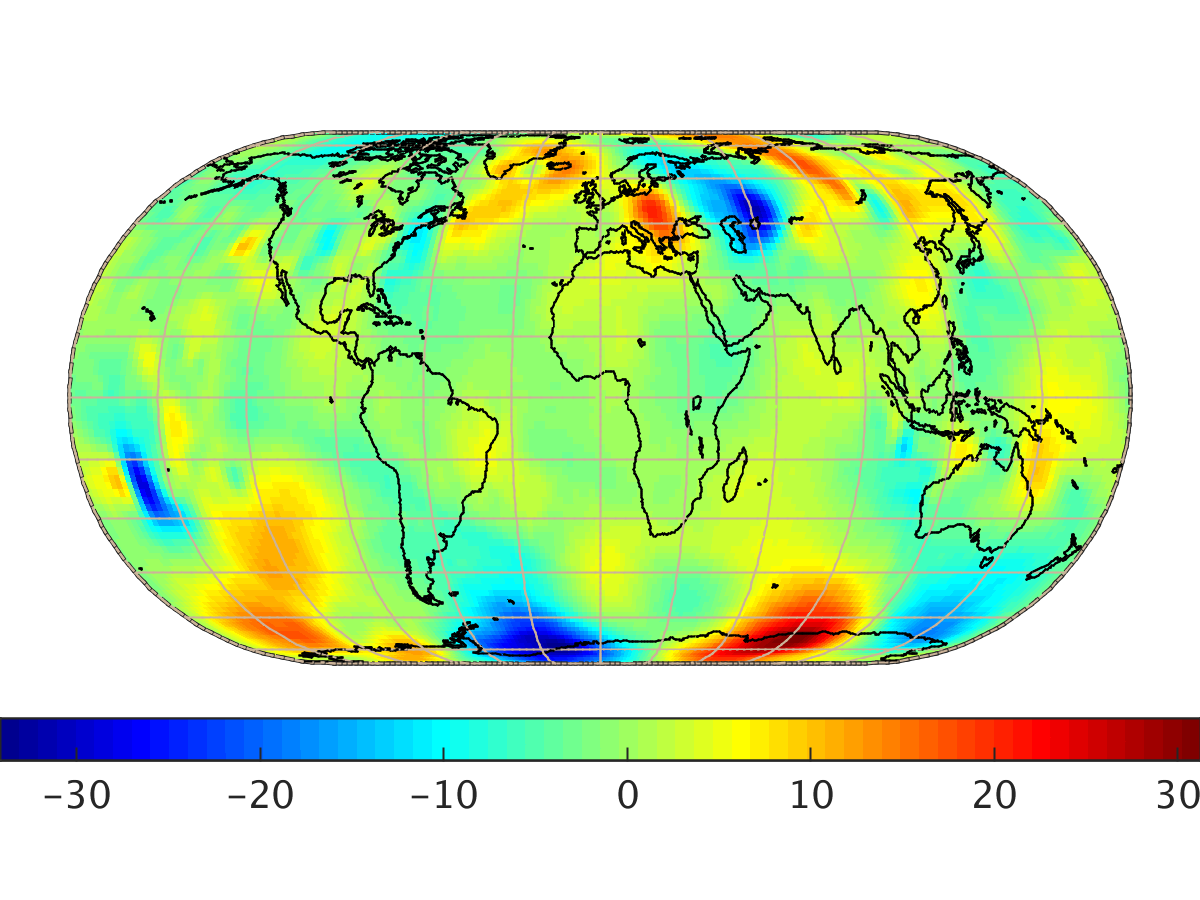}
\caption{Background}
\end{subfigure}

\begin{subfigure}{0.5\textwidth}
\centering
\includegraphics[width=0.9\textwidth,height=0.8\textwidth]{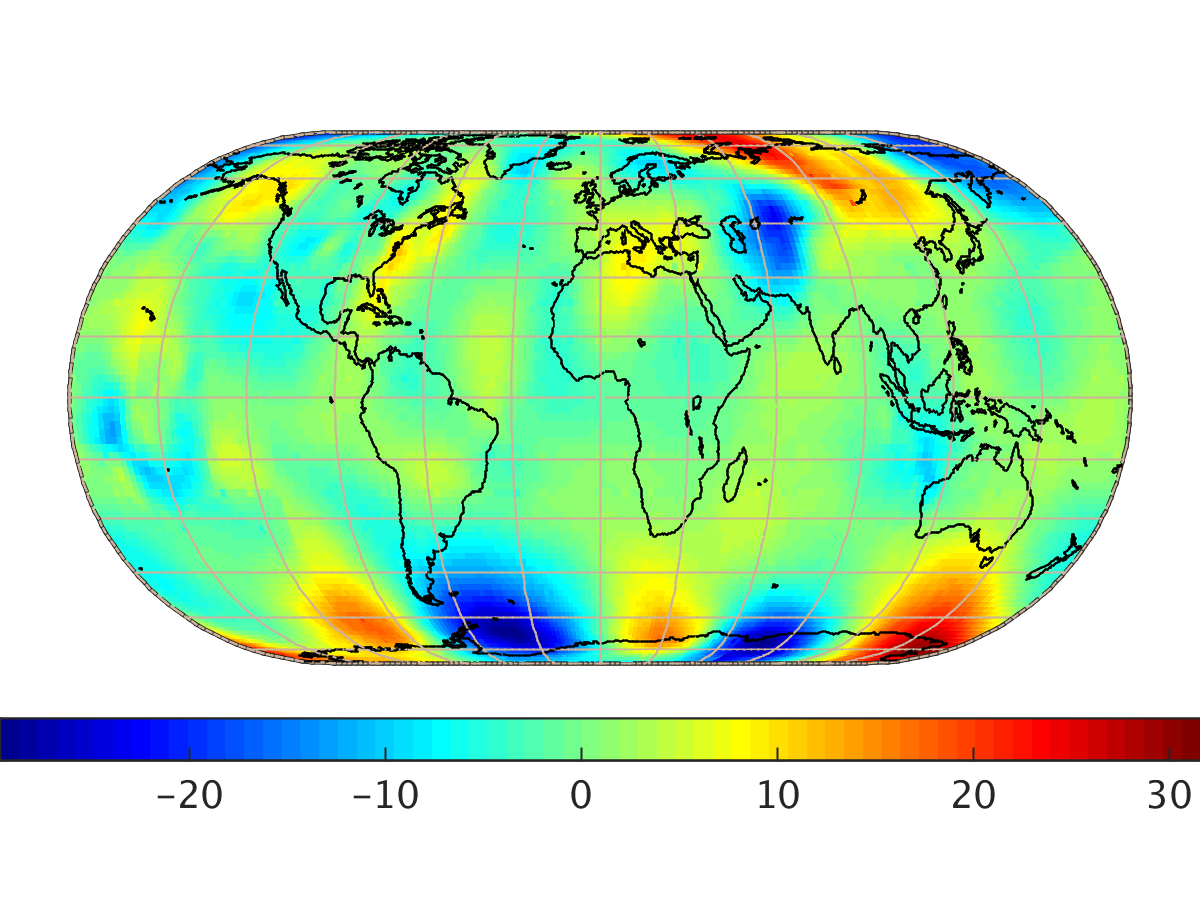}
\caption{EnKF-MC}
\end{subfigure}%
\begin{subfigure}{0.5\textwidth}
\centering
\includegraphics[width=0.9\textwidth,height=0.8\textwidth]{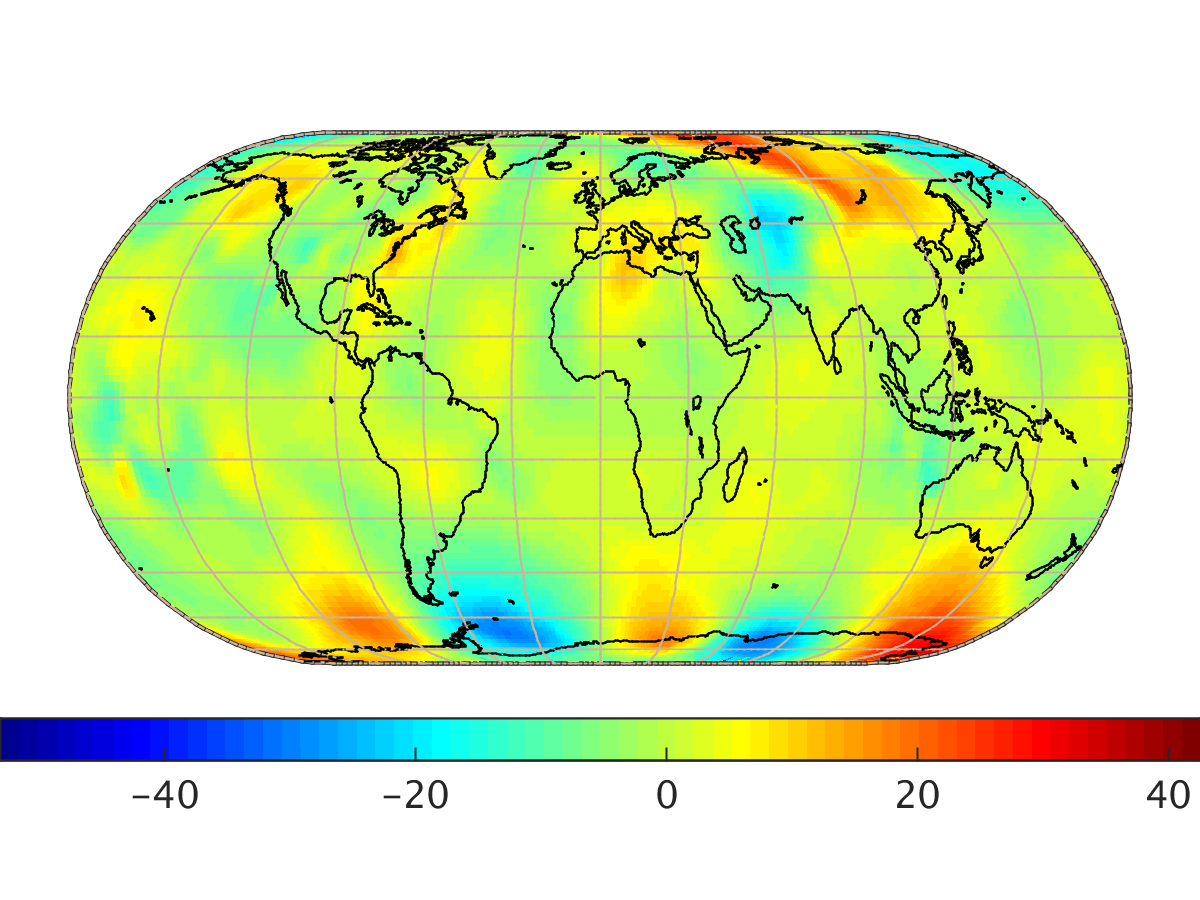}
\caption{LETKF}
\end{subfigure}
\caption{Snapshots of the reference solution, background state, and analysis fields from the EnKF-MC and LETKF for the fifth layer of the meridional wind component ($v$).}
\label{fig:exp-snapshot-meridional-wind-component}
\end{figure}
\begin{figure}[H]
\centering
\begin{subfigure}{0.5\textwidth}
\centering
\includegraphics[width=0.9\textwidth,height=0.8\textwidth]{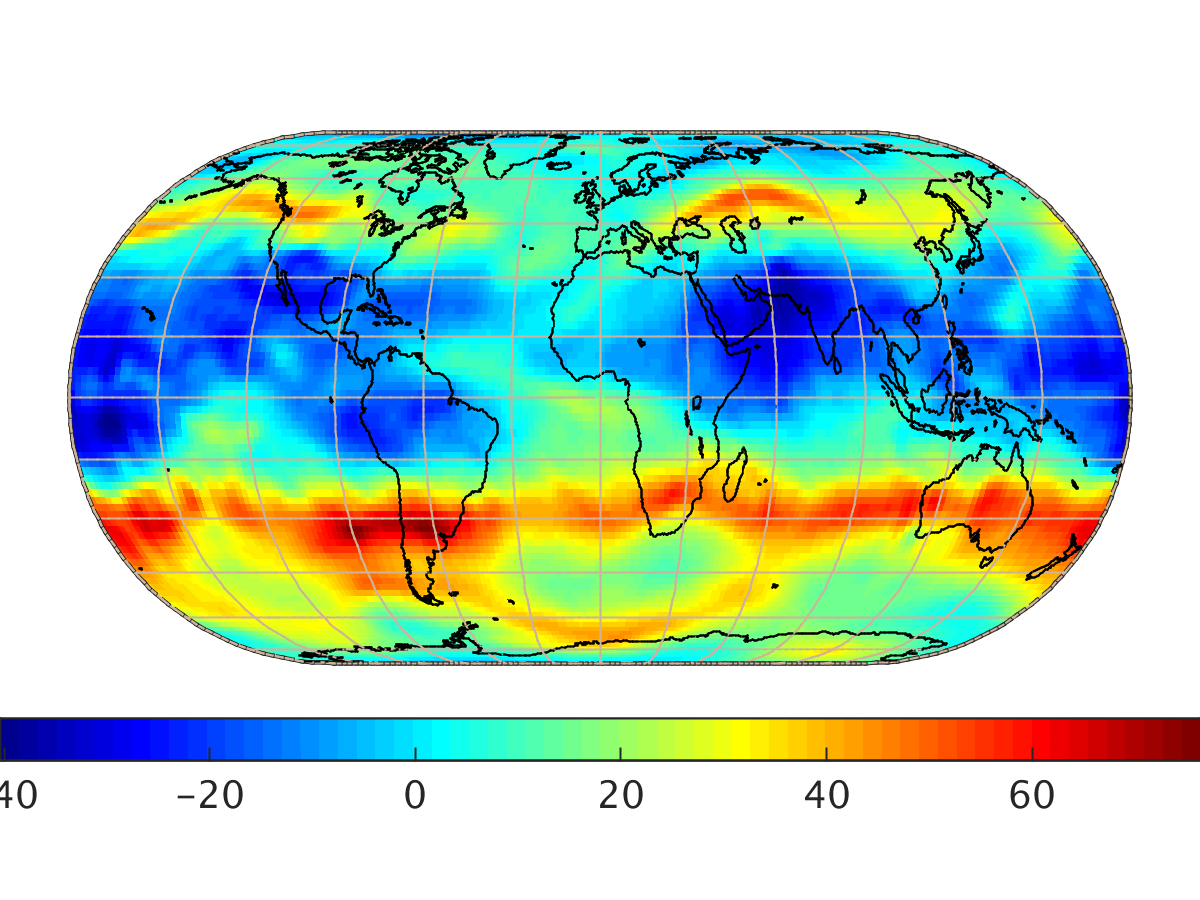}
\caption{Reference}
\end{subfigure}%
\begin{subfigure}{0.5\textwidth}
\centering
\includegraphics[width=0.9\textwidth,height=0.8\textwidth]{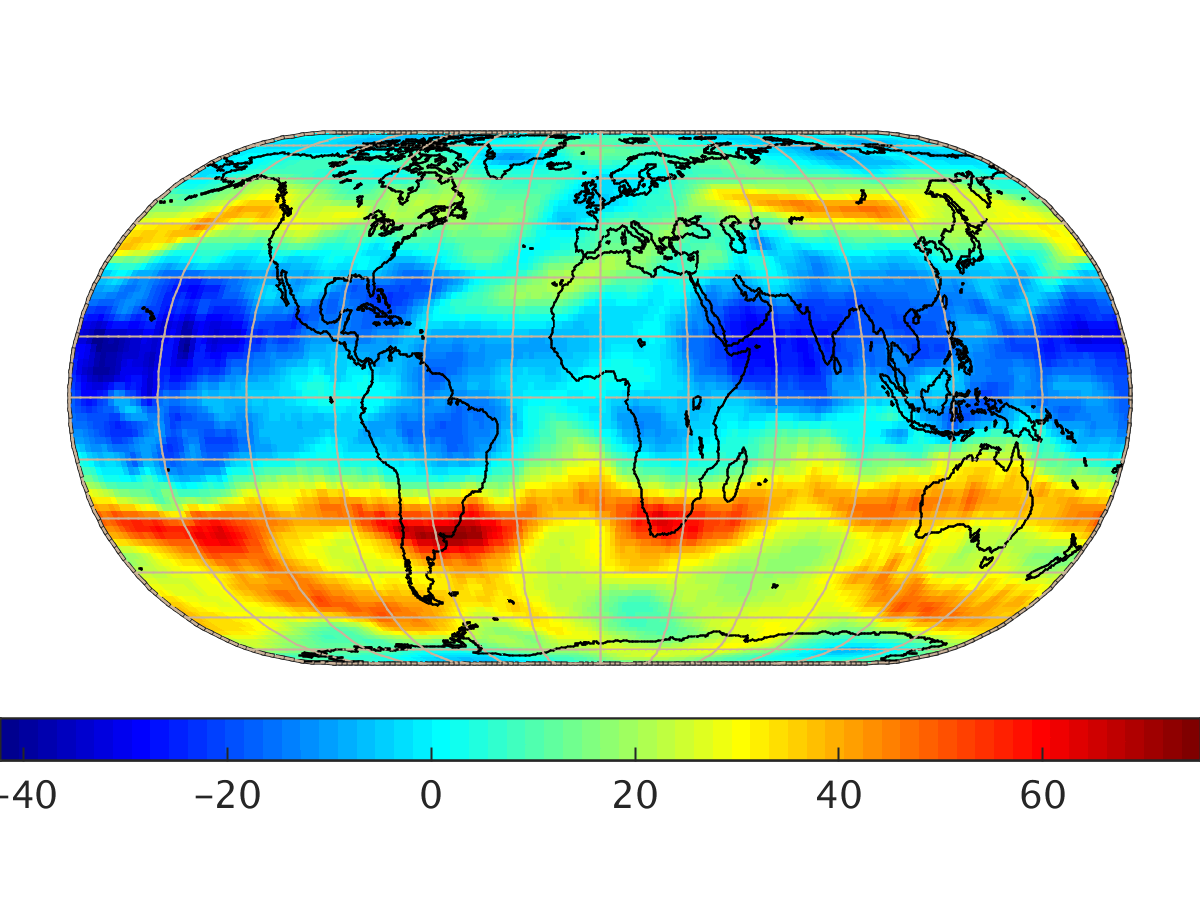}
\caption{Background}
\end{subfigure}

\begin{subfigure}{0.5\textwidth}
\centering
\includegraphics[width=0.9\textwidth,height=0.8\textwidth]{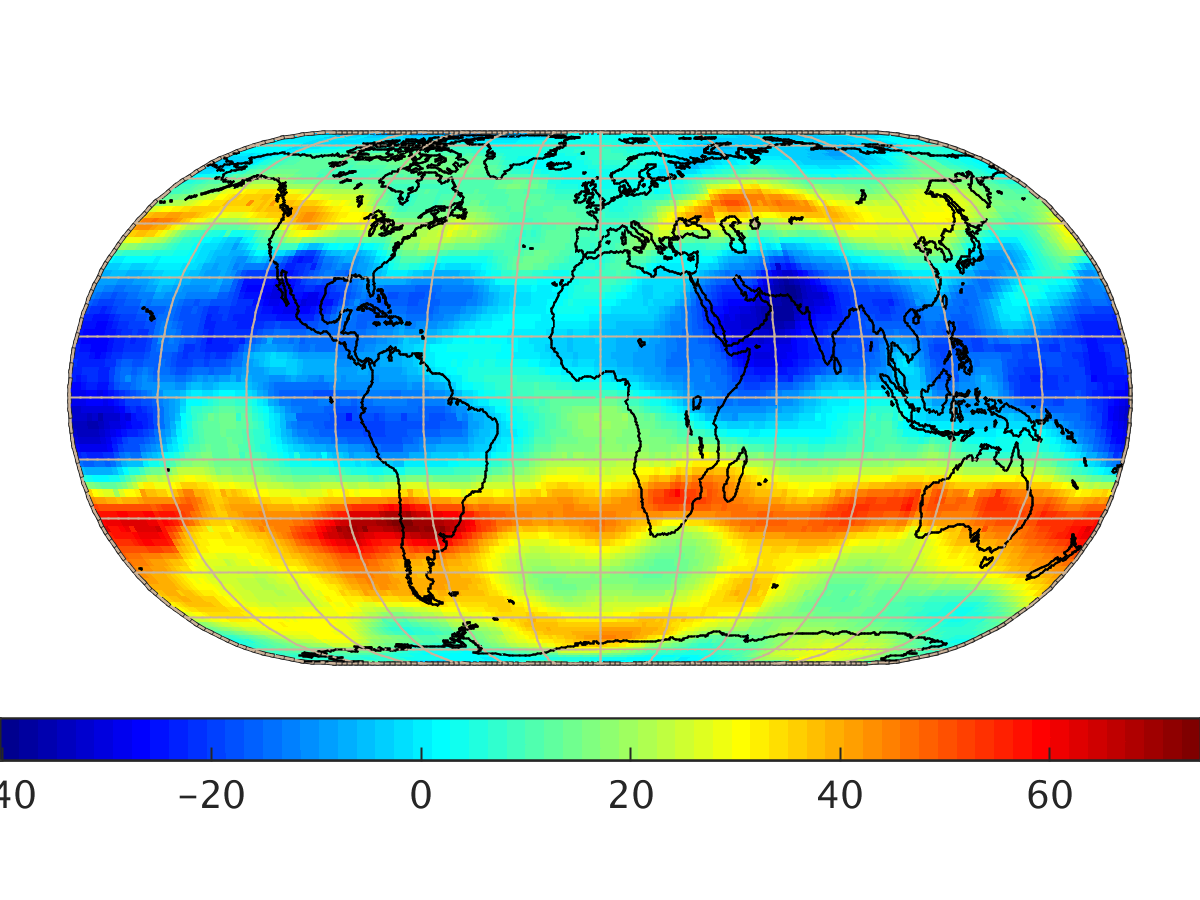}
\caption{EnKF-MC}
\end{subfigure}%
\begin{subfigure}{0.5\textwidth}
\centering
\includegraphics[width=0.9\textwidth,height=0.8\textwidth]{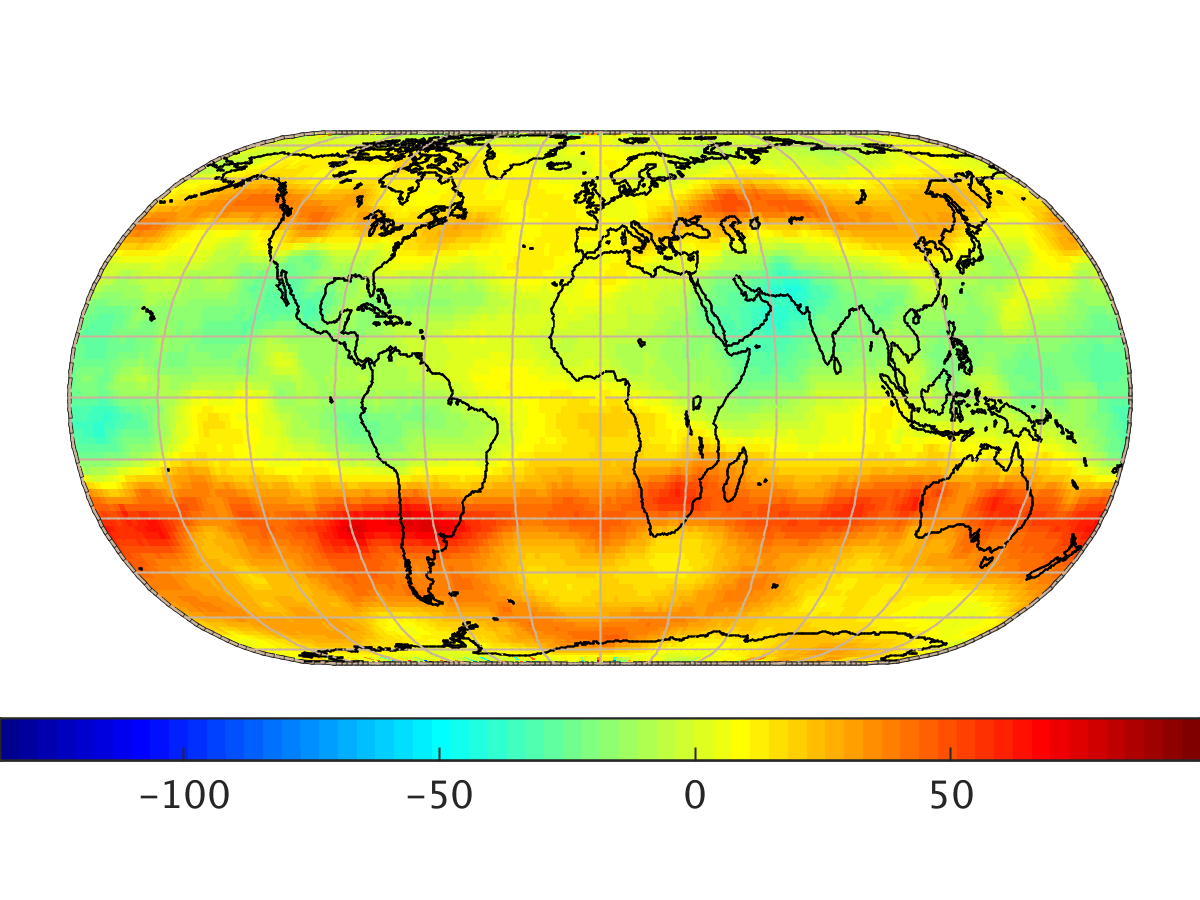}
\caption{LETKF}
\end{subfigure}
\caption{Snapshots of the reference solution, background state, and analysis fields from the EnKF-MC and LETKF for the second layer of the zonal wind component ($u$).}
\label{fig:exp-snapshot-zonal-wind-component}
\end{figure}

\subsection{Statistics of the ensemble}

In this section, we briefly discuss the spread of the ensemble making use of rank histograms. Of course, we do not claim this to be a verification procedure but, it provides useful insights about the dispersion of the members and the level of uncertainty about the ensemble mean.  The plots are based on the 5-th numerical layer of the atmosphere. We collect information across all model variables and the plots are shown in figures \ref{fig:bin-sph}, \ref{fig:bin-tem}, \ref{fig:bin-uwc}, and \ref{fig:bin-vwc}. Based on the results, the proposed implementation seems to be lesser sensitive to the intrinsic need of inflation than the LETKF formulation. For instance, after the assimilation, the ensemble members from the EnKF-MC are spread almost uniformly across different observation times. On the other hand, the spread in the context of the LETKF is impacted by the constant inflation factor used during the experiments (1.04) In practice, the inflation factor is set up according to historical information and/or heuristically with regard to some properties of the dynamics of the numerical model. This implies that, the dispersion of the LETKF members after the analysis will rely in how-well we estimate the optimal inflation factor for such filter. In operational data assimilation, an answer to this question can be hard to find. We think that inflation methodologies such as adaptive inflation can lead to better spread of the ensemble members in the context of the LETKF. For the proposed method, based on the experimental results, such methodology is not needed.
\begin{figure}[H]
\centering
\begin{subfigure}{0.5\textwidth}
\centering
\includegraphics[width=1\textwidth]{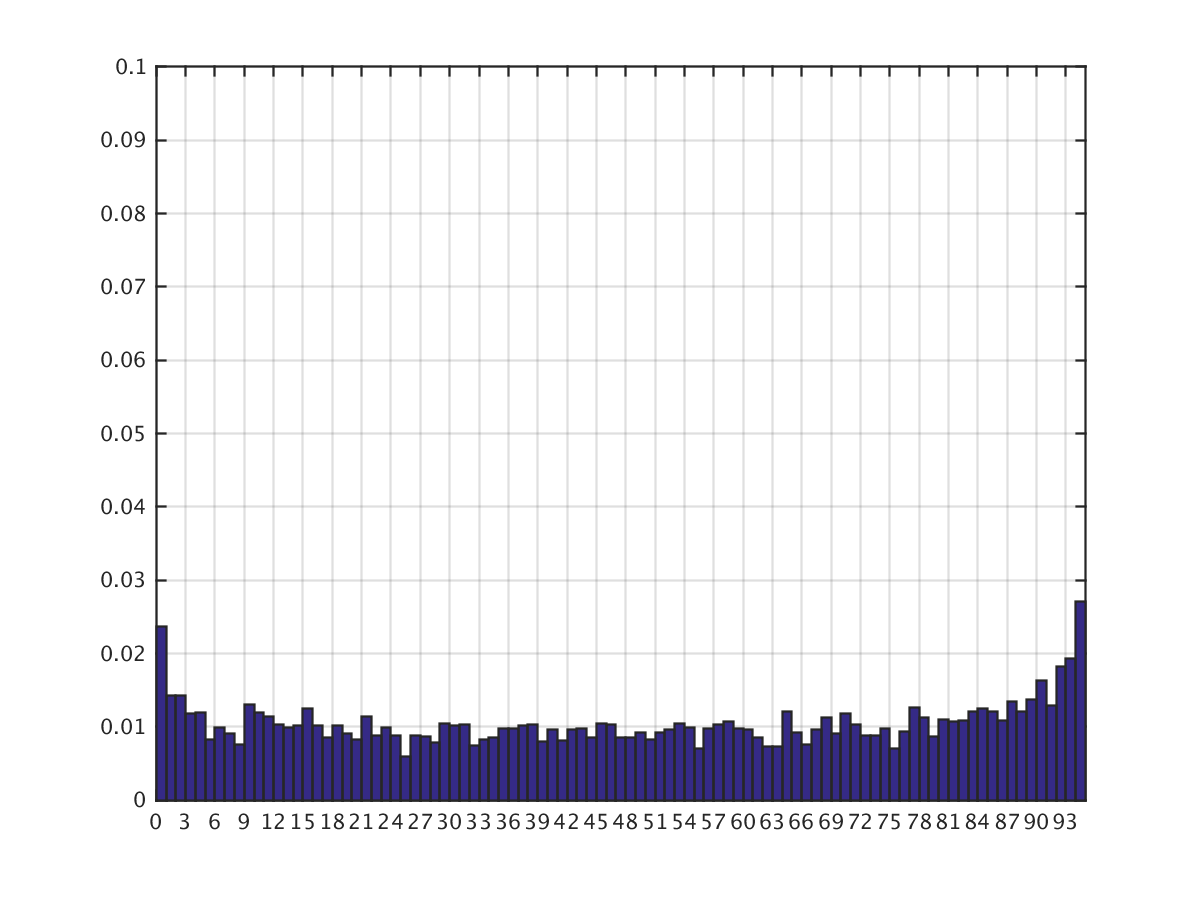}
\caption{EnKF-MC}
\end{subfigure}%
\begin{subfigure}{0.5\textwidth}
\centering
\includegraphics[width=1\textwidth]{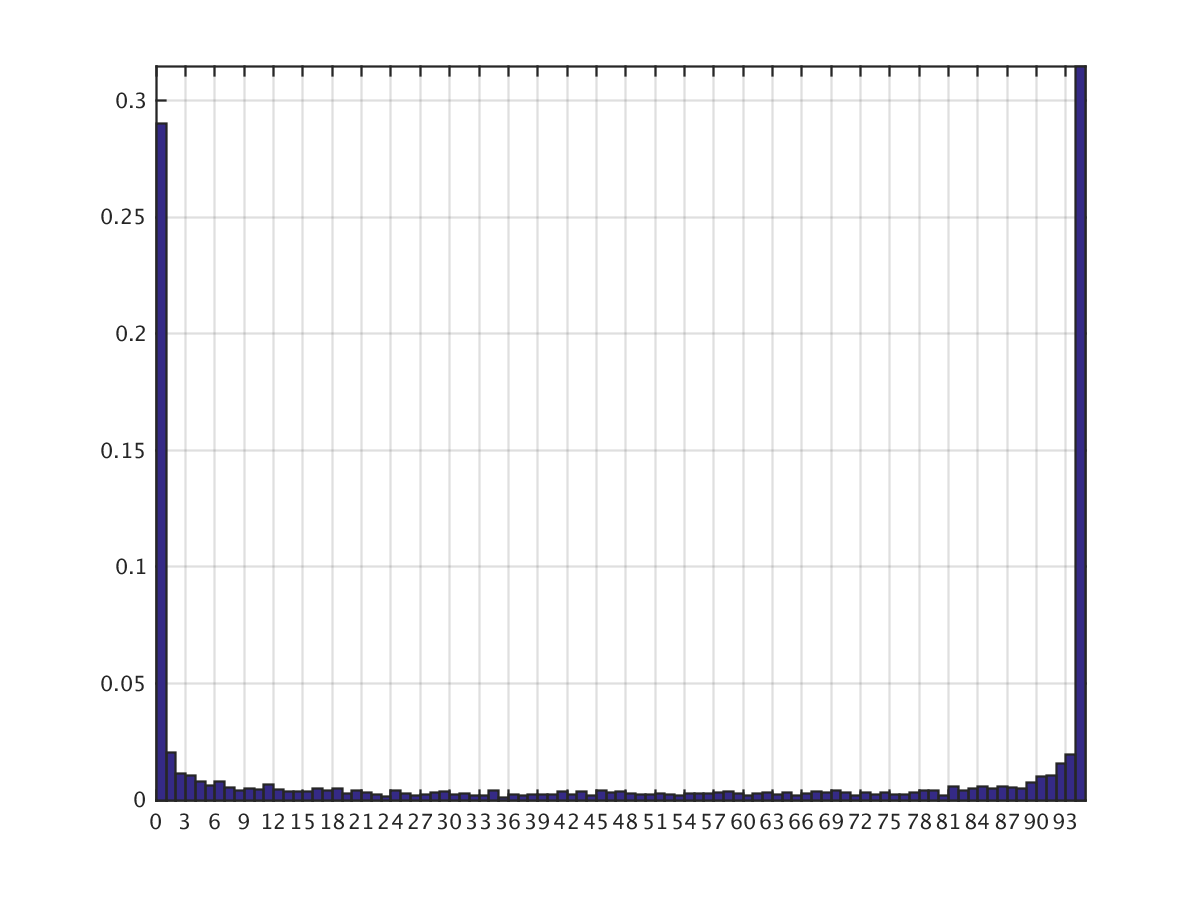}
\caption{LETKF}
\end{subfigure}%
\caption{Rank-histograms for the Specific Humidity model variable. The information is collected from the 5-th model layer.}
\label{fig:bin-sph}
\end{figure}
\begin{figure}[H]
\centering
\begin{subfigure}{0.5\textwidth}
\centering
\includegraphics[width=1\textwidth]{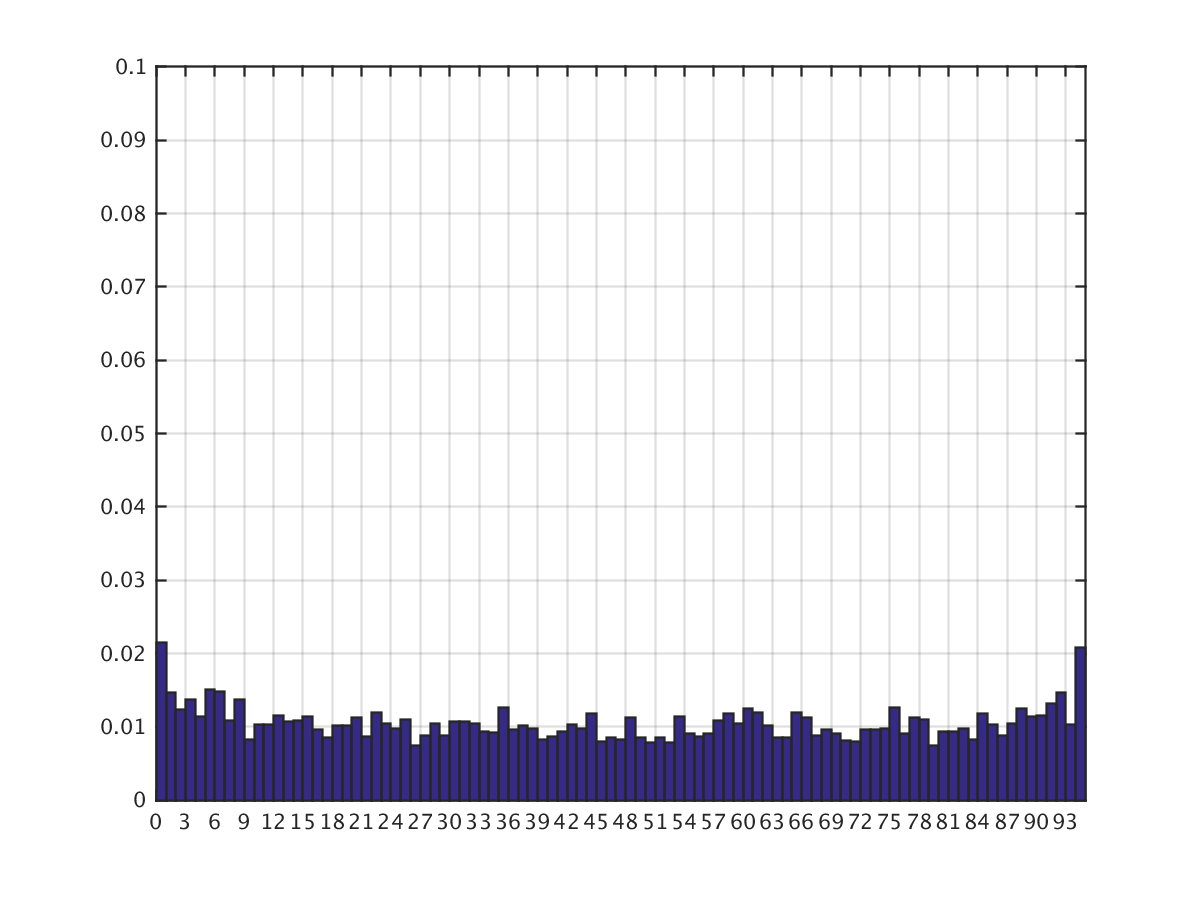}
\caption{EnKF-MC}
\end{subfigure}%
\begin{subfigure}{0.5\textwidth}
\centering
\includegraphics[width=1\textwidth]{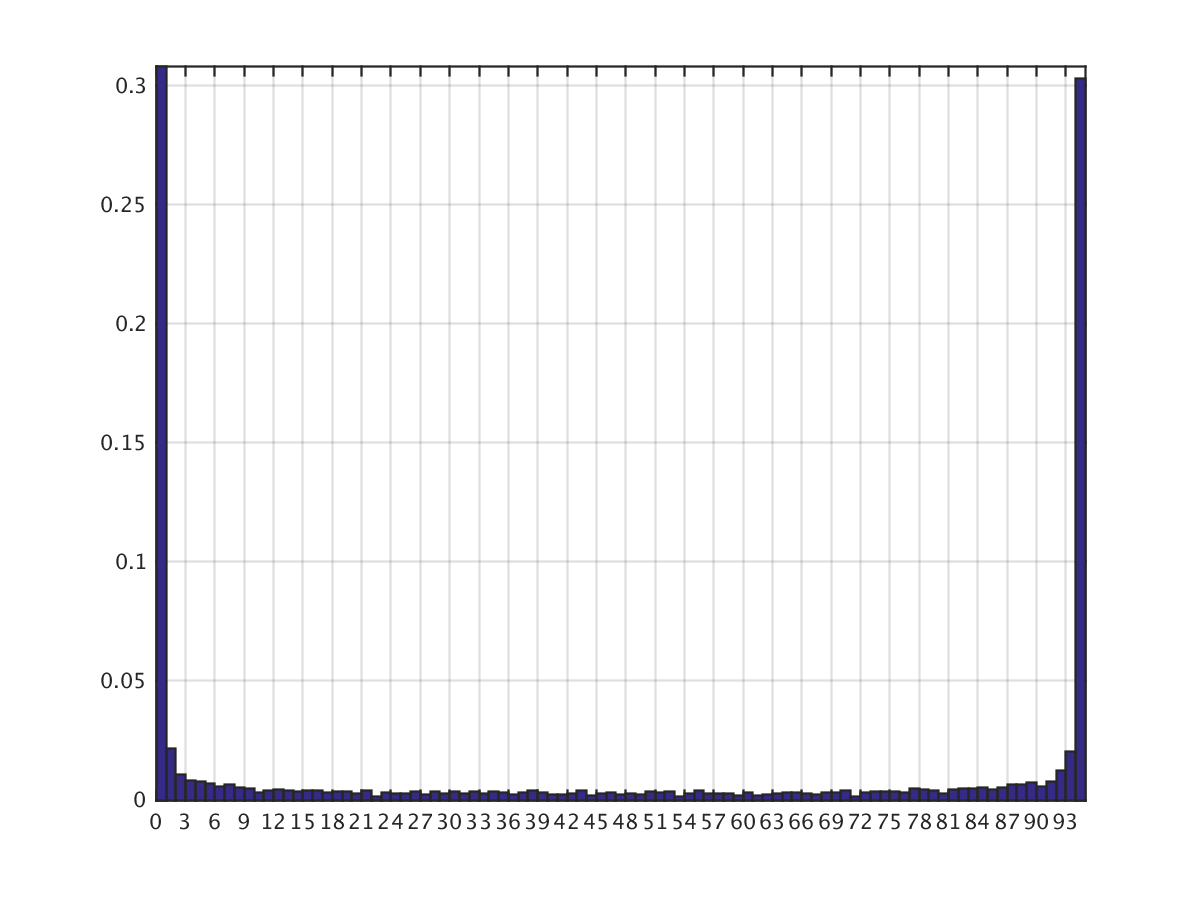}
\caption{LETKF}
\end{subfigure}%
\caption{Rank-histograms for the Zonal Wind Component model variable. The information is collected from the 5-th model layer.}
\label{fig:bin-uwc}
\end{figure}
\begin{figure}[H]
\centering
\begin{subfigure}{0.5\textwidth}
\centering
\includegraphics[width=1\textwidth]{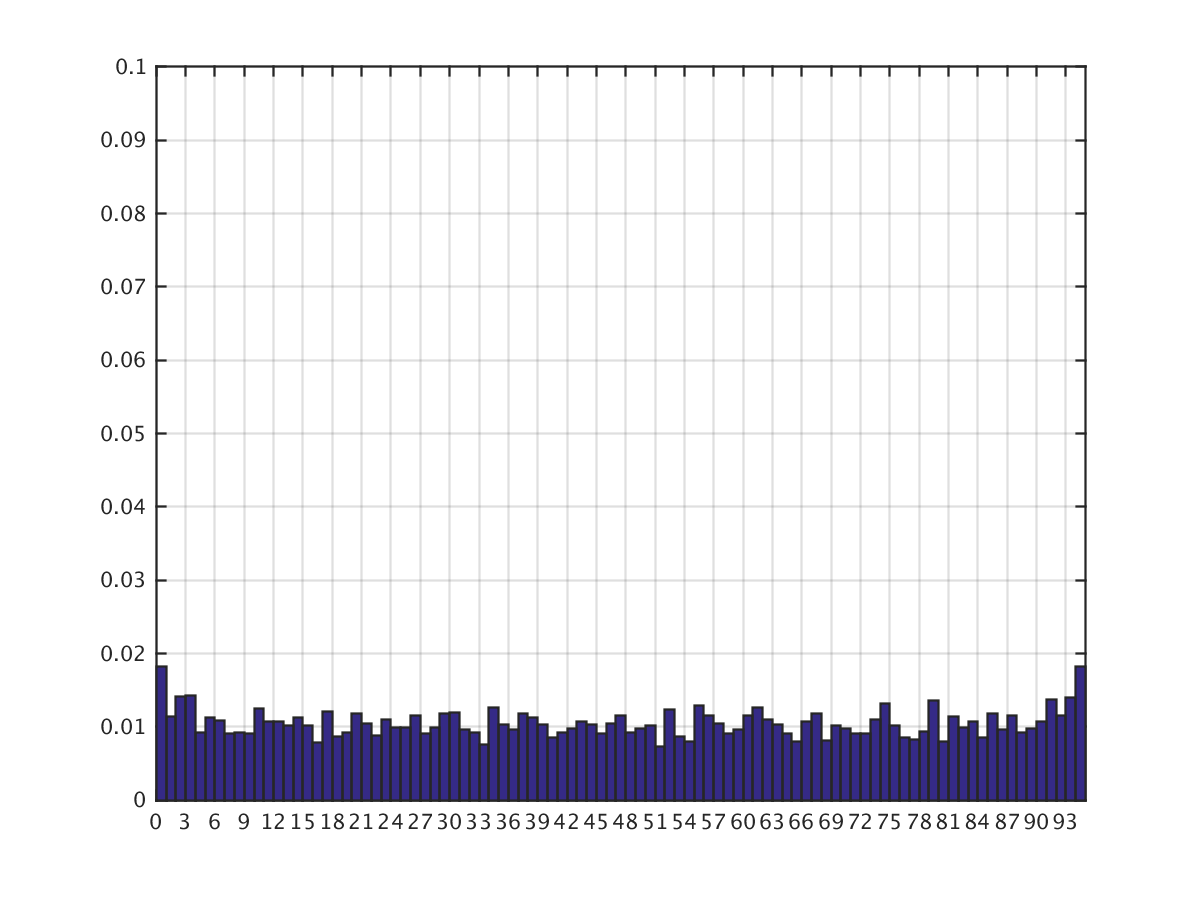}
\caption{EnKF-MC}
\end{subfigure}%
\begin{subfigure}{0.5\textwidth}
\centering
\includegraphics[width=1\textwidth]{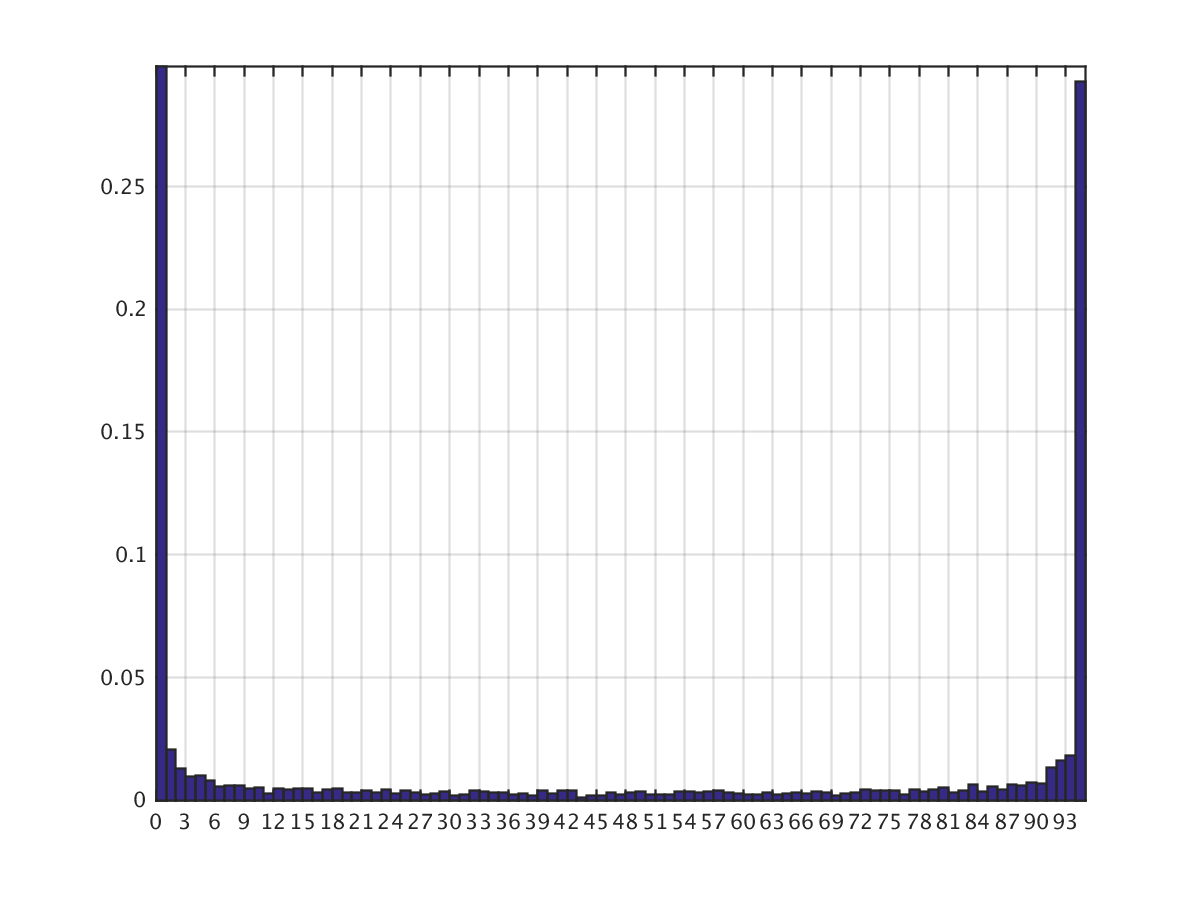}
\caption{LETKF}
\end{subfigure}%
\caption{Rank-histograms for the Meridional Wind Component model variable. The information is collected from the 5-th model layer.}
\label{fig:bin-vwc}
\end{figure}
\begin{figure}[H]
\centering
\begin{subfigure}{0.5\textwidth}
\centering
\includegraphics[width=1\textwidth]{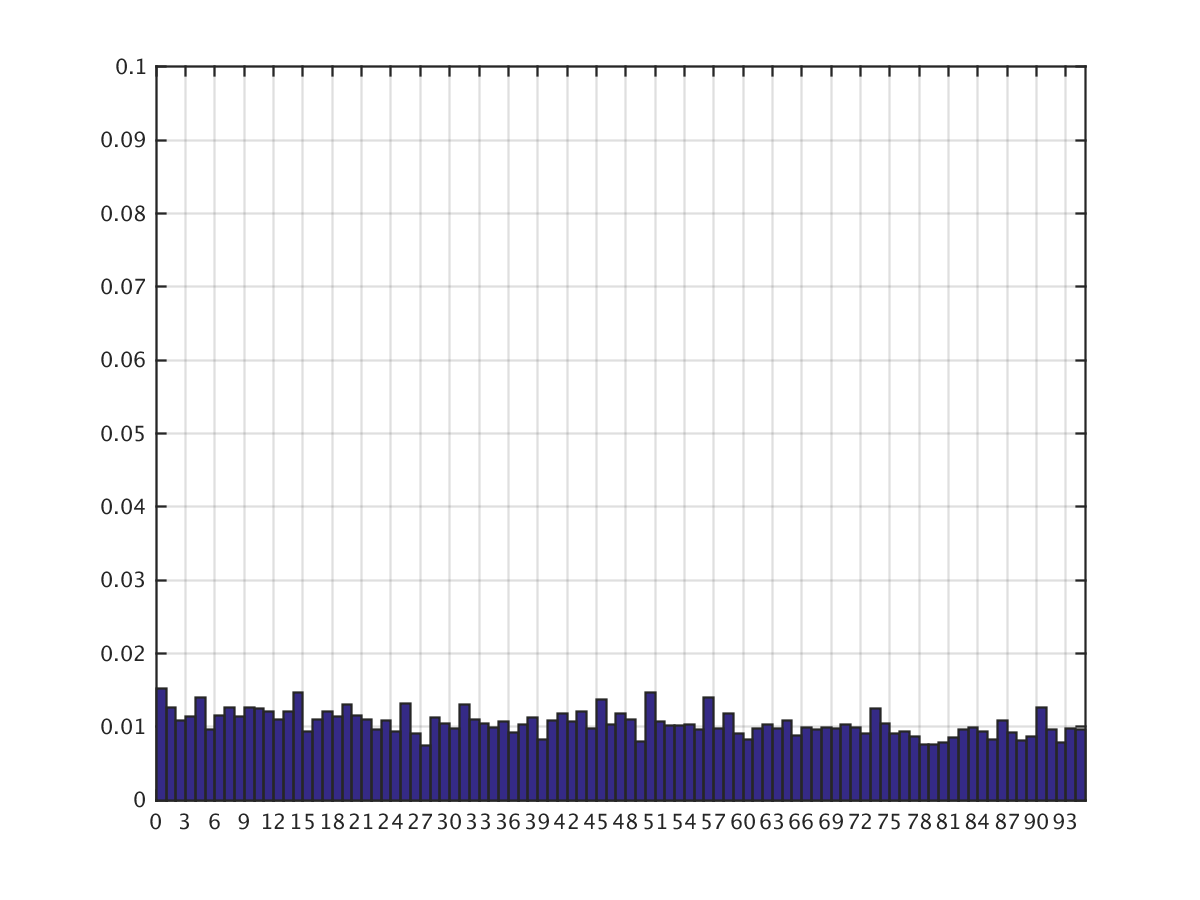}
\caption{EnKF-MC}
\end{subfigure}%
\begin{subfigure}{0.5\textwidth}
\centering
\includegraphics[width=1\textwidth]{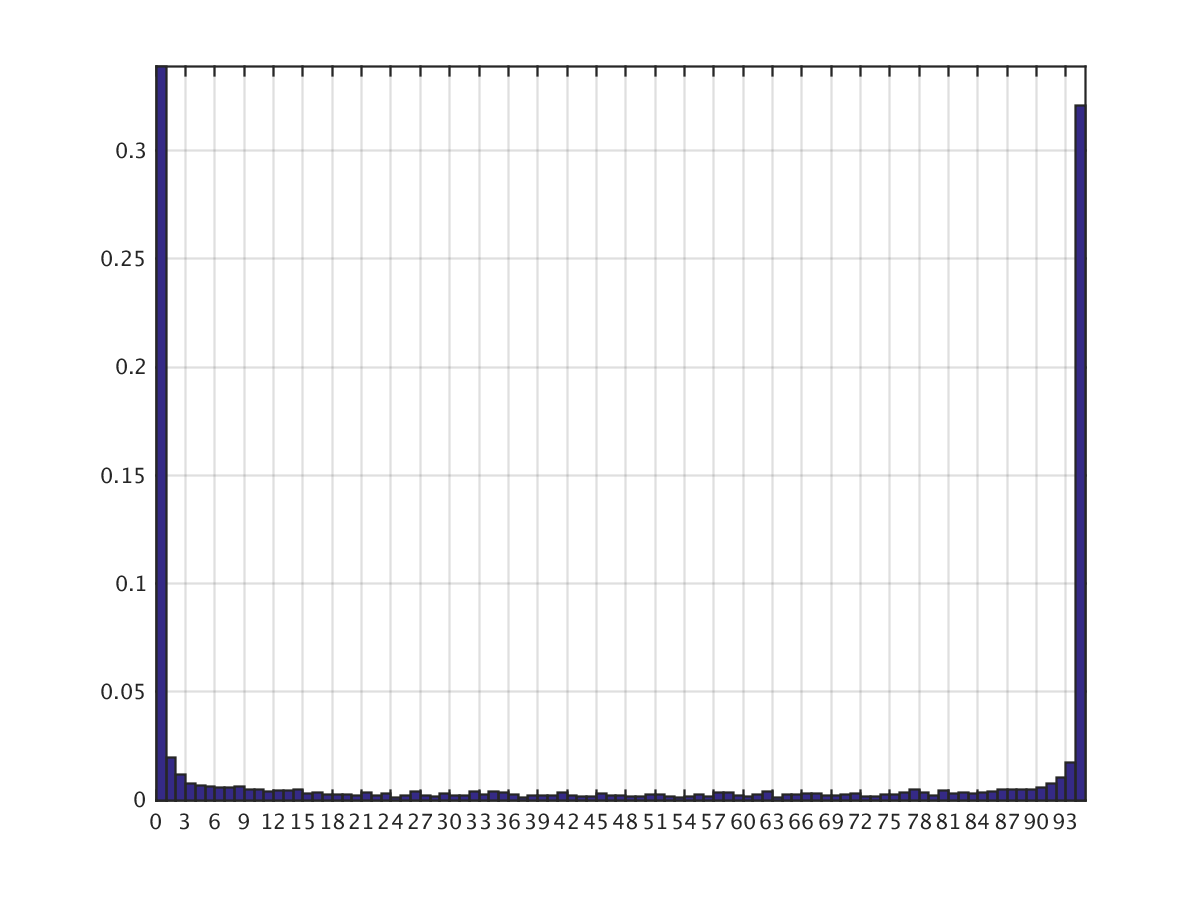}
\caption{LETKF}
\end{subfigure}%
\caption{Rank-histograms for the Temperature model variable. The information is collected from the 5-th model layer.}
\label{fig:bin-tem}
\end{figure}

\subsection{The impact of SVD truncation threshold}
\label{sec:future-work}

An important question arising from this research is the number of singular values/vectors to be used in \eqref{eq:EnKF-MC-truncated-SVD}. To study this question we use the same experimental setting and the sparse observational network where only $4\%$ of the model components are observed. We apply EnKF-MC algorithm and truncate the summation \eqref{eq:EnKF-MC-truncated-SVD} based on different thresholds $\sigma_r$. 

The results are reported in Figure \ref{fig:exp-RMSE-different-thresholds}. Different thresholds lead to different levels of accuracy for the EnKF-MC analyses. There is no unique value of $\sigma_r$ that provides the best ensemble trajectory in general; for instance, the best performance at the beginning of the assimilation window is obtained for $\sigma_r = 0.05$, but, at the end the best solution is obtained with $\sigma_r = 0.2$. This indicates that the results can be improved when $\sigma_r$ is dynamically and optimally chosen. Note that, on average, the results obtained by the EnKF-MC with $\sigma_r \in \lle 0.15,\, 0.20,\, 0.25 \rle$ are much better than those when $\sigma_r = 0.10$ (and therefore much better than the results obtained by the LETKF). In Figure \ref{fig:exp-RMSE-snapshots-thresholds} snapshots of the specific humidity for different $\sigma_r$ are shown. It can be seen that the spurious errors can be quickly decreased when $\sigma_r$ is chosen accordingly. 

In order to understand the optimal truncation level note that the summation \eqref{eq:EnKF-MC-truncated-SVD} can be written as follows:
\begin{eqnarray}
\label{eq:alpha_j}
\be_{[i]} &=& \sum_{j=1}^{\Nens} \alpha_j \cdot \ub^{\Z_{[i]}}_j,\,  \\
\nonumber
\alpha_j &=& \frac{1}{\tau_j} \cdot {\vv^{\Z_{[i]}}_j}^T \cdot \x_{[i]} = \frac{1}{\tau_j} \cdot {\vv^{\Z_{[i]}}_j}^T \cdot \lb \widetilde{\x}_{[i]} + \errR_{[i]} \rb  \\
\nonumber
&=& \underbrace{  \frac{1}{\tau_j}  \cdot {\vv^{\Z_{[i]}}_j}^T \cdot \widetilde{\x}_{[i]} }_{\text{Uncorrupted data}} + \underbrace{\frac{1}{\tau_j} \cdot {\vv^{\Z_{[i]}}_j}^T \cdot \errR_{[i]}}_{\text{Error}}
\end{eqnarray}
where $\widetilde{\x}_{[i]}$ is the perfect data ($\x_{[i]} = \widetilde{\x}_{[i]}+\errR_{[i]}$). The components with small singular values $\tau_j$ will amplify the error more. The threshold should be large enough to include useful information from $\widetilde{\x}_{[i]}$, but small enough in order to prune out the components with large error amplification.
We expect that model components with large variances will need more basis vectors from \eqref{eq:EnKF-MC-truncated-SVD} than those with lesser variance. An upper bound for the number of basis vectors (and therefore the threshold $\sigma_r$) can be obtained by inspection of the values $\alpha_j$ in \eqref{eq:alpha_j}. Figure \ref{fig:exp-random-noise-effect} shows the weights $\alpha_j$ for different singular values for the 500-th model component of the SPEEDY model. The large zig-zag behaviors are evidence of error amplifications and therefore, we can truncate the summation \eqref{eq:alpha_j} before this pattern starts to take place in the values of $\alpha_j$.
\begin{figure}[H]
\centering
\begin{subfigure}{0.5\textwidth}
\centering
\includegraphics[width=0.9\textwidth,height=0.75\textwidth]{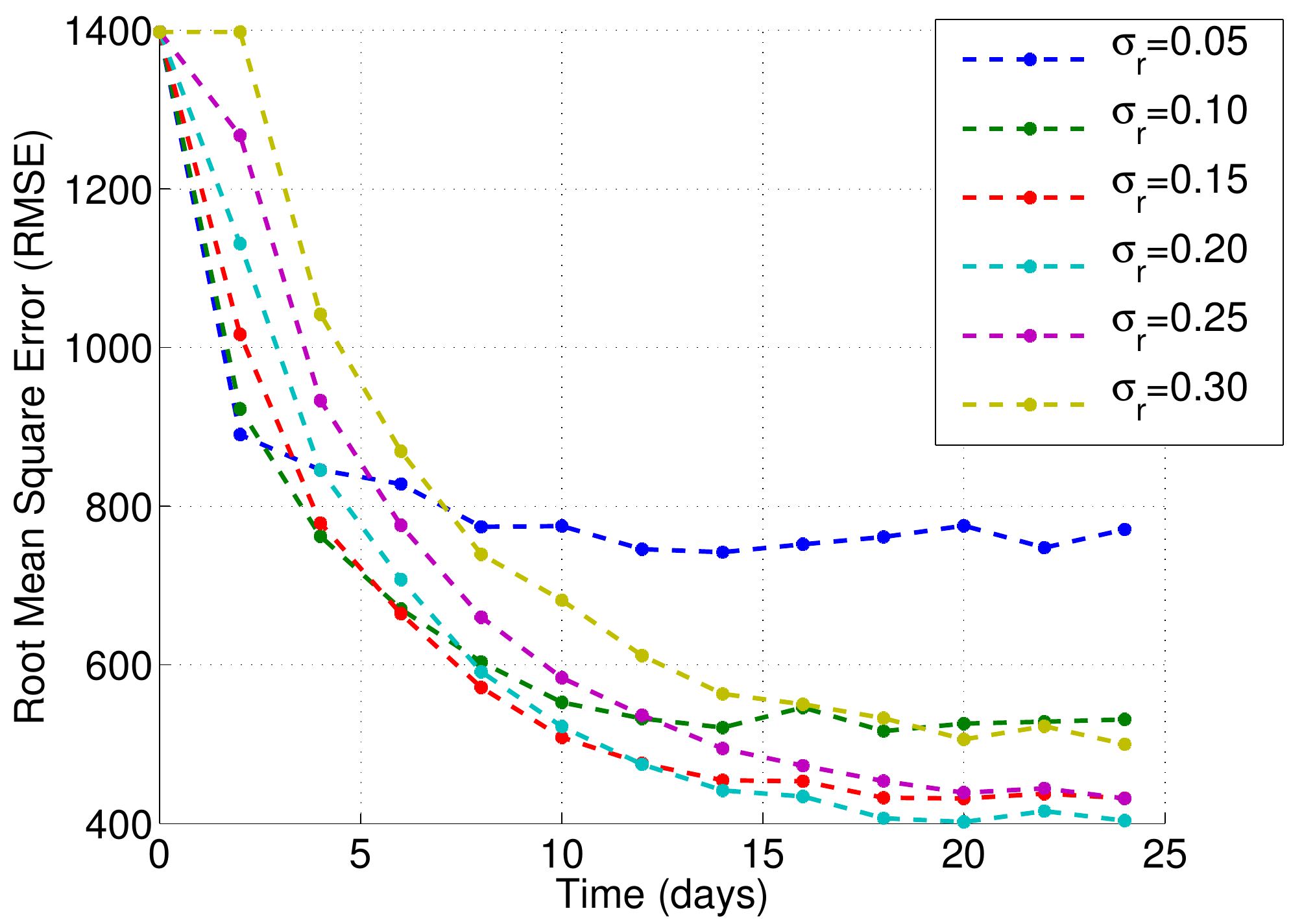}
\caption{Meridional wind component ($m/s$)}
\end{subfigure}%
\begin{subfigure}{0.5\textwidth}
\centering
\includegraphics[width=0.9\textwidth,height=0.75\textwidth]{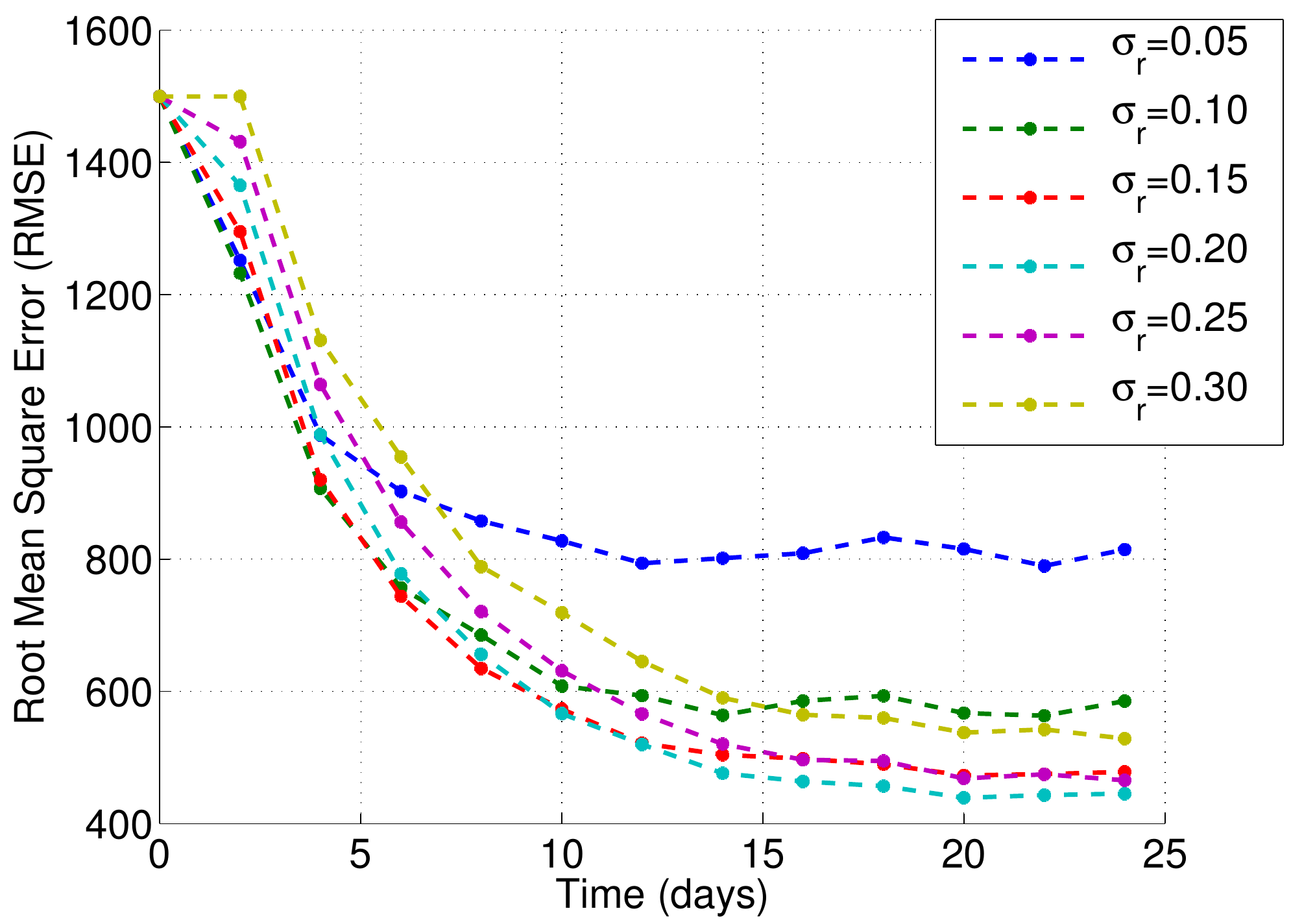}
\caption{Zonal wind component ($m/s$)}
\end{subfigure}

\begin{subfigure}{0.5\textwidth}
\centering
\includegraphics[width=0.9\textwidth,height=0.75\textwidth]{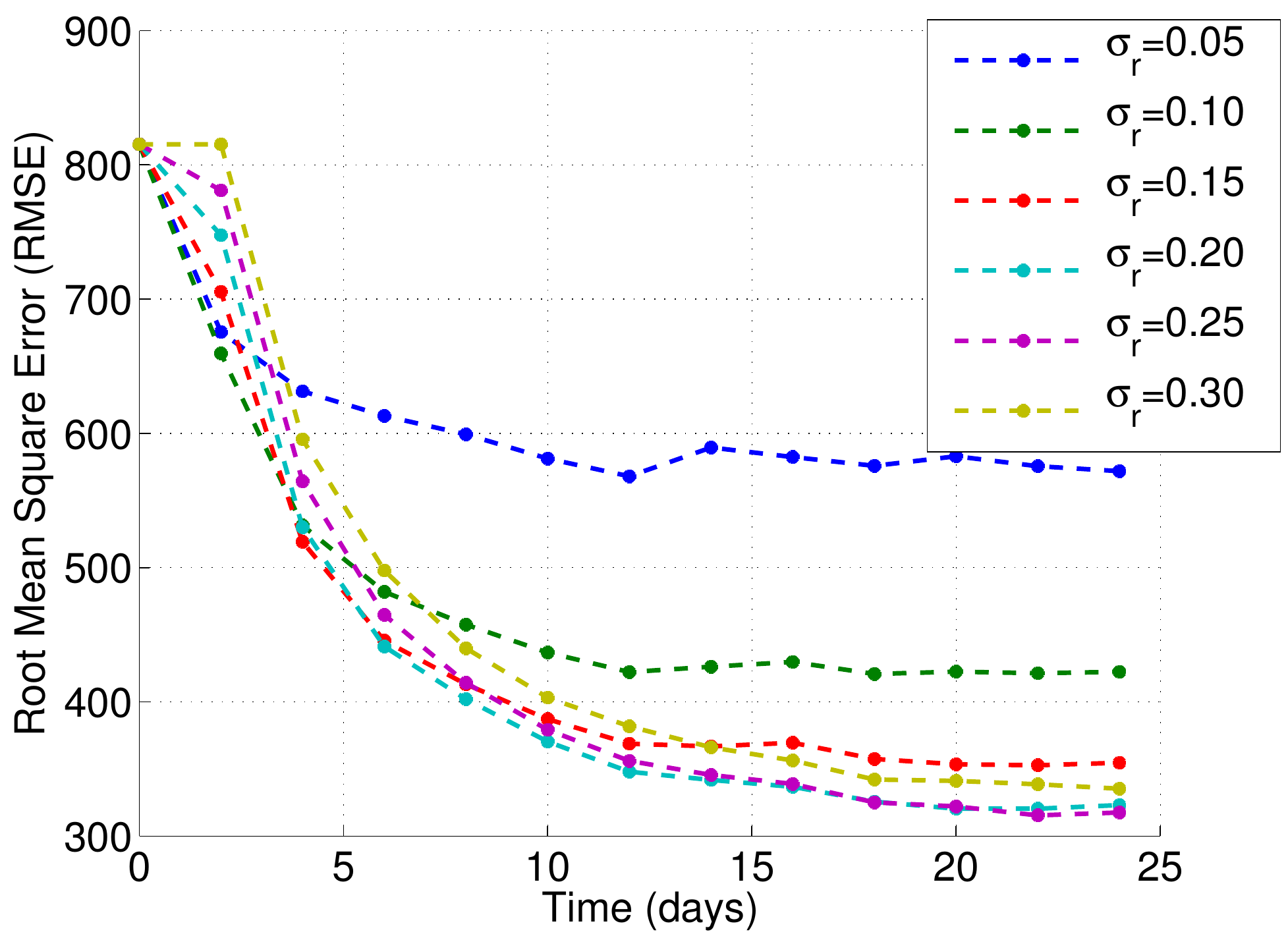}
\caption{Temperature ($K$)}
\end{subfigure}%
\begin{subfigure}{0.5\textwidth}
\centering
\includegraphics[width=0.9\textwidth,height=0.75\textwidth]{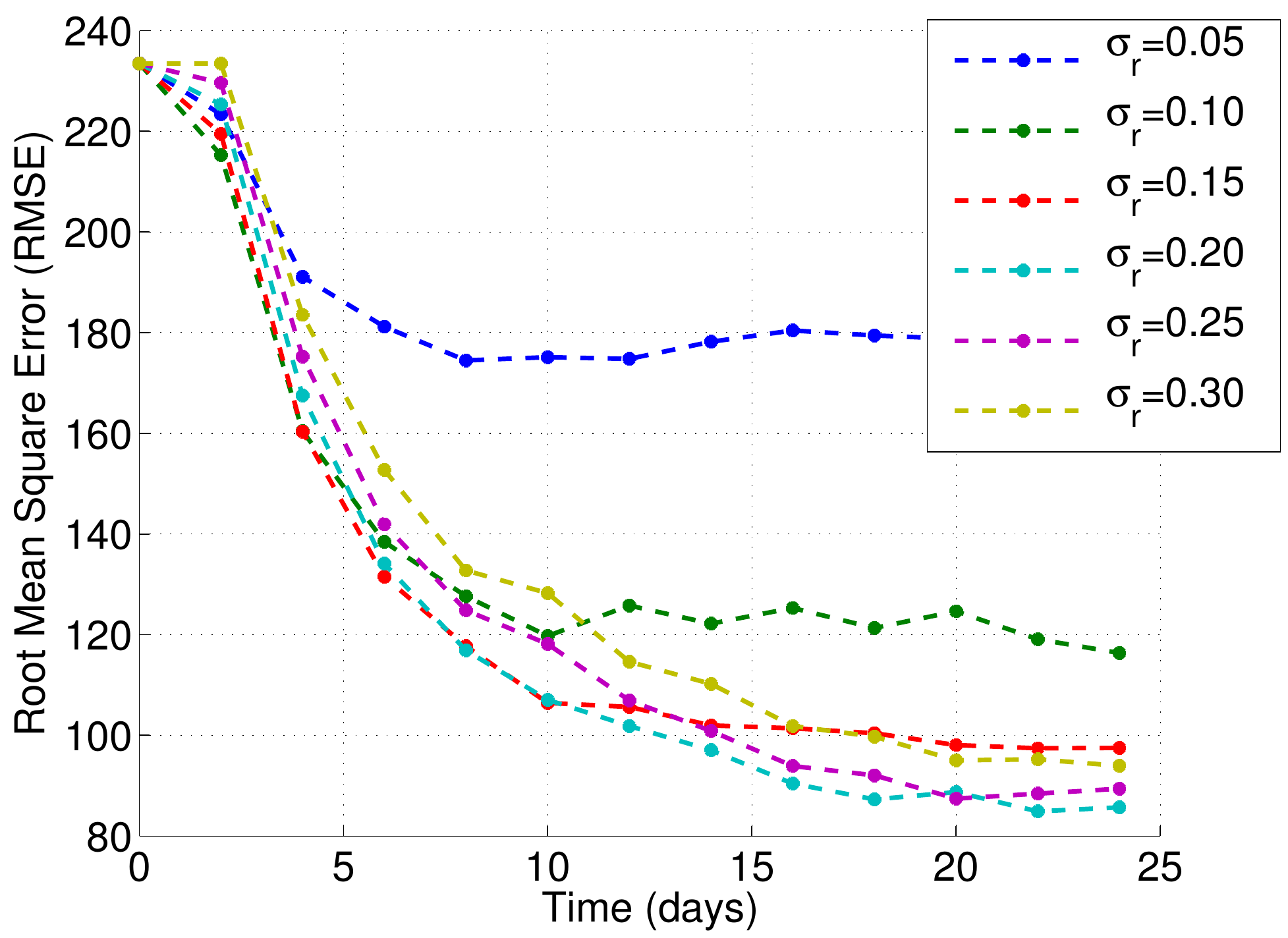}
\caption{Specific humidity ($g/kg$)}
\end{subfigure}
\caption{RMSE for the SPEEDY analyses obtained using different SVD truncation levels based on the $\sigma_r$ values.}
\label{fig:exp-RMSE-different-thresholds}
\end{figure}

\begin{figure}[H]
\centering
\begin{subfigure}{0.5\textwidth}
\centering
\includegraphics[width=0.9\textwidth,height=0.9\textwidth]{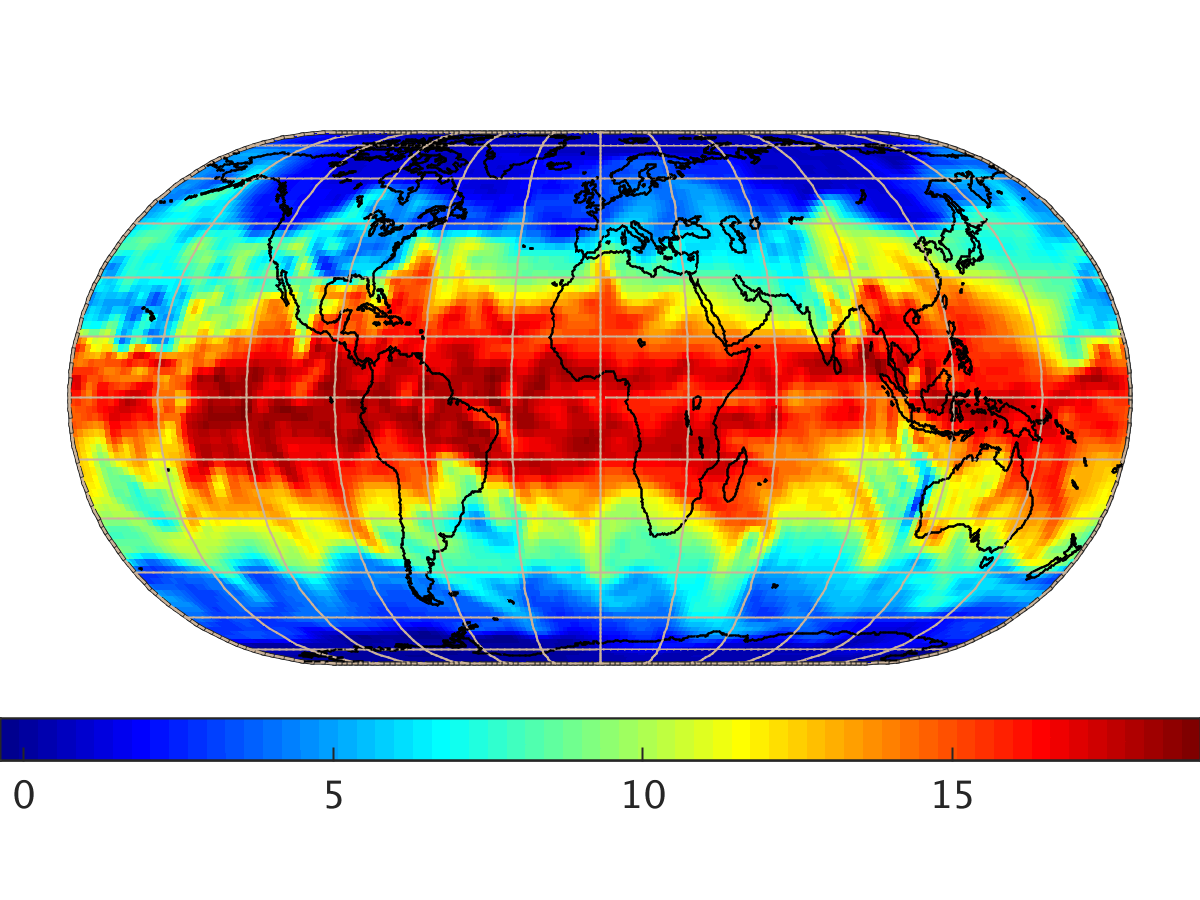}
\caption{Reference}
\end{subfigure}%
\begin{subfigure}{0.5\textwidth}
\centering
\includegraphics[width=0.9\textwidth,height=0.9\textwidth]{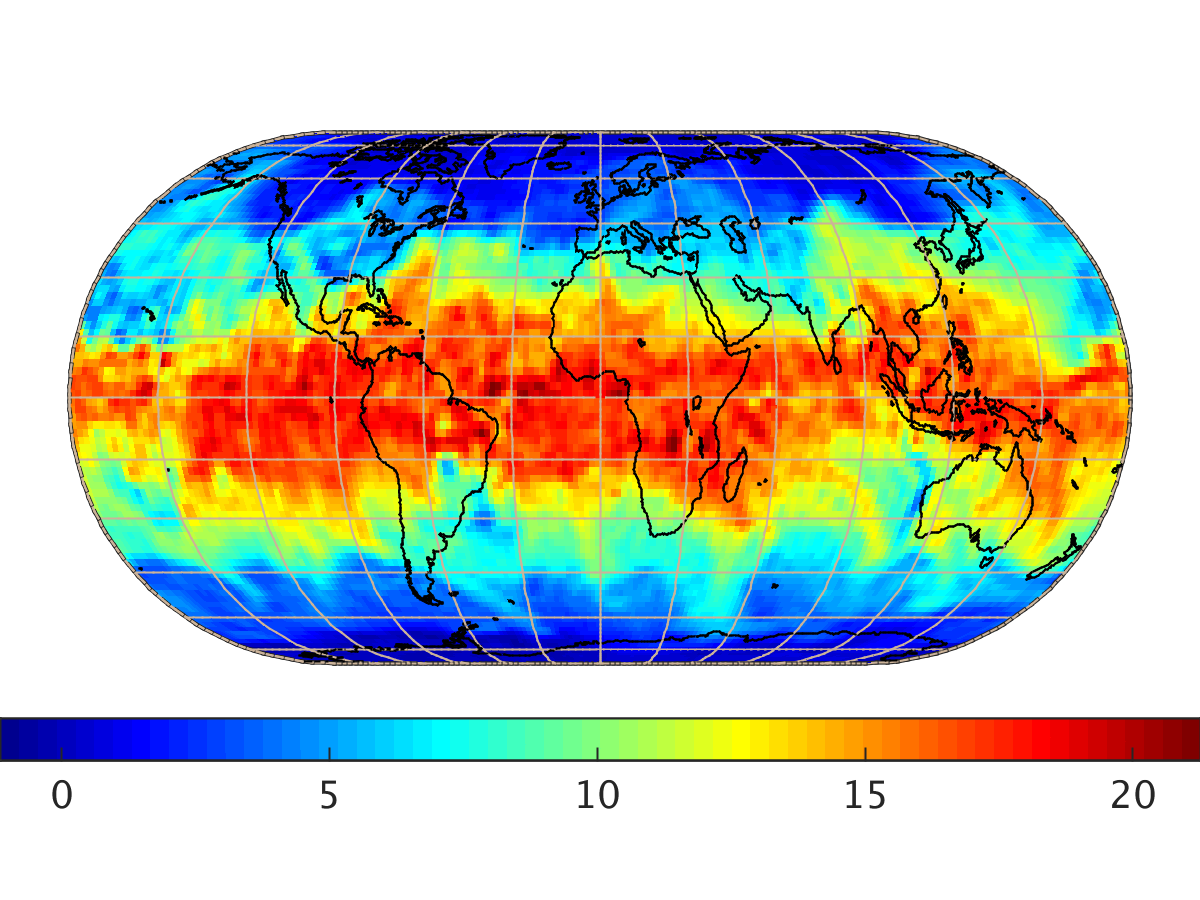}
\caption{$\sigma_r = 0.05$}
\end{subfigure}

\begin{subfigure}{0.5\textwidth}
\centering
\includegraphics[width=0.9\textwidth,height=0.9\textwidth]{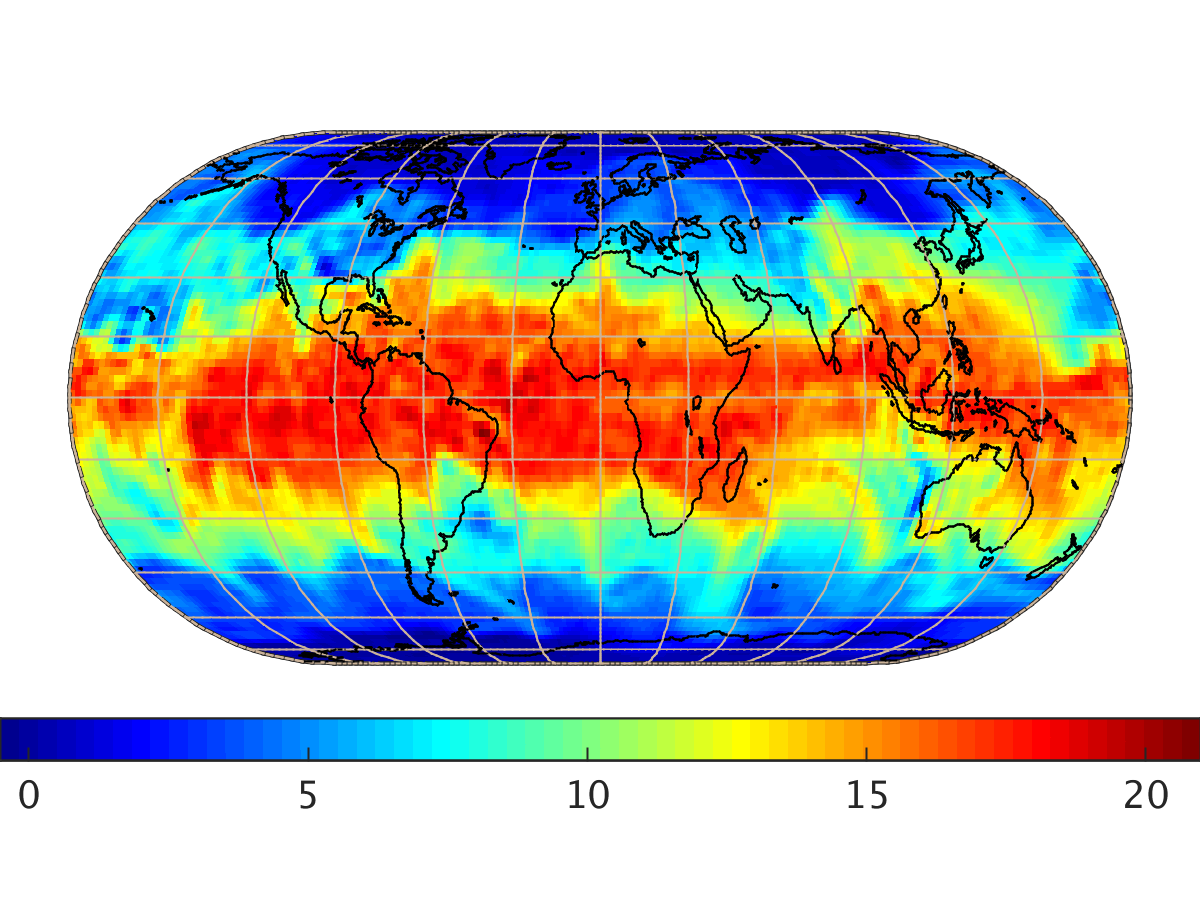}
\caption{$\sigma_r = 0.10$}
\end{subfigure}%
\begin{subfigure}{0.5\textwidth}
\centering
\includegraphics[width=0.9\textwidth,height=0.9\textwidth]{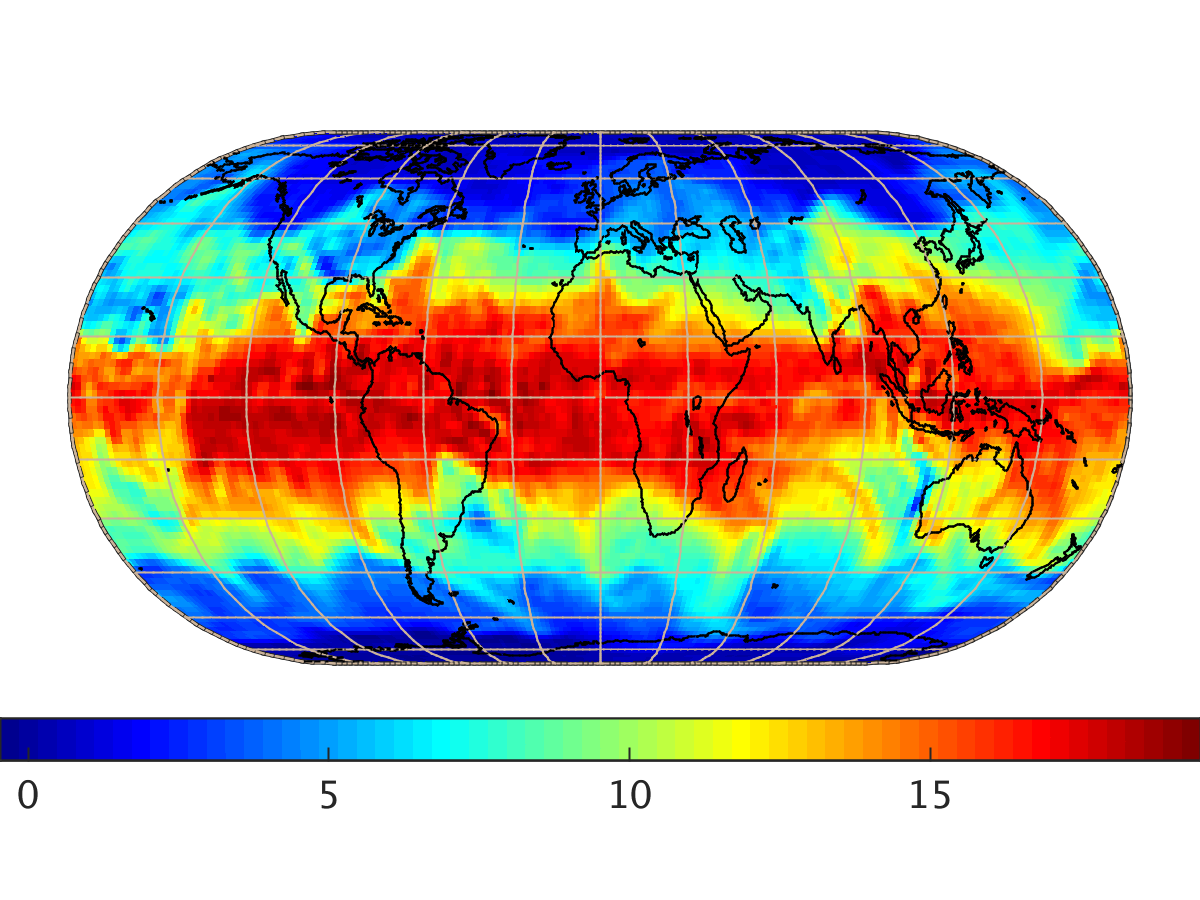}
\caption{$\sigma_r = 0.15$}
\end{subfigure}

\begin{subfigure}{0.5\textwidth}
\centering
\includegraphics[width=0.9\textwidth,height=0.9\textwidth]{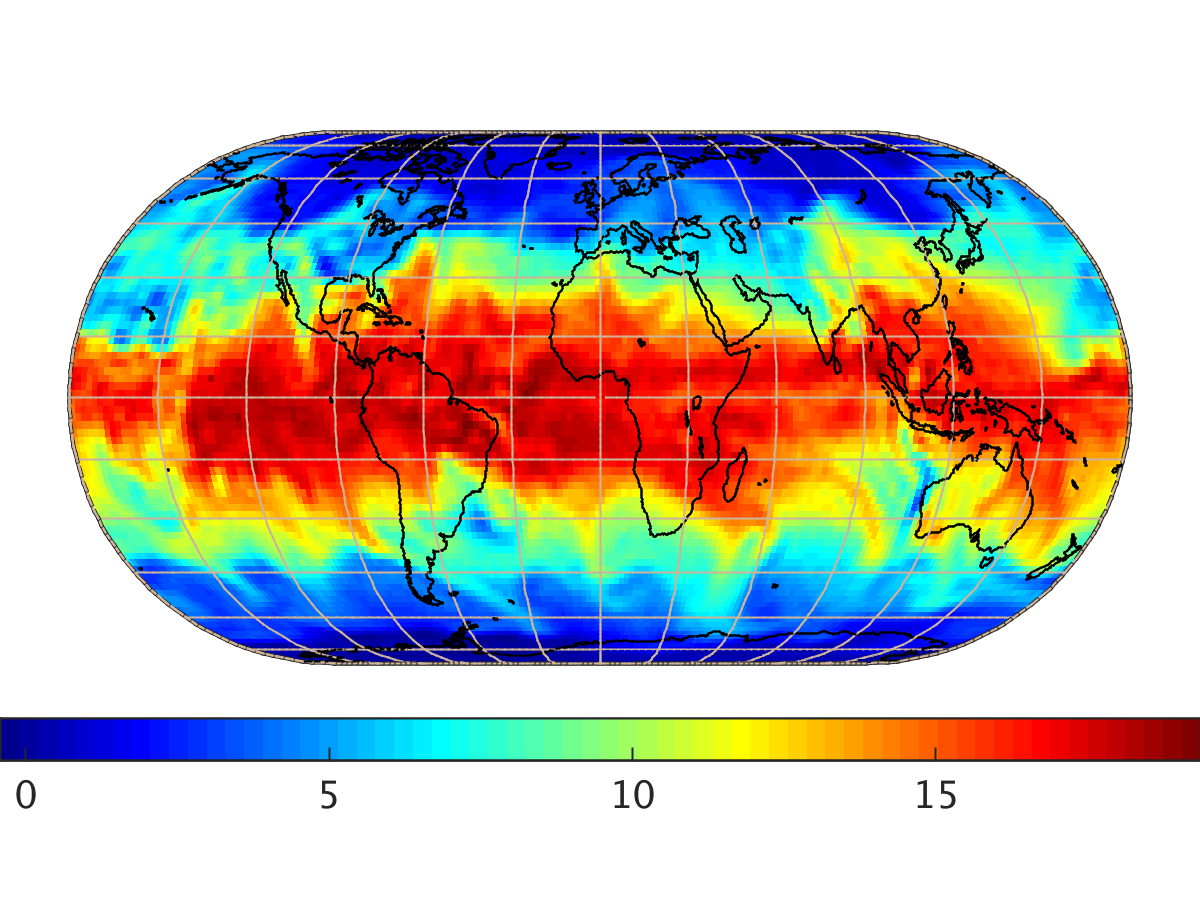}
\caption{$\sigma_r = 0.20$}
\end{subfigure}%
\begin{subfigure}{0.5\textwidth}
\centering
\includegraphics[width=0.9\textwidth,height=0.9\textwidth]{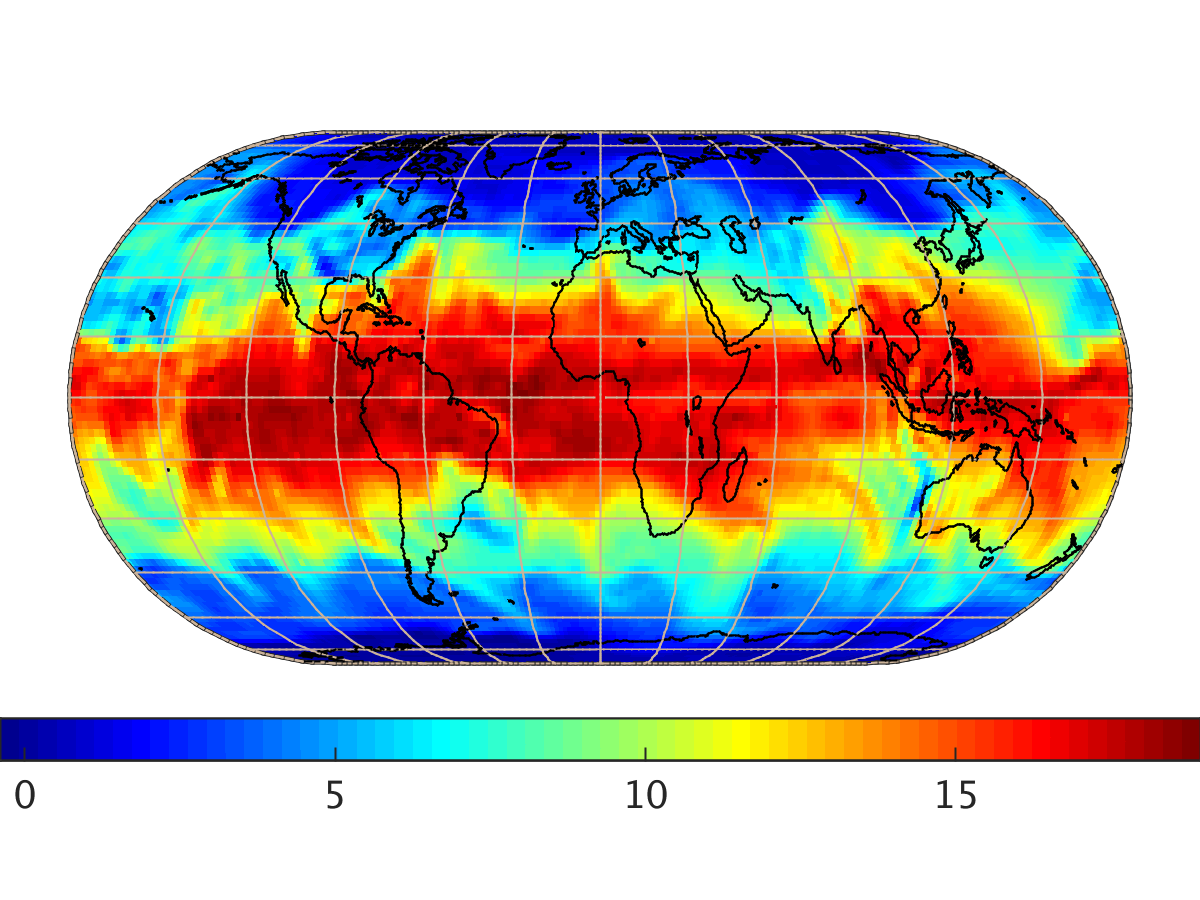}
\caption{$\sigma_r = 0.30$}
\end{subfigure}
\caption{Snapshots at the final assimilation time (day 22) of the EnKF-MC analysis making use of different thresholds $\sigma_r$ for $\ra = 5$ and $p = 4 \%$. }
\label{fig:exp-RMSE-snapshots-thresholds}
\end{figure}
\begin{figure}[H]
\centering
\begin{subfigure}{0.5\textwidth}
\centering
\includegraphics[width=0.9\textwidth,height=0.9\textwidth]{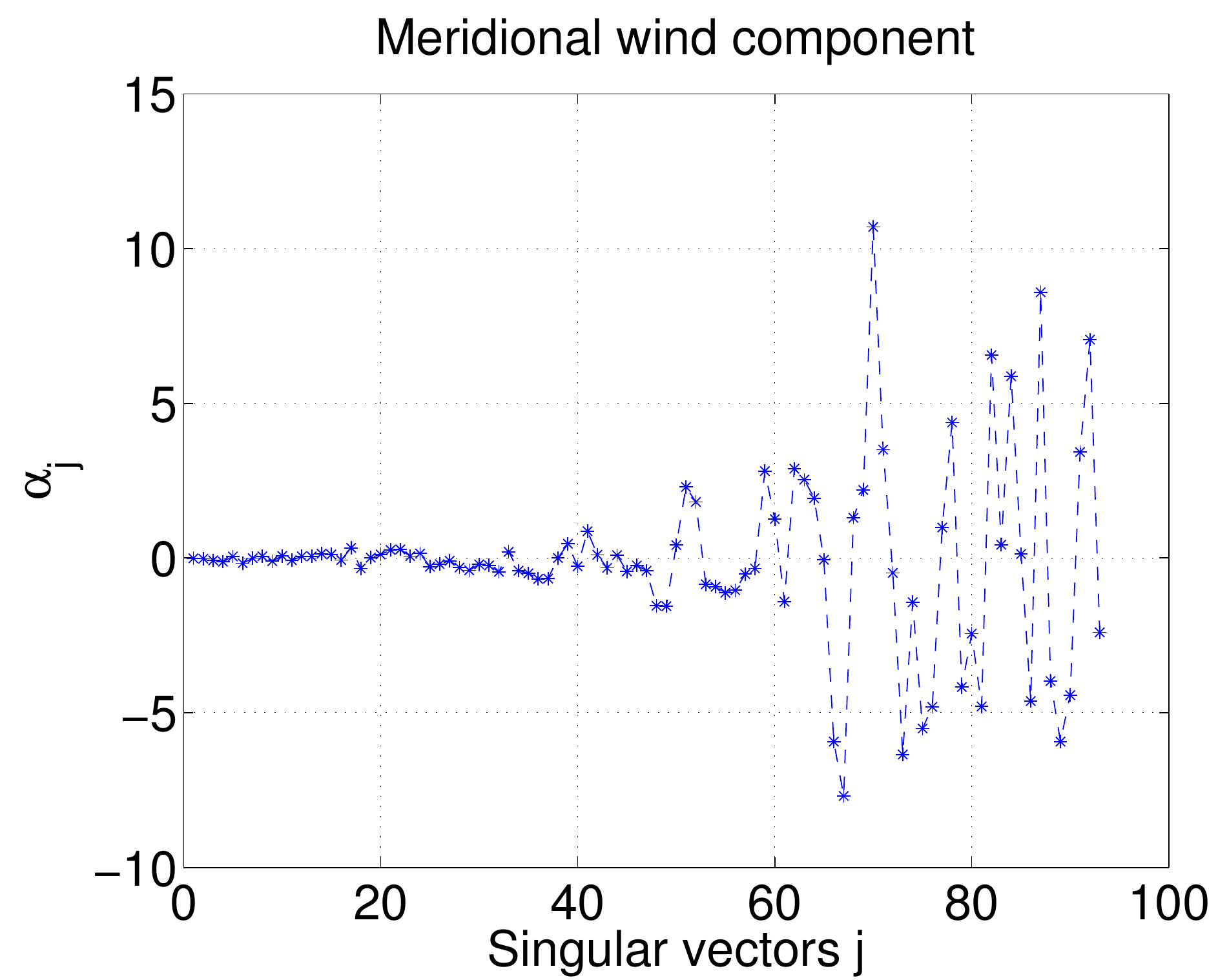}
\caption{Meridional wind component ($m/s$)}
\end{subfigure}%
\begin{subfigure}{0.5\textwidth}
\centering
\includegraphics[width=0.9\textwidth,height=0.9\textwidth]{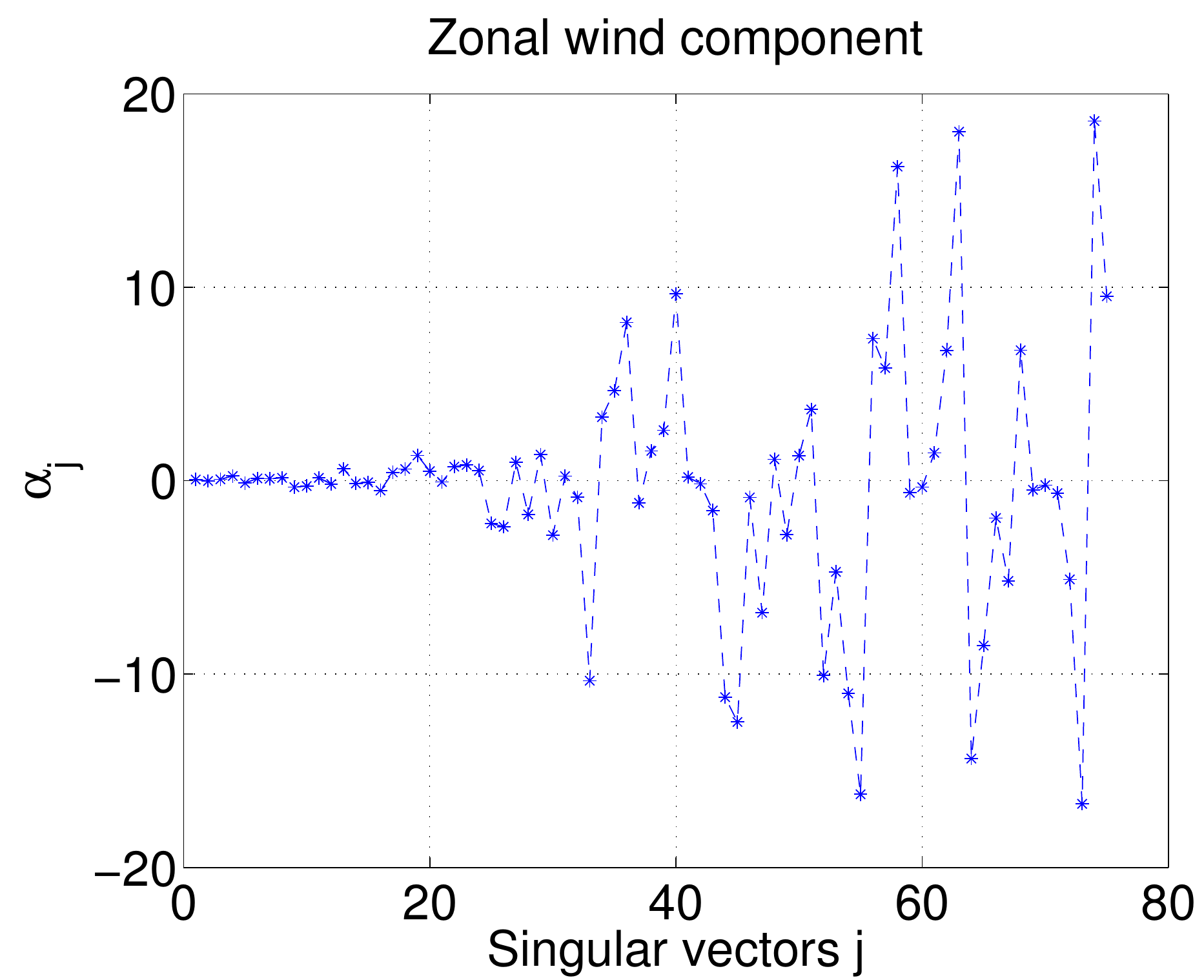}
\caption{Zonal wind component ($m/s$)}
\end{subfigure}

\begin{subfigure}{0.5\textwidth}
\centering
\includegraphics[width=0.9\textwidth,height=0.9\textwidth]{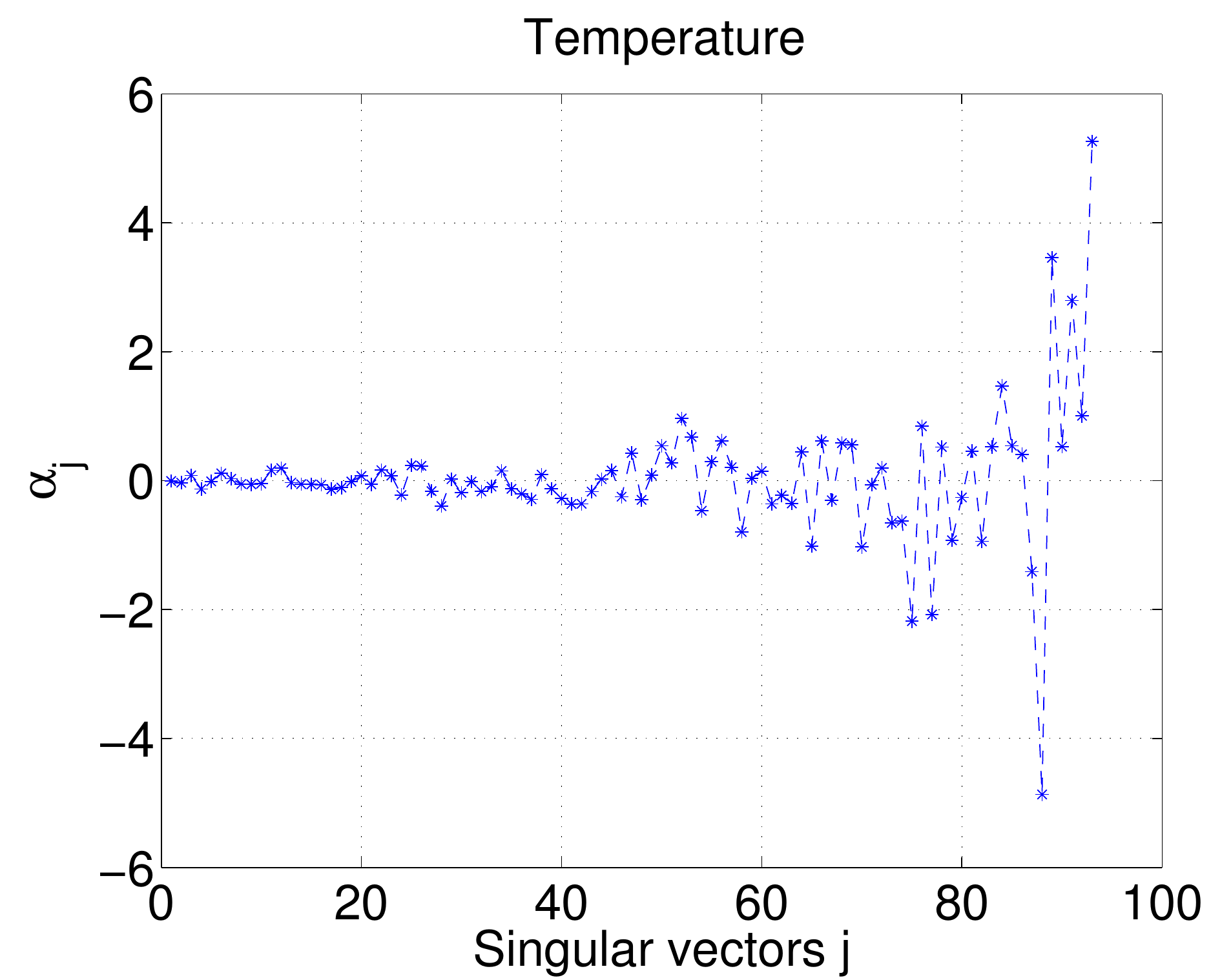}
\caption{Temperature ($K$)}
\end{subfigure}%
\begin{subfigure}{0.5\textwidth}
\centering
\includegraphics[width=0.9\textwidth,height=0.9\textwidth]{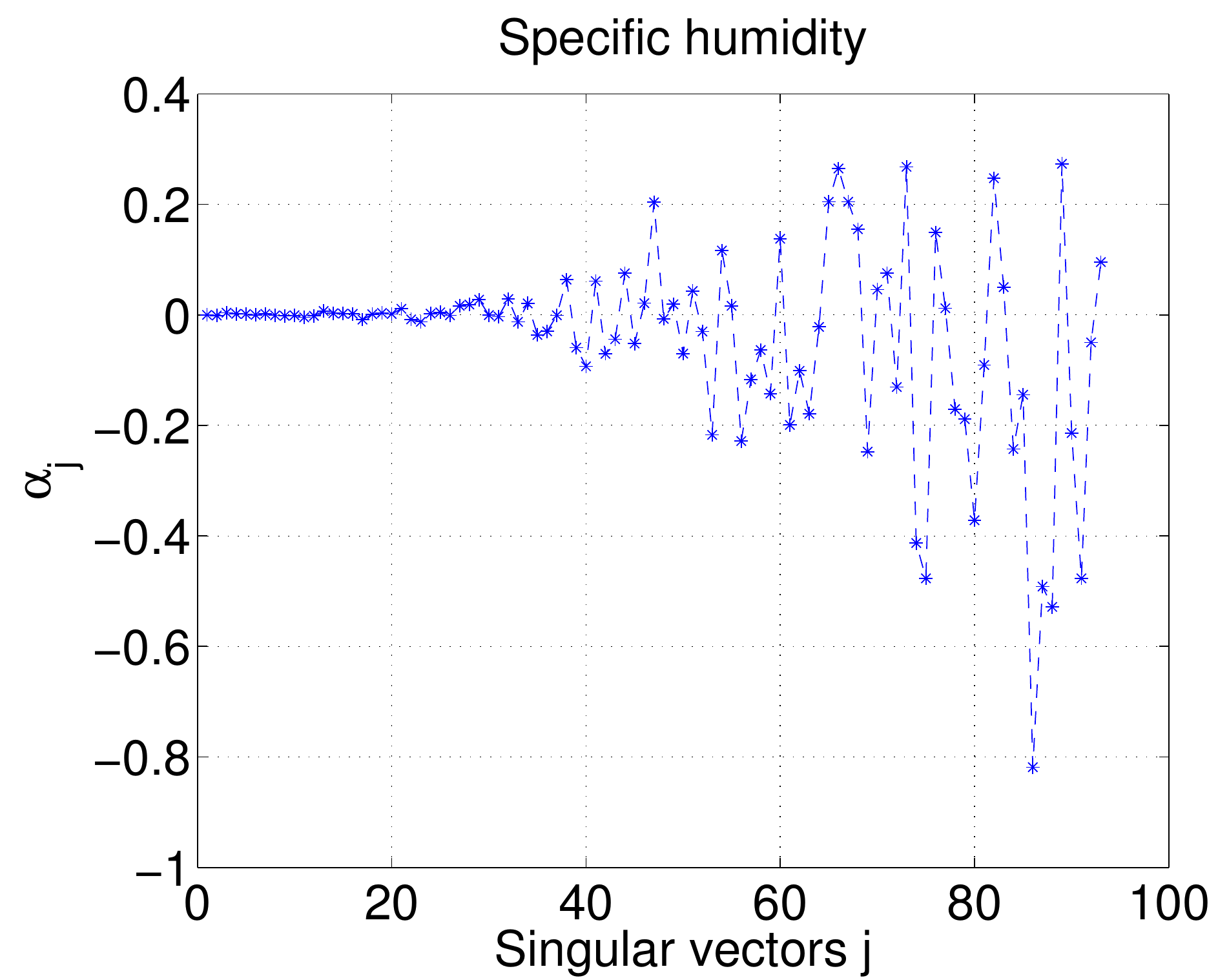}
\caption{Specific humidity ($g/kg$)}
\end{subfigure}
\caption{The effect of $\errR$ on the weights $\alpha_j$ for some model component $i$ of the SPEEDY model when $\ra=5$ and $p=4\%$.}
\label{fig:exp-random-noise-effect}
\end{figure}

\section{Conclusions}
\label{sec:conclusions}

This paper develops an efficient implementation of the ensemble Kalman filter, named EnKF-MC, that is based on a modified Cholesky decomposition to estimate the inverse background covariance matrix. This new approach has several advantages over classical formulations. First, a predefined sparsity structure can be built into the factors of the inverse covariance. This reflects the fact that if two distant model components are uncorrelated then the corresponding entry in the inverse covariance matrix is zero; the only nonzero entries in the Cholesky factors correspond to components of the model that are located in each other's proximity. Therefore, imposing a sparsity structure on the inverse background covariance matrix is a form of covariance localization. Second, the formulation allows for a rigorous theoretical analysis; we prove the convergence of the covariance estimator for a number of ensemble members that is proportional to the logarithm of the number of states of the model therefore, when $\Nens \approx \log \Nstate$, the background error correlations can be well-estimated making use of the modified Cholesky decomposition.

We discuss different implementations of the new EnKF-MC, and asses their computational effort. We show that domain decomposition can be used in order to decrease even more the computational effort of the proposed implementation. Numerical experiments are carried out using the Atmospheric General Circulation Model SPEEDY reveal that the analyses obtained by EnKF-MC are better than those of the LETKF in the root mean square sense when sparse observations are used in the analysis. For dense observation grids the EnKF-MC solutions are improved when the radius of influence increases, while the opposite holds true for LETKF analyses. (We stress the fact that these conclusions are true for our implementation of the basic LETKF; other implementations may incorporate advances that could make the filter perform considerably better). The use of modified Cholesky decomposition can mitigate the impact of spurious correlation during the assimilation of observations.

\section*{Acknowledgements}

This work was supported in part by awards NSF CCF--1218454,
AFOSR FA9550--12--1--0293--DEF, and by the Computational Science Laboratory at Virginia Tech.


\newcommand{\etalchar}[1]{$^{#1}$}


\end{document}